\newtheorem{thm}{Theorem}[section]
\newtheorem{cor}[thm]{Corollary}
\newtheorem{lemma}[thm]{Lemma}
\newtheorem{prop}[thm]{Proposition}
\theoremstyle{definition}
\newtheorem{defn}[thm]{Definition}
\theoremstyle{remark}
\newtheorem{remark}[thm]{Remark}
\renewcommand{\a}{\alpha}
\renewcommand{\b}{\beta}
\newcommand{\e}{\epsilon}
\newcommand{\PB}{\left\{\cdot\,,\cdot\right\}}
\newcommand{\Pb}[1]{\left\{\cdot\,,#1\right\}}
\newcommand{\pb}[1]{\left\{#1\right\}}
\newcommand{\lb}[1]{\[#1\]}
\renewcommand{\[}{\left[}
\renewcommand{\]}{\right]}
\newcommand{\cE}{\mathcal E}
\newcommand{\cL}{\mathcal L}
\newcommand{\cP}{\mathcal P}
\newcommand{\cO}{\mathcal O}
\newcommand{\cR}{\mathcal R}
\newcommand{\cS}{\mathcal S}
\newcommand{\fS}{\mathfrak S}
\newcommand{\bbC}{\mathbb C}
\newcommand{\C}{\mathbb C}
\newcommand{\bbI}{\mathbb I}
\newcommand{\bbP}{\mathbb P}
\newcommand{\bbZ}{\mathbb Z}
\newcommand{\G}{\mathbf G}
\newcommand{\Z}{\mathbb Z}
\newcommand{\br}{B}
\newcommand{\Ker}{\mathop{\rm Ker}\nolimits}
\newcommand{\diag}{\mathop{\rm diag}\nolimits}
\newcommand{\Res}{\mathop{\rm Res}\nolimits}
\newcommand{\Tr}{\mathop{\rm Tr}\nolimits}
\renewcommand{\Im}{\mathop{\rm Im}\nolimits}
\newcommand{\leqs}{\leqslant}
\newcommand{\geqs}{\geqslant}
\newcommand{\diff}{{\rm d }}
\newcommand{\Cas}{{\mathcal Cas}}
\newcommand{\Ad}{\mathop{\rm Ad}}
\newcommand{\eig}{\mathop{\rm eig}}
\newcommand{\ir}{\mathop{\rm ir}}
\newcommand{\Mat}{{\mathop{\rm Mat}}}
\newcommand{\id}{\mathop{\rm id}}
\newcommand{\Pic}{\mathop{\rm Pic}}
\newcommand{\Gal}{\mathop{\rm Gal}}
\newcommand{\Coker}{\mathop{\rm Coker}}
\newcommand{\Div}{\mathop{\rm Div}}
\newcommand{\Spec}{\mathop{\rm Spec}}
\newcommand{\sm}{\mathop{\rm sm}}
\newcommand{\spl}{\mathop{\rm spl}}
\renewcommand{\geq}{\geqs}
\renewcommand{\leq}{\leqs}
\renewcommand{\div}{\mathop{\rm div}}
\newcommand{\et}{{\mathop{\rm et}}}
\newcommand{\Hom}{{\mathop{\rm Hom}}}
\newcommand{\ue}{\underline{e}}
\newcommand{\uz}{\underline{0}}
\newcommand{\uo}{\underline{\omega}}
\newcommand{\modp}[1]{[#1]}
\newif\ifprivate
 \numberwithin{equation}{section}
\def\???{\ifprivate {\bf {???}} \marginpar{{\Huge {\bf ?}}}\else \fi}
\numberwithin{equation}{section}
\begin{document}  


\parskip 4pt
\baselineskip 16pt


\title[ACI systems and curves with automorphisms]
{Algebraic integrable systems related to spectral curves with automorphisms}

\author[Rei Inoue]{Rei Inoue}
\address{Rei Inoue, Department of Mathematics and Informatics,
Faculty of Science, Chiba University, Chiba 263-8522, Japan}
\email{reiiy@math.s.chiba-u.ac.jp}

\author[Pol Vanhaecke]{Pol Vanhaecke}
\address{Pol Vanhaecke, Laboratoire de Math\'ematiques et Applications, 
UMR 7348 du CNRS, Universit\'e de Poitiers, 
 86962 Futuroscope Chasseneuil
Cedex, France}
\email{pol.vanhaecke@math.univ-poitiers.fr}

\author[Takao Yamazaki]{Takao Yamazaki} 
\address{Takao Yamazaki, Mathematical Institute, Tohoku University,
  Aoba, Sendai 980-8578, Japan}
\email{ytakao@math.tohoku.ac.jp}

\thanks{The first author is supported by JSPS KAKENHI Grant (22740111),
the third author is supported by JSPS KAKENHI Grant (22684001, 24654001).}

\subjclass[2000]{53D17, 37J35, 14H70, 14H40}

\keywords{Integrable systems; Jacobians; Algebraic integrability; Curves with automorphisms}

\begin{abstract}

We apply a reduction to the Beauville systems to obtain a family of new algebraic completely integrable systems,
related to curves with a cyclic automorphism.

\end{abstract}

\maketitle


\section{Introduction}
%
Algebraic completely integrable systems (aci systems) occupy a distinguished place among the class of (complex)
integrable systems \cite{adlermoerbekevanhaecke2004,V}. The first example is due to Euler, who shows that the
spinning top which now bears his name can be integrated in terms of elliptic functions. Another well-known
classical example is Kowalevski's top, which is the first example of an integrable system which is integrated in
terms of hyperelliptic theta functions (of genus two). The discovery in the seventies that the KdV equation can
also be integrated in terms of hyperelliptic theta functions (of any genus), revived the interest in integrable
systems. Upon revisiting the classical examples, and several newly constructed ones, Adler and van Moerbeke coined
the term \emph{algebraic complete integrability}, unveiling the (algebro-) geometrical origin and meaning of their
integrability in terms of theta functions: the generic fiber of the momentum map (the generic iso-level set of the
constants of motion) is an affine part of an Abelian variety and the integrable vector fields are translation
invariant on these Abelian varieties.  This new point of view has been the starting point for a rich interaction
between algebraic geometry and integrability.  Many new aci systems have been discovered since then
\cite{avm_so4,adlermoerbekevanhaecke2004,hitchin,B,Mumford}.  We shall recall Beauville's system \cite{B} in \S
\ref{sect:aci-beauville}.  In this system, the generic fiber is the complement of the theta divisor in the Jacobian
variety of a (compact) Riemann surface called the \emph{spectral curve}.  In the present paper, we restrict his
system to the subspace for which spectral curves have certain automorphism, and apply reduction to obtain a new
integrable system.  Below we describe our system in more detail.

Let $p$ be a prime number and set $d=pd'$ for some integer $d'>0$.
Let
$M_{p, d}^{\Delta_{\uo}}$
to be the space of $p \times p$ matrices
whose $(i, j)$-entry is a polynomial of the form
$\ell_{ij}(x)=\sum_{k=0}^d \ell_{ij}^k x^k \in \C[x]$ 
such that $\ell_{ij}^k = 0$ unless $i-j \equiv k \pmod p$.
For $L(x) \in M_{p, d}^{\Delta_{\uo}}$,
its characteristic polynomial $\det(y \bbI_p - L(x))$
can be written as $Q(x^p, y)$
for some $Q(x, y) \in \C[x, y]$.
The assignment $L(x) \mapsto Q(x, y)$ defines
a canonical map
$$ \chi_{\uo} : M_{p, d}^{\Delta_{\uo}} \to \C[x, y]. $$
The space $M_{p, d}^{\Delta_{\uo}}$
is stable under the conjugate action
of the centralizer $G_{\Delta_{\uo}}$ of 
the class $\Delta_{\uo}$ of
$\diag(1, e^{2 \pi i/p}, e^{4 \pi i/p}, \dots, e^{2(p-1) \pi i/p})$ 
in $PGL_p(\C)$
(which is an extension of $\Z/p\Z$ by $(\C^*)^{p-1}$;
see Lemma \ref{lem:gdelta} (2) for details),
and $G_{\Delta_{\uo}}$ acts freely on 
$M_{p, d, \ir}^{\Delta_{\uo}} := \{ L(x) \in M_{p, d}^{\Delta_{\uo}} ~|~
\chi_{\uo}(L(x))$ is irreducible$\}$.
Moreover, the map $\chi_{\uo}$ is
equivariant under this action,
i.e.
we have $\chi_{\uo}(g L(x) g^{-1}) = \chi_{\uo}(L(x))$
for any $L(x) \in  M_{p, d}^{\Delta_{\uo}}$ 
and $g \in G_{\Delta_{\uo}}$.
Hence $\chi_{\uo}$ induces a map
$$ \tilde{\chi}_{\uo} : M_{p, d, \ir}^{\Delta_{\uo}}/G_{\Delta_{\uo}} \to \C[x, y]. $$ In general, the fiber
$\tilde{\chi}_{\uo}^{-1}(Q)$ of $\tilde{\chi}_{\uo}$ over $Q \in \C[x, y]$ is not connected.  We will investigate
the structure of the set of connected components.  Using Beauville's result mentioned above, for generic $Q(x, y)
\in \C[x, y]$ of the form $Q(x, y) = y^{p} + s_1(x)y^{p-1} + \dots + s_p(x)$ with $s_i(x) \in \C[x], ~\deg s_i(x)
\leq d'i$, each connected component of $\tilde{\chi}_{\uo}^{-1}(Q)$ is seen to be isomorphic to an affine open
subset of the Jacobian variety of the Riemann surface defined by the equation $Q(x, y)=0$ (see Theorem
\ref{thm:each-conn-comp-jac}).  
We then combine the methods of 
Poisson-Dirac reduction and Poisson reduction
to construct (several) Poisson structures
$\PB$ on $M_{p, d, \ir}^{\Delta_{\uo}}/G_{\Delta_{\uo}}$.
%
%
%
As Hamiltonian functions, we
take (linear combinations of) regular functions which send $[L(x)] \in M_{p, d,
  \ir}^{\Delta_{\uo}}/G_{\Delta_{\uo}}$ to the coefficient of $x^i y^j$ in $\tilde{\chi}_{\ue}([L(x)])$ with $i, j
\in \Z_{\geq 0}$.  Their Hamiltonian vector fields are shown to be translation invariant.  Therefore we arrive at
our main result (see Theorem \ref{thm:main}):
\begin{equation}\label{eq:main-result}
\text{The triple $(M^{\Delta_{\uo}}_{p, d, \ir}/G_{\Delta_{\uo}}, \PB, \tilde{\chi}_{\uo})$
is an aci system.}
\end{equation}
(See Definition \ref{def:aci} below for a precise definition of an aci system.)  When $p=2$, our system is very
similar to (but not precisely the same as) one of the two hyperelliptic Prym system introduced in \cite{FV}.

Actually, we shall construct a family of aci systems
parameterized by $\ue \in \cE$ 
where $\cE$ is a certain subset of $(\Z/p\Z)^p$
(see \S \ref{sect:p-tor}).
The above system is obtained as a
special case $\ue = \uo := (0, 1, \dots, p-1) \in \cE$.
Suppose we are given general $\ue = (e_1, \dots, e_p) \in \cE$.
Let $G_{\Delta_{\ue}}$ be the centralizer of
the class $\Delta_{\ue}$ of
$\diag(e^{2 \pi e_1 i/p}, \dots, e^{2 \pi e_p i/p})$
in $PGL_p(\C)$.
We shall construct a certain subspace
$M_{p, d}^{\Delta_{\ue}}$ 
of the space of $p \times p$ matrices
whose entries are polynomial of degree $\leq d$
(see \S \ref{sec:fixed_points}),
which is stable under the action of $G_{\Delta_{\ue}}$ by conjugation.
Then everything explained in the previous paragraph
will be carried out for general $\ue$
and we will prove \eqref{eq:main-result}
with $\uo$ is replaced by $\ue$.
However, for general $\ue$
the description of $M_{p, d}^{\Delta_{\ue}}$ 
and $G_{\Delta_{\ue}}$ will be more involved
(see Lemma \ref{lem:mdelta}).

The structure of the paper is as follows. We study in Section 2 the relation between the Jacobians of two curves
which are linked by a ramified cyclic covering of prime order. In Section 3 we introduce a space of polynomial
matrices of size $p$ and study its automorphisms of order $p$, with particular attention to the fixed point set of
such an automorphism. In section 4, both the space and its fixed point set are related, to the corresponding
spectral curves, upon using the momentum map and the results of Section 2. We recall Beauville's result and we use
it to describe the fibers of the aci systems under construction. We deal with the Hamiltonian structure of the
space of polynomial matrices, its fixed point sets and their quotients (by the adjoint action) in Section 5. In
particular we obtain a multi-Hamiltonian structure of our newly constructed phase spaces. 
The algebraic integrability of our system
is proven in Section 6. In the final Section 7 we use \'eale cohomology
to reduce one of the conditions in the main theorem of Section 2.

\section{Fixed point sets for automorphisms on Jacobians}
\label{sect2}

For a smooth projective irreducible curve $C$ over $\bbC$, we write $J(C)$ for the Jacobian variety of $C$.  An
automorphism of $C$ leads to an automorphism of $J(C)$. In the present section, we study the fixed point set of the
latter automorphism, in particular we determine the structure and the number of its connected components.

\begin{thm}\label{thm:conn-comp}
Let $C$ and $C'$ be smooth projective irreducible curves over~$\bbC$, and let $f: C \to C'$ be a finite morphism.
Suppose that the corresponding extension $\bbC(C)/\bbC(C')$ of function fields is a Galois extension of prime
degree~$p$.  We denote by $\br\subset C'$ the set of branch points of $f$, and let $N := |\br|$.  Let
$T:=\Gal(\bbC(C)/\bbC(C'))$ be the Galois group of $\bbC(C)/\bbC(C')$.  We suppose that the following two
conditions are satisfied:
\begin{enumerate}
  \item The pull-back $f^* : J(C') \to J(C)$ is injective;
  \item $N>0$.
\end{enumerate}
Then the cokernel of
$$f^* : J(C') \to J(C)^T :=
\{ a \in J(C) ~|~ \tau^*(a)=a ~\text{for all}~ \tau \in T \}
$$ 
is isomorphic to $(\Z/p\Z)^{N-2}$.
\end{thm}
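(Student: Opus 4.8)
The plan is to reinterpret the cokernel as a Tate cohomology group and then to compute the latter. Write $\sigma$ for a generator of $T\cong\Z/p\Z$, let $N_T=\sum_{i=0}^{p-1}\sigma^i\in\Z[T]$ be the norm element, and let $\mathrm{Nm}\colon J(C)\to J(C')$ be the norm homomorphism. Since $\mathrm{Nm}\circ f^*=[p]$ on $J(C')$ and multiplication by $p$ is surjective on an abelian variety, $\mathrm{Nm}$ is surjective; as $f^*\circ\mathrm{Nm}=N_T$ on $J(C)$, this gives $f^*(J(C'))=\mathrm{Im}(N_T)=N_T\,J(C)$. Hence
\begin{equation*}
\mathrm{Coker}\bigl(f^*\colon J(C')\to J(C)^T\bigr)=J(C)^T/N_T\,J(C)=\hat H^0(T,J(C)),
\end{equation*}
so it suffices to compute $\hat H^0(T,J(C))$. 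I would then pass to the lattice via the complex uniformization $0\to\Lambda\to W\to J(C)\to 0$, a short exact sequence of $T$-modules with $\Lambda=H_1(C,\Z)$ and $W=H^0(C,\Omega^1_C)^*$ a complex vector space. As $|T|=p$ is invertible on $W$, the module $W$ is cohomologically trivial, so the long exact sequence gives $\hat H^i(T,J(C))\cong\hat H^{i+1}(T,\Lambda)$ for all $i$; in particular $\hat H^0(T,J(C))\cong\hat H^1(T,\Lambda)$ and $\hat H^{-1}(T,J(C))\cong\hat H^0(T,\Lambda)$. Both groups are finite and elementary abelian $p$-groups, so the task reduces to determining the order of $\hat H^1(T,\Lambda)$ and showing that the neighbouring group $\hat H^0(T,\Lambda)$ vanishes.

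For the order I would use the Herbrand quotient $q(\Lambda)=|\hat H^0(T,\Lambda)|/|\hat H^1(T,\Lambda)|$, which depends only on $\Lambda\otimes\mathbb{Q}$ as a $\mathbb{Q}[T]$-module. Decomposing $\mathbb{Q}[T]\cong\mathbb{Q}\times\mathbb{Q}(\zeta_p)$, the trivial summand contributes a factor $p$ and each copy of $\mathbb{Q}(\zeta_p)$ a factor $p^{-1}$ to $q$. The trivial part of $\Lambda\otimes\mathbb{Q}$ is $H_1(C,\mathbb{Q})^T\cong H_1(C',\mathbb{Q})$, of dimension $2g'$ with $g'$ the genus of $C'$; combining this with the Riemann--Hurwitz relation $2g-2=p(2g'-2)+N(p-1)$ (the cover is totally ramified over each of the $N$ branch points) yields multiplicity $2g'+N-2$ for $\mathbb{Q}(\zeta_p)$, so that $q(\Lambda)=p^{2g'-(2g'+N-2)}=p^{2-N}$. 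Consequently $|\hat H^1(T,\Lambda)|=p^{N-2}\,|\hat H^0(T,\Lambda)|$.

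The heart of the argument, and the step where both hypotheses enter, is to prove $\hat H^0(T,\Lambda)\cong\hat H^{-1}(T,J(C))=0$. By periodicity $\hat H^{-1}(T,J(C))=\ker\bigl(N_T\colon J(C)\to J(C)\bigr)/(\sigma-1)J(C)$. Hypothesis (1), the injectivity of $f^*$, lets me replace $\ker N_T=\ker(f^*\circ\mathrm{Nm})$ by $\ker\mathrm{Nm}$, so the claim becomes $\ker\mathrm{Nm}=(\sigma-1)J(C)$. The inclusion $(\sigma-1)J(C)\subseteq\ker\mathrm{Nm}$ is immediate from the $T$-invariance of $\mathrm{Nm}$, and both sides have dimension $g-g'$ (the rank of $\sigma-1$ on $\Lambda\otimes\mathbb{Q}$ is $2(g-g')$, and $\mathrm{Nm}$ is surjective). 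Since $(\sigma-1)J(C)$ is connected, equality follows once $\ker\mathrm{Nm}$ is shown to be connected; this is precisely the classical fact that the norm map of a \emph{ramified} cyclic cover has connected kernel, and it is here that hypothesis (2), $N>0$, is indispensable (for an unramified cover $\ker\mathrm{Nm}$ acquires $\pi_0\cong\Z/p\Z$). I expect this connectedness to be the \textbf{main obstacle}, and I would either invoke the standard theory of Prym varieties of ramified covers or argue directly that $\ker\mathrm{Nm}$ coincides with the connected group $(\sigma-1)J(C)$ after the finite checks above.

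Granting $\hat H^0(T,\Lambda)=0$, the Herbrand computation forces $\hat H^1(T,\Lambda)\cong(\Z/p\Z)^{N-2}$, and therefore $\mathrm{Coker}(f^*)\cong(\Z/p\Z)^{N-2}$, as claimed. As a consistency check, the local monodromies around the $N$ branch points multiply to the identity in $T$ while each of them generates $T$, so $N\neq1$; combined with $N>0$ this gives $N\geq2$, and the exponent $N-2$ is non-negative.
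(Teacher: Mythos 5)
Your proposal is correct in substance and takes a genuinely different route from the paper. The paper never leaves the algebraic side: it compares the exact sequence $0 \to \bbC(C')^*/\bbC^* \to \Div(C') \to \Pic(C') \to 0$ with its $T$-invariant analogue for $C$, feeds in Galois cohomology of function fields (Lemma \ref{lem:gal-1}; Hilbert 90 and Tsen's theorem for Lemma \ref{lem:gal-2}, whence Lemma \ref{lem:gal-3}), computes $\Coker(f^*:\Div(C')\to\Div(C)^T)\cong(\Z/p\Z)^N$ with one factor per branch point (Lemma \ref{lem:div}), and finishes with a snake-lemma chase and the degree map (Lemma \ref{lem:pic}); as a by-product it gets the explicit description of the isomorphism in \S\ref{sect:isom-exp}. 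You instead identify the cokernel with the Tate group $\hat H^0(T,J(C))$ (your verification that $f^*J(C')=N_T\,J(C)$ via surjectivity of $\mathrm{Nm}$ is correct), shift dimension through the uniformization $0\to\Lambda\to W\to J(C)\to 0$, and obtain the order from the Herbrand quotient plus Riemann--Hurwitz; the elementary abelian structure is automatic because all Tate groups here are killed by $|T|=p$. Your Herbrand computation (a factor $p^{2g'}$ from the trivial isotypic part, a factor $p^{-(2g'+N-2)}$ from the $\mathbb{Q}(\zeta_p)$-isotypic part) is correct. What your route buys is brevity and a transparent appearance of the exponent $N-2$; what it costs is transcendental input and one nontrivial fact you assert but do not prove: the connectedness of $\ker\mathrm{Nm}$.

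That flagged step is genuine, but you can close it with tools already in your argument, using hypothesis (1) rather than (2) and no Prym theory: under the canonical principal polarizations of $J(C)$ and $J(C')$, the maps $f^*$ and $\mathrm{Nm}$ are dual homomorphisms, and for any surjective homomorphism $\phi:A\to B$ of abelian varieties one has $\Ker(\widehat{\phi})\cong$ the dual of the finite group $\pi_0(\Ker\phi)$ (factor $\phi$ through the quotient of $A$ by the identity component of $\Ker\phi$, and dualize the resulting isogeny). Applied to $\phi=\mathrm{Nm}$ this gives $\Ker(f^*)\cong$ the dual of $\pi_0(\ker\mathrm{Nm})$, so hypothesis (1) is literally equivalent to the connectedness of $\ker\mathrm{Nm}$; your dimension count then upgrades the inclusion $(\sigma-1)J(C)\subseteq\ker\mathrm{Nm}$ to equality, giving $\hat H^{-1}(T,J(C))=0$ and completing the proof. (This equivalence is also consistent with the paper's appendix, Theorem \ref{thm:inj-jac}, which shows that conditions (1) and (2) are in fact equivalent to each other.)
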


Throughout this section, we keep the notation introduced in Theorem \ref{thm:conn-comp}, in particular $p$ denotes
a prime number and $T\cong \Z/p\Z$ denotes the Galois group of $\bbC(C)/\bbC(C')$. We assume neither (1) nor (2)
until \S \ref{sect:pf-conn-comp}.

\begin{remark}\ 
\begin{enumerate}
\item
The assumptions (1) and (2) of the theorem are redundant because, as we show in the appendix (see Theorem
\ref{thm:inj-jac}), conditions (1) and (2) are {\it equivalent}.  Unfortunately, the proof of the latter
equivalence uses \'etale cohomology.  On the other hand, in the application to integrable systems, both (1) and (2)
are easily verified (see Proposition \ref{prop:inj} and \eqref{eq:no-branch-pt}), thereby the use of \'etale
cohomology can be avoided to establish the main results of this paper.
\item Under the conditions of the theorem, it cannot happen that $N=1$. 
\item The isomorphism $\Coker(f^* : J(C') \to J(C)^T) \cong (\Z/p\Z)^{N-2}$ can be explicitly described (see \S
  \ref{sect:isom-exp}), but we will not need this result.
\end{enumerate}
\end{remark}

\subsection{Lemmas on Galois cohomology}\label{sect:convention}
In the rest of this section, we use the following conventions.
For a $T$-module $A$ (i.e. an abelian group on which $T$ acts linearly) we write $H^*(T, A)$ for the group
cohomology of $T$ with values in $A$, so that $H^0(T, A)=A^T:=\{ a \in A ~|~ \tau^*(a)=a ~\text{for all}~ \tau \in
T \}$.  We use the standard notation $\mu_p := \{ \zeta \in \bbC^* ~|~ \zeta^p=1 \}$.  We regard $\bbC^*$ as a
trivial $T$-module.

\begin{lemma}\label{lem:gal-1}
We have
$$
  H^q(T, \bbC^*) \cong\left\{ 
  \begin{array}{ll}
    \bbC^* & ~\text{if}~q=0\;,\\
    \mu_p & ~\text{if}~q \equiv 1 \pmod 2\;,\\
    0 & ~\text{if}~q \equiv 0 \pmod 2,~ q>0\;.
  \end{array}
  \right.
$$
\end{lemma}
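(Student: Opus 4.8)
The plan is to reduce the computation to the standard explicit description of the cohomology of a finite cyclic group. Fix a generator $\sigma$ of $T \cong \Z/p\Z$, and for an arbitrary $T$-module $A$ introduce the norm endomorphism $N_\sigma := \sum_{i=0}^{p-1}\sigma^i$ and the difference endomorphism $D_\sigma := \sigma - 1$. It is classical (read off from the standard free periodic resolution of $\Z$ over $\Z[T]$, whose differentials alternate between multiplication by $D_\sigma$ and by $N_\sigma$) that the cohomology of a cyclic group is two-periodic in positive degrees: one has $H^0(T,A)=A^T$, while for $q>0$
$$
H^q(T,A) \cong
\begin{cases}
\ker(N_\sigma)/\Im(D_\sigma) & q \text{ odd},\\
A^T/\Im(N_\sigma) & q \text{ even}.
\end{cases}
$$
Once this is in hand, the proof is a direct specialization to the trivial module $A=\bbC^*$.

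First I would evaluate the two operators on $A=\bbC^*$, written multiplicatively, where $\sigma$ acts trivially. The difference map becomes trivial, since $D_\sigma(a)=\sigma(a)a^{-1}=1$ for all $a$, so $\Im(D_\sigma)=\{1\}$ and $A^T=\bbC^*$; in particular $H^0(T,\bbC^*)=\bbC^*$. The norm map becomes the $p$-th power map, $N_\sigma(a)=\prod_{i=0}^{p-1}\sigma^i(a)=a^p$, whence $\ker(N_\sigma)=\{a\in\bbC^*\mid a^p=1\}=\mu_p$. Substituting into the odd-degree case of the periodicity formula gives $H^q(T,\bbC^*)\cong\mu_p/\{1\}=\mu_p$ for all odd $q$, as required.

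The only substantive input needed for the even positive degrees is the divisibility of $\bbC^*$: every nonzero complex number has a $p$-th root, so the $p$-th power map $N_\sigma$ is surjective, i.e. $\Im(N_\sigma)=\bbC^*$. Hence $A^T/\Im(N_\sigma)=\bbC^*/\bbC^*=0$, yielding $H^q(T,\bbC^*)=0$ for even $q>0$ and completing the three cases. I do not expect any genuine obstacle here: the entire content of the lemma is the surjectivity of the $p$-th power map on $\bbC^*$ (together with the harmless facts $A^T=\bbC^*$ and $\ker(N_\sigma)=\mu_p$), everything else being the formal two-periodicity of cyclic-group cohomology.
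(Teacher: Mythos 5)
Your proposal is correct and follows essentially the same route as the paper: both compute $H^*(T,\bbC^*)$ from the standard two-periodic resolution for a cyclic group (the paper phrases it via the complex $A \xrightarrow{1-\tau} A \xrightarrow{D} A \to \cdots$ with $D = 1+\tau+\cdots+\tau^{p-1}$, you via the equivalent quotient formulas $\ker(N_\sigma)/\Im(D_\sigma)$ and $A^T/\Im(N_\sigma)$), then specialize to the trivial module, where the difference map vanishes and the norm becomes the $p$-th power map, whose surjectivity on $\bbC^*$ and kernel $\mu_p$ give the three cases.
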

\begin{proof}
Choose a generator $\tau$ of $T$.  Recall that for any $T$-module $A$ the cohomology $H^*(T, A)$ can be computed as
a cohomology of the complex (see \cite[Chapter VIII, \S 4]{Serre2})
$$
   A \overset{1-\tau}{\longrightarrow}
   A \overset{D}{\longrightarrow}
   A \overset{1-\tau}{\longrightarrow}
   A \overset{D}{\longrightarrow}
   \dots.
$$
where $D=1+\tau+\dots+\tau^{p-1}$.  If the $T$-module structure on $A$ is trivial, then we have $(1-\tau)(a)=0$ and
$D(a)=pa$ for any $a \in A$. For $A=\bbC^*$ (and upon using multiplicative notation) this leads to the announced
result.
\end{proof}

\begin{lemma}\label{lem:gal-2}
We have
$$
  H^q(T, \bbC(C)^*) \cong\left\{ 
  \begin{array}{ll}
    \bbC(C')^* & ~\text{if}~q =0\;,\\
    0 & ~\text{if}~q>0\;.
  \end{array}
  \right.
$$
\end{lemma}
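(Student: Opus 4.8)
The plan is to treat the degree-zero case separately by pure Galois theory, and to reduce the positive-degree cases to $q=1,2$ by exploiting the $2$-periodicity of the cohomology of the cyclic group $T$. For $q=0$ we have $H^0(T,\bbC(C)^*)=(\bbC(C)^*)^T$, which consists of the nonzero elements of $\bbC(C)$ fixed by $T$; since $\bbC(C)/\bbC(C')$ is Galois with group $T$, its fixed field is exactly $\bbC(C')$, and hence $(\bbC(C)^*)^T=\bbC(C')^*$, as claimed.

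For $q>0$ I would work with the explicit complex recalled in the proof of Lemma~\ref{lem:gal-1}, now with $A=\bbC(C)^*$, so that in multiplicative notation $1-\tau$ becomes $a\mapsto a/\tau(a)$ and $D$ becomes the norm $N(a)=a\,\tau(a)\cdots\tau^{p-1}(a)$. Reading the cohomology off this complex shows it is $2$-periodic for $q>0$, so it is enough to prove $H^1(T,\bbC(C)^*)=0$ and $H^2(T,\bbC(C)^*)=0$. The vanishing of $H^1$ is exactly Hilbert's Theorem~90: for the (cyclic, hence Galois) extension $\bbC(C)/\bbC(C')$ every element of norm $1$ has the form $a/\tau(a)$, which says precisely that $H^1=\Ker D/\{a/\tau(a)\}$ is trivial. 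This takes care of all odd $q$.

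The even-degree case is where the genuine input lies, and I expect it to be the main obstacle. From the complex, $H^2(T,\bbC(C)^*)\cong \bbC(C')^*/N(\bbC(C)^*)$, which for a cyclic extension is canonically the relative Brauer group $\mathrm{Br}(\bbC(C)/\bbC(C'))$; this embeds into $\mathrm{Br}(\bbC(C'))$. The point is thus to show that the latter vanishes. This is Tsen's theorem: the function field $\bbC(C')$ of a curve over the algebraically closed field $\bbC$ is a $C_1$ field, so its Brauer group is trivial. Hence $H^2(T,\bbC(C)^*)=0$, and by the periodicity $H^q(T,\bbC(C)^*)=0$ for all $q>0$, completing the proof.
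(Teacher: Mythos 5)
Your proposal is correct and follows essentially the same route as the paper's proof: the $q=0$ case by Galois theory, reduction of $q>0$ to $q=1,2$ via the $2$-periodicity of cyclic group cohomology, Hilbert's Theorem 90 for $H^1$, and the identification of $H^2$ with the relative Brauer group $\mathrm{Br}(\bbC(C)/\bbC(C'))$ killed by Tsen's theorem. The only difference is that you spell out details (the fixed-field argument, the norm-quotient description of $H^2$, the $C_1$ property) that the paper leaves as citations to Serre.
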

\begin{proof}
The result for $q=0$ is obvious.  The vanishing for $q>0$ is reduced to the cases $q=1, 2$, because the group
cohomology (in degree $q \geqs 1$) of a cyclic group depends only on the parity of $q$ (see ibid.).  We have
$H^1(T, \bbC(C)^*)=0$ by Hilbert's Theorem 90 (see \cite[Chapter X, Proposition 2]{Serre2}).  As for $H^2(T,
\bbC(C)^*)$, first we note that this group is isomorphic to the subgroup $\mathop{\rm Br}(\bbC(C)/\bbC(C'))$ of the
Brauer group $\mathop{\rm Br}(\bbC(C'))$ of $\bbC(C')$ consisting of all elements split by $\bbC(C)$ (see
\cite[Chapter X, Corollary to Proposition 6]{Serre2}), and then we apply Tsen's theorem to get $\mathop{\rm
  Br}(\bbC(C'))=0$ (see \cite[Chapter X, \S 7]{Serre2}).  The lemma is proved.
\end{proof}

\begin{lemma}\label{lem:gal-3}
We have
\begin{align*}
  &H^q(T, \bbC(C)^*/\bbC^*) \cong\left\{ 
    \begin{array}{ll}
      0 & ~\text{if}~q \equiv 1 \pmod 2\;,\\
      \mu_p & ~\text{if}~q \equiv 0 \pmod 2,\ q>0\;,
    \end{array}
  \right.\\
  &\Coker\Big( \bbC(C')^*/\bbC^* \to (\bbC(C)^*/\bbC^*)^T \Big)\cong \mu_p\;.
\end{align*}
\end{lemma}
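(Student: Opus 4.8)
The plan is to derive both assertions from the long exact sequence in group cohomology attached to the short exact sequence of $T$-modules
$$ 1 \to \bbC^* \to \bbC(C)^* \to \bbC(C)^*/\bbC^* \to 1, $$
where $\bbC^*$ carries the trivial $T$-action. The cohomology of the two left-hand terms is already known: Lemma \ref{lem:gal-1} computes $H^*(T, \bbC^*)$ and Lemma \ref{lem:gal-2} computes $H^*(T, \bbC(C)^*)$. Feeding these into the long exact sequence will pin down $H^*(T, \bbC(C)^*/\bbC^*)$ and the desired cokernel.

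First I would treat the range $q \geq 1$. Since $H^q(T, \bbC(C)^*) = 0$ for all $q>0$ by Lemma \ref{lem:gal-2}, the connecting homomorphism yields isomorphisms $H^q(T, \bbC(C)^*/\bbC^*) \cong H^{q+1}(T, \bbC^*)$ for every $q \geq 1$. Substituting the values from Lemma \ref{lem:gal-1}: when $q$ is odd, $q+1$ is even and positive, so the right-hand side vanishes; when $q$ is even and positive, $q+1$ is odd, so the right-hand side is $\mu_p$. This gives the first displayed isomorphism of the lemma.

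For the cokernel statement I would examine the low-degree terms of the same long exact sequence, namely
$$ 0 \to H^0(T, \bbC^*) \to H^0(T, \bbC(C)^*) \to H^0(T, \bbC(C)^*/\bbC^*) \to H^1(T, \bbC^*) \to H^1(T, \bbC(C)^*). $$
By Lemmas \ref{lem:gal-1} and \ref{lem:gal-2} this reads
$$ 0 \to \bbC^* \to \bbC(C')^* \to (\bbC(C)^*/\bbC^*)^T \to \mu_p \to 0, $$
the last zero coming from $H^1(T, \bbC(C)^*)=0$. The first map is the inclusion $\bbC^* \hookrightarrow \bbC(C')^*$, so the image of $\bbC(C')^* \to (\bbC(C)^*/\bbC^*)^T$ is exactly $\bbC(C')^*/\bbC^*$; exactness then identifies the cokernel of $\bbC(C')^*/\bbC^* \to (\bbC(C)^*/\bbC^*)^T$ with $\mu_p$.

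The computation is essentially routine once the two input lemmas are in place; the only point requiring a moment's care is the identification in the last step, i.e. checking that the natural map appearing in the statement of the lemma coincides, up to the harmless inclusion of $\bbC^*$, with the arrow $H^0(T, \bbC(C)^*) \to H^0(T, \bbC(C)^*/\bbC^*)$ induced by the quotient, so that its image is precisely $\bbC(C')^*/\bbC^*$ and its cokernel can be read off directly from the long exact sequence.
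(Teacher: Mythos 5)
Your proposal is correct and follows exactly the paper's argument: the paper also applies the long exact sequence of the short exact sequence $0 \to \bbC^* \to \bbC(C)^* \to \bbC(C)^*/\bbC^* \to 0$ together with Lemmas \ref{lem:gal-1} and \ref{lem:gal-2}, merely stating this in one line where you have written out the details. Your explicit treatment of the degeneration $H^q(T, \bbC(C)^*/\bbC^*) \cong H^{q+1}(T, \bbC^*)$ for $q \geq 1$ and of the low-degree five-term sequence is precisely what the paper leaves to the reader.
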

\begin{proof}
We consider the following short exact sequence of $T$-modules:
$$ 
  0 \to \bbC^* \to \bbC(C)^* \to \bbC(C)^*/\bbC^* \to 0\;. 
$$
The long exact sequence derived from this, together with the previous lemmas, completes the proof.
\end{proof}

\subsection{Lemmas on Picard and divisor groups}
For any irreducible smooth projective curve $X$ over $\C$, we write $\Pic(X)$ and $\Div(X)$ for the Picard group
and divisor group of $X$ respectively.  Recall that $J(X)$ is identified with the kernel of the degree map $\deg :
\Pic(X) \to \Z$.

\begin{lemma}\label{lem:pic}
We have an isomorphism
$$ 
  \Ker(f^* : J(C') \to J(C)) \cong \Ker(f^* : \Pic(C') \to \Pic(C))
$$
and an exact sequence
$$ 
  0 \to \Coker(J(C') \overset{f^*}{\to} J(C)^T)\to\Coker(\Pic(C') \overset{f^*}{\to} \Pic(C)^T)\overset{(*)}{\to} \Z/p\Z\;. 
$$
Moreover, if $N>0$, then $(*)$ is surjective.
\end{lemma}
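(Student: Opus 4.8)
The plan is to relate the three groups $J$, $\Pic$, and $\Z$ through the degree map. Recall that $J(X) = \Ker(\deg : \Pic(X) \to \Z)$ for any curve $X$, so we have a commutative diagram of short exact sequences of $T$-modules
\begin{equation*}
\begin{array}{ccccccccc}
0 & \to & J(C') & \to & \Pic(C') & \overset{\deg}{\to} & \Z & \to & 0 \\
  &     & \downarrow f^* & & \downarrow f^* & & \downarrow \cdot p & & \\
0 & \to & J(C) & \to & \Pic(C) & \overset{\deg}{\to} & \Z & \to & 0\;,
\end{array}
\end{equation*}
where the right vertical map is multiplication by $p$ because $f$ has degree $p$ (so $\deg f^*(D) = p \deg(D)$), and $T$ acts trivially on $\Z$ and on the image of $f^*$. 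The first isomorphism is immediate from this diagram: since the degree map is injective on $\Ker(f^*)$ only if... more directly, an element of $\Ker(f^* : \Pic(C') \to \Pic(C))$ has image in $\Ker(\cdot p : \Z \to \Z) = 0$, hence lies in $J(C')$, giving $\Ker(f^* : \Pic(C') \to \Pic(C)) = \Ker(f^* : J(C') \to J(C))$.

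For the exact sequence, first I would pass to $T$-invariants. Taking $H^0(T, -) = (-)^T$ of the bottom row and using that $T$ acts trivially on $\Z$ gives the left-exact sequence $0 \to J(C)^T \to \Pic(C)^T \overset{\deg}{\to} \Z$. Together with the top row (on which $T$ acts trivially, as these are the source groups of $f^*$) this yields a commutative diagram with exact rows
\begin{equation*}
\begin{array}{ccccccc}
0 \to & J(C') & \to & \Pic(C') & \overset{\deg}{\to} & \Z & \to 0 \\
      & \downarrow & & \downarrow & & \downarrow \cdot p & \\
0 \to & J(C)^T & \to & \Pic(C)^T & \overset{\deg}{\to} & \Z\;. &
\end{array}
\end{equation*}
Applying the snake lemma to this diagram produces a long exact sequence relating the kernels and cokernels of the three vertical maps. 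The cokernel of $\cdot p : \Z \to \Z$ is $\Z/p\Z$, and the snake connecting homomorphism lands in this $\Z/p\Z$; this is exactly the map $(*)$. The resulting exactness gives
\begin{equation*}
0 \to \Coker(J(C') \overset{f^*}{\to} J(C)^T) \to \Coker(\Pic(C') \overset{f^*}{\to} \Pic(C)^T) \overset{(*)}{\to} \Z/p\Z\;,
\end{equation*}
as claimed. I would need to check that the snake-lemma sequence terminates correctly, i.e. that the cokernel term from $\Pic$ maps into the $\Z/p\Z$ coming from the degree comparison rather than into the full $\Z$, but this follows from chasing the diagram and using that the bottom degree map need not be surjective onto $\Z$.

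For the final assertion, I would show that $(*)$ is surjective when $N > 0$. By the snake lemma, the image of $(*)$ in $\Z/p\Z$ is controlled by whether the bottom degree map $\Pic(C)^T \to \Z$ hits elements outside $p\Z$, equivalently whether there exists a $T$-invariant divisor class on $C$ of degree coprime to $p$. If $f$ is branched, pick a branch point: its ramification is total (since $[\bbC(C):\bbC(C')]=p$ is prime, every point of $C$ over a branch point is totally ramified), so the reduced fiber over a branch point is a single point $P$ with $\tau^*P = P$ for all $\tau \in T$, giving a $T$-invariant divisor of degree $1$ on $C$. Its class maps to $1 \in \Z/p\Z$ under the composite $\Pic(C)^T \to \Z \to \Z/p\Z$, which forces $(*)$ to be surjective. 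The main obstacle I anticipate is the bookkeeping in the snake lemma: one must verify precisely which portion of the long exact sequence yields the stated four-term sequence and confirm that the connecting map is the intended $(*)$; the surjectivity claim then reduces to the clean geometric observation about totally ramified branch points, which is the conceptually important input.
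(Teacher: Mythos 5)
Your proposal is correct and takes essentially the same route as the paper: the identical commutative diagram comparing $0 \to J(C') \to \Pic(C') \to \Z \to 0$ with the left-exact sequence of $T$-invariants $0 \to J(C)^T \to \Pic(C)^T \to \Z$, the snake lemma with multiplication by $p$ on $\Z$ producing the $\Z/p\Z$ term, and the same geometric input for surjectivity, namely that a ramification point (totally ramified since $p$ is prime) gives a $T$-fixed class of degree $1$, so $\deg : \Pic(C)^T \to \Z$ is surjective and hence so is $(*)$.
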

\begin{proof}
The first statement is obtained by applying the snake lemma to the following commutative diagram with exact rows
$$
  \begin{matrix}
    0 \to & J(C') & \to & \Pic(C') & \overset{\deg}{\to} & \Z &\to 0\\[1mm]
    & \downarrow^{f^*} &  & \downarrow^{f^*} &  & \downarrow^{p} &\\[1mm]
    0 \to & J(C)^T & \to & \Pic(C)^T & \overset{\deg}{\to} & \Z. &
  \end{matrix}
$$
Suppose that $N>0$.  Let $x \in C$ be a ramification point of $f: C \to C'$.  Then its class $[x] \in \Pic(C)$ is
fixed by $T$ and $\deg([x])=1.$ It follows that $\deg : \Pic(C)^T \to \Z$ is surjective, hence so is $(*)$.
\end{proof}

\begin{lemma}\label{lem:div}
We have
$$ 
  \Coker \Big( f^* : \Div(C') \to \Div(C)^T \Big) \cong (\Z/p\Z)^N\;. 
$$
\end{lemma}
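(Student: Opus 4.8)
The plan is to compute the cokernel locally, fiber by fiber, exploiting the fact that a Galois covering of prime degree~$p$ is \emph{totally} ramified over each of its branch points. First I would record the $T$-equivariant decomposition
$$ \Div(C) = \bigoplus_{x' \in C'} D_{x'}, $$
where $D_{x'}$ is the subgroup of $\Div(C)$ freely generated by the points of $C$ lying over $x'$. Since $T$ preserves the fibers of $f$, this is a decomposition of $T$-modules, and correspondingly $\Div(C') = \bigoplus_{x'} \Z\,[x']$ with $f^*$ respecting the grading. Passing to invariants and cokernels therefore reduces the problem to computing, for each $x' \in C'$, the local cokernel of $f^* : \Z\,[x'] \to (D_{x'})^T$.

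Next I would treat the two types of fibers separately. If $x' \notin \br$, its fiber consists of $p$ distinct points permuted simply transitively by $T$, so $D_{x'} \cong \Z[T]$ as a $T$-module. The invariants $(\Z[T])^T$ are generated by the norm element $D = 1 + \tau + \dots + \tau^{p-1}$ applied to any point of the fiber, which equals $f^*[x']$; hence the local cokernel vanishes. If $x' \in \br$, then total ramification forces the fiber to be a single point $x$, necessarily fixed by $T$, with ramification index $p$, so that $f^*[x'] = p\,[x]$. Thus $(D_{x'})^T = \Z\,[x]$ while the image of $f^*$ is $p\Z\,[x]$, giving a local cokernel isomorphic to $\Z/p\Z$.

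Summing these local contributions over all $x' \in C'$ leaves exactly one copy of $\Z/p\Z$ for each of the $N = |\br|$ branch points, which yields the desired isomorphism $\Coker(f^*) \cong (\Z/p\Z)^N$. I expect the only nonformal ingredient to be the total ramification of $f$ over its branch locus: because $p$ is prime and $f$ is Galois, the ramification indices over any point of $C'$ are all equal and divide $p$, so a branch point has index $p$ everywhere and hence a single preimage. Once this structural fact is in place, the remainder is a routine bookkeeping of invariants of the regular representation versus the trivial one, and no deeper input is required.
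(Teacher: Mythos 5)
Your proof is correct and takes essentially the same approach as the paper: the identical $T$-equivariant decomposition $\Div(C) = \bigoplus_{x' \in C'} D_{x'}$, followed by the fiberwise computation that the local cokernel is $\Z/p\Z$ at each branch point and zero elsewhere. The only difference is that you make explicit the structural facts (the regular-representation structure of $D_{x'}$ over unramified points, and the total ramification over branch points forced by $p$ being prime) which the paper's terser proof leaves implicit.
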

\begin{proof}
For each $x' \in C'$, define a $T$-submodule $D_{x'}$ of $\Div(C)$ by $D_{x'} := \oplus_{x \in f^{-1}(x')} \Z x$.
There is a direct sum decomposition $\Div(C) = \oplus_{x' \in C'} D_{x'}$ as $T$-modules.  Thus we have $\Div(C)^T
= \oplus_{x' \in C'} D_{x'}^T$ and
$$ 
  \Coker \Big( f^* : \Div(C') \to \Div(C)^T \Big) \cong\bigoplus_{x' \in C'} \Coker(f^* : \Z x' \to D_{x'}^T)\;.
$$
If $x'$ is a branch point, then the cokernel of $\Z x' \to D_{x'}^T (=D_{x'})$ is isomorphic to $\Z/p\Z$.  If $x'$
is not a branch point, then $\Z x' \to D_{x'}^T$ is an isomorphism.  The lemma follows.
\end{proof}

\subsection{Proof of Theorem \ref{thm:conn-comp}}\label{sect:pf-conn-comp}
We consider the following commutative diagram with exact rows, 
in which vertical maps are induced by $f:C\to C'$:
$$
\begin{matrix}
  0 \to & \bbC(C')^*/\bbC^* & \to & \Div(C') & \to & \Pic(C') & \to 0\\[1mm]
  & \downarrow^{\alpha} & & \downarrow^{\beta} &  & \downarrow^{\gamma} & \\[1mm]
  0 \to & (\bbC(C)^*/\bbC^*)^T & \to & \Div(C)^T &\overset{\delta}{\to} & \Pic(C)^T\;. &
\end{matrix}
$$
By the last part of Lemma \ref{lem:gal-3}, $\Coker(\alpha)$ is a cyclic group of order $p$.  By Lemma
\ref{lem:div}, we have $\Coker(\beta) \cong (\Z/p\Z)^N$.  By the first part of Lemma \ref{lem:pic} and assumption
(1), $\gamma$ is injective.  Lemma \ref{lem:gal-3} (applied to $q=1$) shows that $\delta$ is surjective.  Therefore
we get $\Coker(\gamma) \cong (\Z/p\Z)^{N-1}$.  Now the theorem follows from the second part of Lemma \ref{lem:pic}
and assumption (2).  
\qed

\subsection{Explicit description of the isomorphism}\label{sect:isom-exp}
Recall that $\br\subset C'$ is the set of branch points of $f$ so that $f$ restricts to a bijection
$f|_{f^{-1}(\br)} : f^{-1}(\br) \to \br$.  We write $\tilde{\beta}$ for the composition of maps
$$
  \bigoplus_{x' \in \br} \Z x' \cong\bigoplus_{x \in f^{-1}(\br)} \Z x \subset \Div(C) \to \Pic(C)\;,
$$
where the first map is induced by the inverse of $f|_{f^{-1}(\br)}$.

By Kummer theory, there exists a rational function $g \in \C(C)$ such that $g'\circ f= g^p$ for some function
$g'\in \C(C')$ and such that $\C(C)$ is generated by $g$ over $\C(C')$.  Since $f$ is unramified outside $\br$, the
divisor $\div(g') \in \Div(C')$ of $g'$ can be written as $\div(g')= D_1 + p D_2$ where $D_1$ (resp. $D_2$) is a
divisor on $C'$ supported on $\br$ (resp. on $C' \setminus \br$).  The proof of Theorem \ref{thm:conn-comp} shows
that there is an exact sequence
$$ 
  0 \to \Z/p\Z \overset{\alpha}{\to}\bigoplus_{x' \in \br} (\Z/p\Z)x'\overset{\beta}{\to} \Coker(\Pic(C') \overset{f^*}{\to} \Pic(C))\to 0\;,
$$
where $\alpha$ is defined by $\alpha(n):=n D_1 \pmod p$, and $\beta$ is induced by $\tilde{\beta}$.  Restricting to
the degree zero part, one obtains a description of the isomorphism given in Theorem \ref{thm:conn-comp}.

\section{$PGL_p(\C)$ action on the space of polynomial matrices}

\subsection{Space of polynomial matrices}
Let $p \geqs 2,~ d \geqs 1$ be arbitrary integers.  We use the standard notation $\Mat_p(\bbC)$ for the algebra of
all $p \times p$ complex matrices, $GL_p(\bbC)$ for the group of invertible elements of $\Mat_p(\bbC)$ and
$PGL_p(\bbC)$ for the quotient group $GL_p(\bbC)/\{ c \bbI_p ~|~ c \in \bbC^* \}$.

We denote by $M=M_{p, d}$ the set of all $p \times p$ matrices whose entries are polynomials of degree $\leqs d$ in
$x$:
\begin{equation}\label{eq:phase}
  M = \{ L(x) = (\ell_{ij}(x))_{i, j=1, \dots, p}~|~ \ell_{ij}(x) \in \bbC[x], ~ \deg(\ell_{ij}(x)) \leqs d \}\;.
\end{equation}
For $g \in PGL_p(\bbC)$ and $L(x) \in M$, we define $\Ad(g)(L(x)) := \tilde{g} L(x) \tilde{g}^{-1}$ where
$\tilde{g} \in GL_p(\bbC)$ is any representative of $g$.  The class of $L(x) \in M$ in the orbit space
$M/PGL_p(\bbC)$ is denoted by $[L(x)]$.  As we will recall in Section \ref{sec:aci}, $M/PGL_p(\C)$ is the phase
space of the Beauville system, and is therefore fundamental in this paper.

For $L(x) \in M$, we define
$$ 
  {\rm Stab}(L(x)) := \{ g \in PGL_p(\bbC) ~|~ \Ad(g)(L(x))=L(x) \}\;.
$$
We will need the following result.

\begin{lemma}\cite[p.215]{B}\label{lem:stab}
Let $L(x) \in M$.  If the characteristic polynomial $\det(y \bbI_p - L(x)) \in \C[x, y]$ of $L(x)$ is irreducible,
then ${\rm Stab}(L(x)) = \{ 1 \}$.
\end{lemma}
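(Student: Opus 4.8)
The plan is to show that any $g \in \Stab(L(x))$ must be trivial, using the irreducibility of the characteristic polynomial to force the centralizer of $L(x)$ in $\Mat_p(\bbC(x))$ to be as small as possible. First I would lift the problem to $GL_p$: pick a representative $\tilde g \in GL_p(\bbC)$ of $g$, so that the hypothesis $\Ad(g)(L(x)) = L(x)$ reads $\tilde g L(x) = L(x) \tilde g$, i.e. $\tilde g$ commutes with $L(x)$ as a matrix over the field $K := \bbC(x)$. Thus $\tilde g$ lies in the commutant of $L(x)$ inside $\Mat_p(K)$, and the whole point is that this commutant is a field (in fact $K[L(x)]$) precisely because the characteristic polynomial is irreducible.

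The key algebraic input is the following standard fact from linear algebra over a field: if a matrix $A \in \Mat_p(K)$ has an irreducible characteristic polynomial $\chi_A(y) \in K[y]$ of degree $p$, then $\chi_A$ coincides with the minimal polynomial of $A$, so $A$ is a cyclic (non-derogatory) matrix, and its commutant in $\Mat_p(K)$ is exactly the commutative algebra $K[A] = \{q(A) : q \in K[y]\}$, which has $K$-dimension $p$. Because $\chi_A$ is irreducible, $K[A] \cong K[y]/(\chi_A(y))$ is a field, namely the function field of the spectral curve. I would apply this with $A = L(x)$, whose characteristic polynomial $\det(y\bbI_p - L(x))$ is irreducible by hypothesis, to conclude that $\tilde g = q(L(x))$ for some polynomial $q \in K[y]$.

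It remains to pin down which elements $q(L(x))$ of this field are scalar matrices, since $\Stab$ lives in $PGL_p$ and we only need $\tilde g$ to be a scalar multiple of the identity. Here I would use the constraint that $\tilde g \in GL_p(\bbC)$, i.e. $\tilde g$ is a \emph{constant} matrix, independent of $x$. An element $q(L(x)) \in K[L(x)]$ that happens to be a constant matrix generates a $\bbC$-subalgebra of the field $K[L(x)]$; being a finite-dimensional $\bbC$-algebra that is a domain, it is a finite field extension of $\bbC$, hence equals $\bbC$. Therefore $\tilde g$ is a scalar matrix $c\bbI_p$, and its class $g$ in $PGL_p(\bbC)$ is the identity. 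This shows $\Stab(L(x)) = \{1\}$.

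The main obstacle is the final step: one must argue carefully that a constant matrix lying in the field $K[L(x)]$ is forced to be scalar, rather than merely lying in some intermediate subfield. The cleanest route is to observe that $\tilde g$ is simultaneously an element of $\Mat_p(\bbC)$ and an element of the field $K[L(x)]$; any nonzero element of a field is either zero or a unit, so the minimal polynomial of the constant matrix $\tilde g$ over $\bbC$ must be irreducible, and since $\bbC$ is algebraically closed this minimal polynomial is linear, forcing $\tilde g = c\bbI_p$. Alternatively, and perhaps more transparently, I would note that $\tilde g$ commutes with $L(x)$ for \emph{all} $x$; evaluating at a generic point $x_0 \in \bbC$ where $L(x_0)$ has $p$ distinct eigenvalues (which holds generically since the spectral curve is irreducible, hence reduced, so its projection has distinct fibers over a dense open set), the commutant of $L(x_0)$ in $\Mat_p(\bbC)$ is the diagonal algebra in an eigenbasis, and forcing $\tilde g$ to commute with $L(x)$ at two such points with different eigenbases collapses it to a scalar.
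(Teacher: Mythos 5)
The paper itself contains no argument for this lemma: it is quoted verbatim from Beauville \cite[p.215]{B}, so there is no in-paper proof to compare against. Your main argument is correct, and it is essentially the standard (Beauville-style) one: a representative $\tilde g\in GL_p(\bbC)$ of $g\in\Stab(L(x))$ commutes with $L(x)$ over $K=\bbC(x)$; since the characteristic polynomial is irreducible it equals the minimal polynomial, so the commutant of $L(x)$ in $\Mat_p(K)$ is $K[L(x)]\cong K[y]/(\det(y\bbI_p-L(x)))$, a field; and a constant matrix lying in a field has irreducible minimal polynomial over $\bbC$, hence is scalar because $\bbC$ is algebraically closed. Two caveats. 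First, the hypothesis gives irreducibility in $\bbC[x,y]$, whereas your commutant step needs irreducibility in $K[y]$; since the characteristic polynomial is monic in $y$ this follows from Gauss's lemma, but the step should be stated. Second, your ``more transparent'' alternative at the end does not work as written: requiring $\tilde g$ to commute with $L(x_0)$ and $L(x_1)$ at just two generic points does not force $\tilde g$ to be scalar, because two diagonal algebras taken with respect to different bases can intersect in more than the scalars (e.g.\ for $p=3$, the matrix $\diag(a,b,b)$ in the basis $e_1,e_2,e_3$ is also diagonal in the basis $e_1,\,e_2+e_3,\,e_2-e_3$, so non-scalar elements survive even when the eigenbases differ). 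The correct elementary variant uses all $x$ at once: if $\tilde g$ were non-scalar, its eigenspace decomposition of $\bbC^p$ would be preserved by $L(x)$ for every $x$, so $L(x)$ would be block-diagonal in a fixed constant basis, factoring $\det(y\bbI_p-L(x))$ in $\bbC[x,y]$ into factors of positive $y$-degree and contradicting irreducibility. Since that alternative is offered only as a remark, your primary proof stands.
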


\subsection{The automorphism $\tau$}
We define on $M$ an automorphism $\tau$ of order~$p$~by
\begin{equation}\label{eq:tau}
  \tau : M \to M\;, \qquad  \tau(L(x)):=L(\zeta x)\;,
\end{equation}
where $\zeta:=e^{2\pi i/p}$.  The action of $PGL_p(\bbC)$ on $M$ commutes with $\tau$; namely, we have
$\tau(\Ad(g)(L(x)))=\Ad(g)(\tau(L(x)))$ for any $g \in PGL_p(\bbC)$ and $L(x) \in M$.  Therefore $\tau$ induces a
map
$$ 
  M/PGL_p(\C) \to M/PGL_p(\C)\;,\qquad [L(x)] \mapsto [\tau(L(x))]
$$
which, by abuse of notation, is denoted by the same latter $\tau$.  We define
\begin{align*}
  M' &:= \{ L(x) \in M ~|~ [\tau(L(x))]=[L(x)] ~ \}\;,\\
  M_{\ir}' &:= \{ L(x) \in M' ~|~ \det(y \bbI_r - L(x))\text{ is irreducible } \}\;.
\end{align*}
In view of Lemma \ref{lem:stab}, for each $L(x) \in M_{\ir}'$ there exists a unique $g_{L(x)} \in PGL_p(\bbC)$ such
that $\tau(L(x)) = \Ad(g_{L(x)})(L(x))$.  Since $\tau$ is an automorphism of order $p$, $g_{L(x)}$ must belong to
the closed subset
\begin{equation}\label{eq:cR}
  \cR := \{ \Delta \in PGL_p(\bbC) ~|~ \Delta^p = 1 \}
\end{equation}
of $PGL_p(\bbC)$.  We have defined a map
\begin{equation}\label{eq:map-m-cr}
  M_{\ir}' \to \cR, \qquad L(x) \mapsto g_{L(x)} 
\end{equation}
characterized by $\tau(L(x)) = \Ad(g_{L(x)})(L(x))$.  In the next subsection, we study the structure of $\cR$.

\subsection{$p$-torsion elements in $PGL_p(\bbC)$}\label{sect:p-tor}
We define an equivalence relation on the $p$-fold product $(\Z/p\Z)^p$ of $\Z/p\Z$ as follows: Two elements $(e_1,
\dots, e_p)$, $(e_1', \dots, e_p') \in (\Z/p\Z)^p$ are equivalent if and only if there exist $c \in \Z/p\Z$ and
$\sigma \in \fS_p$ such that $e_i = c + e_{\sigma(i)}'$ (in $\Z/p\Z)$ for all $i=1, \dots, p$.  (Here $\fS_p$
denotes the permutation group on $p$ letters.)  We choose a system of representatives $\cE \subset (\Z/p\Z)^p$ for
this equivalence relation.  For simplicity, we assume that the two elements $\uz := (0, 0, \dots, 0)$ and $\uo :=
(0, 1, 2, \dots, p-1)$ of $(\Z/p\Z)^p$ belong to $\cE$.  (In Lemma \ref{lem:rep} below, we give an explicit
construction of a system of representatives $\cE$.)

Let $\ue =(e_1, \dots, e_p) \in (\Z/p\Z)^p$.  We define $\Delta_{\ue}$ to be the element of $PGL_p(\bbC)$
represented by the diagonal matrix $\diag(\zeta^{e_1}, \zeta^{e_2}, \dots, \zeta^{\e_p}) \in GL_p(\bbC)$, where
$\zeta:=e^{2\pi i/p}$.  We also define
\begin{equation}
  \cR_{\ue} := \{ g \Delta_{\ue} g^{-1} \in \cR ~|~ g \in PGL_p(\bbC) \}\;.
\end{equation}
\begin{lemma}\label{lem:cR}
The closed subset $\cR$ of $PGL_p(\bbC)$ defined in \eqref{eq:cR} admits a disjoint union decomposition
$$ 
  \cR = \bigsqcup_{\ue \in \cE} \cR_{\ue}\;. 
$$
Moreover, $\cR_{\ue}$ is a connected component of $\cR$ for each $\ue \in \cE$.  In particular, the number of
connected components in $\cR$ is the same as the cardinality $|\cE|$ of $\cE$ (cf. Lemma \ref{lem:rep}).
\end{lemma}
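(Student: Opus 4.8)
The plan is to reduce the statement entirely to the linear algebra of eigenvalues, and then deduce the topological assertions from the continuity of the resulting ``eigenvalue type'' invariant.

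First I would analyze membership in $\cR$. Given $\Delta \in \cR$ with representative $\tilde\Delta \in GL_p(\bbC)$, the relation $\Delta^p = 1$ means $\tilde\Delta^p = c\bbI_p$ for some $c \in \bbC^*$; choosing $\lambda \in \bbC^*$ with $\lambda^p = c^{-1}$ and replacing $\tilde\Delta$ by $\lambda\tilde\Delta$, I may assume $\tilde\Delta^p = \bbI_p$. Since $X^p - 1$ has distinct roots, such a $\tilde\Delta$ is diagonalizable with eigenvalues in $\mu_p$, hence conjugate in $GL_p(\bbC)$ to $\diag(\zeta^{e_1}, \dots, \zeta^{e_p})$ for some $\ue = (e_1, \dots, e_p) \in (\Z/p\Z)^p$. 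Thus every $\Delta \in \cR$ lies in some $\cR_{\ue}$. Next I would check that $\Delta_{\ue}$ and $\Delta_{\ue'}$ are conjugate in $PGL_p(\bbC)$ if and only if $\ue$ is equivalent to $\ue'$ in the sense of \S\ref{sect:p-tor}: a conjugacy forces an equality of eigenvalue multisets $\{\zeta^{e_i}\} = \{\mu \zeta^{e_i'}\}$ for some scalar $\mu$; comparing shows $\mu = \zeta^c \in \mu_p$ and yields a permutation $\sigma$ with $e_i = c + e_{\sigma(i)}'$, which is exactly the relation defining $\cE$. Restricting to the chosen representatives $\cE$, this gives the disjoint union $\cR = \bigsqcup_{\ue \in \cE} \cR_{\ue}$.

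For the component statement, connectedness of each $\cR_{\ue}$ is immediate: it is the image of the connected group $PGL_p(\bbC)$ under the continuous map $g \mapsto \Ad(g)(\Delta_{\ue})$. It then remains to show that each $\cR_{\ue}$ is open and closed in $\cR$; since $\cE$ is finite, it suffices to show each is closed. I would package this through the type map $\Phi : \cR \to \cE$ sending $\Delta$ to the class of its normalized eigenvalue multiset, so that $\Phi^{-1}(\ue) = \cR_{\ue}$ by the previous paragraph. To see that $\Phi$ is continuous (equivalently locally constant, as $\cE$ is discrete), near any $\Delta_0$ I would choose a continuous local section $s$ of $GL_p(\bbC) \to PGL_p(\bbC)$ together with a continuous local $p$-th root, normalizing so that $s(\Delta)^p = \bbI_p$; the characteristic polynomial of this normalized lift then depends continuously on $\Delta$ while taking values in the finite set $\{\prod_i(t - \zeta^{e_i})\}$, hence is locally constant. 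This forces $\Phi$ to be locally constant, so each $\cR_{\ue} = \Phi^{-1}(\ue)$ is clopen; being also connected, it is a connected component, and the number of components equals $|\cE|$.

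The main obstacle is precisely the continuity of $\Phi$, i.e.\ the fact that the eigenvalue type cannot jump within $\cR$; this is the content behind the classical statement that the conjugacy class of a \emph{semisimple} element is closed. I would either give the hands-on argument above (local section plus continuous $p$-th root, reducing continuity to the locally constant behaviour of a characteristic polynomial valued in a finite set) or simply invoke the closedness of semisimple orbits in $PGL_p(\bbC)$; the former keeps the proof self-contained. Everything else---the diagonalizability, the eigenvalue bookkeeping, and the finite-partition argument turning ``closed'' into ``clopen''---is routine.
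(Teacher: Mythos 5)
Your proposal is correct, and its overall architecture is the same as the paper's: you normalize a lift so that $\tilde\Delta^p=\bbI_p$, deduce diagonalizability from the separability of $x^p-1$, conclude that every element of $\cR$ is conjugate to $\Delta_{\ue}$ for a unique $\ue\in\cE$ (your eigenvalue bookkeeping, including the observation that the scalar $\mu$ must itself lie in $\mu_p$, matches the paper's claim), and obtain connectedness of each $\cR_{\ue}$ as the image of the connected group $PGL_p(\bbC)$ under $g\mapsto g\Delta_{\ue}g^{-1}$. The one genuine divergence is in how local constancy of the eigenvalue type is established. The paper introduces the quotient space $\cS$ of $(\bbC^*)^p$ by simultaneous scaling and permutation, uses continuity of the eigenvalue map $\eig : PGL_p(\bbC)\to\cS$, and argues that the finite image $\cS'$ is discrete in $\cS$, so that $\cR_{\ue}=\eig|_{\cR}^{-1}(\eig(\Delta_{\ue}))$ is clopen. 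Your route---continuous local sections of $GL_p(\bbC)\to PGL_p(\bbC)$, a continuous local $p$-th root to normalize the lift, and the characteristic polynomial viewed as a continuous map into the coefficient space $\bbC^p$, where the relevant finite set of polynomials is automatically discrete---reaches the same conclusion while avoiding two points the paper leaves implicit: the continuity of the multiset-valued eigenvalue map, and the fact that a finite subset of the quotient $\cS$ is discrete (which needs a separation property of $\cS$, true here but unproved in the paper). So your version is more self-contained at that step, at the cost of a slightly longer argument; your fallback of invoking closedness of semisimple conjugacy classes is also valid but imports a less elementary fact.
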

\begin{proof}
Similarly to $\cE$, we define $\cS$ to be the quotient space of the $p$-fold product $(\bbC^*)^p$ of $\bbC^*$ (with
the quotient topology) by the following equivalence relation: two elements $(a_1, \dots, a_p), (b_1, \dots, b_p)
\in (\bbC^*)^p$ are equivalent if and only if $(a_1, \dots, a_p)=(cb_{\sigma(1)}, \dots, cb_{\sigma(p)})$ for some
$c \in \bbC^*, ~\sigma \in \fS_p$.
%
By associating to an element of $PGL_p(\bbC)$ the list of its $p$ eigenvalues (with multiplicities) we obtain a
continuous map $\eig : PGL_p(\bbC) \to \cS$.  For any $\ue=(e_1, \dots, e_p) \in \cE$, the image of $\cR_{\ue}$ by
$\eig$ consists of the single element $\eig(\Delta_{\ue}) \in \cS$ represented by $(\zeta^{e_1}, \zeta^{e_2},
\dots, \zeta^{\e_p}) \in (\bbC^*)^p$, i.e. $\cR_{\ue} \subset \eig^{-1}(\eig(\Delta_{\ue}))$.

Now we claim that for any $\Delta \in \cR$ there is a unique $\ue \in \cE$ such that $\Delta$ is conjugate to
$\Delta_{\ue}$ in $PGL_p(\bbC)$ (so that $\eig(\Delta)=\eig(\Delta_{\ue})$).  We choose a $\tilde{\Delta} \in
GL_p(\bbC)$ representing $\Delta$.  Then we have $\tilde{\Delta}^p = c \bbI_p$ for some $c \in \bbC^*$.  We may
assume $c=1$ by replacing $\tilde{\Delta}$ by $c^{-1/p} \tilde{\Delta}$.  Then the minimal polynomial of
$\tilde{\Delta}$ is a divisor of $x^p-1$, which has no multiple root.  It follows that $\tilde{\Delta}$ is a
diagonalizable matrix all of whose eigenvalues are $p$-th roots of unity, so that they can be written as
$(\zeta^{e_1},\dots,\zeta^{e_p})$, which is a representative of 
$\eig(\Delta_{\ue})$ for a unique $\ue\in\cE$.

Therefore the map $\eig : PGL_p(\bbC) \to \cS$
restricts to a continuous map
$\eig|_{\cR} : \cR \to \cS' := \{ \eig(\Delta_{\ue}) ~|~ \ue \in \cE \}$.
Let $\ue \in \cE$.
It follows from the above claim that
$\cR_{\ue} = \eig|_{\cR}^{-1}(\eig(\Delta_{\ue}))$.
Since $\cS'$ is finite and hence discrete in $\cS$,
we find that $\cR_{\ue}$ is open and closed in $\cR$, so it is the disjoint union of connected components of $\cR$.
As $\cR_{\ue}$ is the image of a continuous map
$PGL_p(\bbC) \to PGL_p(\bbC)$ defined by
$g \mapsto g \Delta_{\ue} g^{-1}$,
$\cR_{\ue}$ is connected.
This completes the proof.
\end{proof}
By this lemma, we obtain a map
\begin{equation}\label{eq:map-cr-ce}
  c_{\cE} : \cR \twoheadrightarrow \cE
\end{equation}
characterized by the property $c_{\cE}(\cR_{\ue}) = \{ \ue \}$ for all $\ue \in \cE$.  Note that we have
$c_{\cE}^{-1}(\uz) = \{ 1 \}$.  The map
\begin{equation}\label{eq:map-m-ce}
  M_{\ir}' \to \cE\;,
\end{equation}
obtained by composing $c_{\cE}$ with \eqref{eq:map-m-cr}, 
will be used later.

\subsection{Construction of $\cE$}
Let $\cP$ be the set of all $p$-term sequences $(\mu_i)_{i \in \Z/p\Z}$ of non-negative integers (indexed by
$\Z/p\Z$) such that $\sum_{i \in \Z/p\Z} \mu_i = p$.  The cardinality of $\cP$ is $\binom{2p-1}{p-1}$.  We define a
map $s : \cP \to (\Z/p\Z)^p$ by $s((\mu_i)_{i \in \Z/p\Z})=(e_1, \dots, e_p)$, where $e_1, \dots, e_p$ are defined
by the condition
$$
  e_j = i \quad \text{if and only if} \quad 1 + \sum_{k=0}^{i-1} \mu_k \leq j \leq \sum_{k=0}^{i} \mu_k
$$
for $i \in \{ 0, \dots, p-1 \}$.
%
%
%
%
We define two elements $(\mu_i)_{i \in \Z/p\Z},~ (\mu_i')_{i \in \Z/p\Z} \in \cP$ to be equivalent if and only if
$\mu_i = \mu_{i+c}'$ for some $c \in \Z/p\Z$.  We choose a representative $\cP' \subset \cP$ for this equivalence
relation.  One method to construct such a $\cP'$ is to choose a total ordering on $\cP$ (e.g. the lexicographic
order) and define $\cP'$ to be the set of all elements which are maximal in their equivalence class.

\begin{lemma}\label{lem:rep}
  Set $\cE := s(\cP') \subset (\Z/p\Z)^p$.
  \begin{enumerate}
    \item With respect to the equivalence relation introduced in \S \ref{sect:p-tor}, $\cE$ is a representative of $(\Z/p\Z)^p$.
    \item If $p$ is a prime number, then we have
      $$ |\cE|=|\cP'|= \frac{1}{p}(\binom{2p-1}{p-1}-1) + 1\;. $$
  \end{enumerate}
\end{lemma}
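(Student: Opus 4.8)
The plan is to understand precisely how the map $s$ intertwines the two equivalence relations. First I would record the elementary properties of $s$: for $\mu=(\mu_i)_{i\in\Z/p\Z}\in\cP$, the element $s(\mu)=(e_1,\dots,e_p)$ is the unique \emph{non-decreasing} sequence in $(\Z/p\Z)^p$ (with values in $\{0,\dots,p-1\}$) in which each value $i$ occurs exactly $\mu_i$ times. In particular the multiplicity vector of $s(\mu)$, namely $(|\{ j : e_j = i\}|)_{i}$, equals $\mu$, so $s$ is injective and hence $|\cE|=|s(\cP')|=|\cP'|$. It therefore remains to prove part (1) and to evaluate $|\cP'|$.

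For part (1), the key point is that the equivalence on $(\Z/p\Z)^p$ is generated by two moves: permuting the entries (the case $c=0$) and adding a global constant $c$ to every entry (the case $\sigma=\id$). Permuting the entries does not change the multiplicity vector, so passing to multiplicity vectors identifies the quotient of $(\Z/p\Z)^p$ by $\fS_p$ with $\cP$, and $s$ is exactly the section selecting the non-decreasing representative. I would then check the one remaining compatibility: adding the constant $c$ to every entry of $s(\mu)$ yields a sequence whose multiplicity vector is the cyclic shift $(\mu_{i-c})_{i\in\Z/p\Z}$, which is precisely the cyclic-shift equivalence used to define $\cP'$. Granting this, part (1) follows in two steps. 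Given any $(e_1,\dots,e_p)$ with multiplicity vector $\mu$, choose the unique $\mu'\in\cP'$ cyclic-shift-equivalent to $\mu$, say $\mu'=(\mu_{i-c})_i$; then $(e_1,\dots,e_p)$ is a permutation of $s(\mu)$, and $s(\mu)$ is equivalent to $s(\mu')$ by adding $c$, so $(e_1,\dots,e_p)\sim s(\mu')\in\cE$. Conversely, if $s(\mu')\sim s(\nu')$ with $\mu',\nu'\in\cP'$, then comparing multiplicity vectors (which only get cyclically shifted under the equivalence) forces $\mu'$ and $\nu'$ to be cyclic-shift-equivalent, hence $\mu'=\nu'$. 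Thus $\cE$ is a system of representatives.

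For part (2) I would count the number of orbits of the cyclic-shift action of $\Z/p\Z$ on $\cP$, since $|\cP'|$ equals this number. This is where primality enters. The stabilizer of any $\mu\in\cP$ is a subgroup of $\Z/p\Z$, so it is either trivial or all of $\Z/p\Z$; it is all of $\Z/p\Z$ exactly when $\mu$ is invariant under every shift, i.e. when $\mu$ is constant, and $\sum_{i}\mu_i=p$ then forces the single sequence $(1,1,\dots,1)$. Hence $(1,\dots,1)$ is the unique fixed point and forms a one-element orbit, while every other $\mu$ has trivial stabilizer and lies in an orbit of size exactly $p$. Since $|\cP|=\binom{2p-1}{p-1}$, the remaining $\binom{2p-1}{p-1}-1$ elements split into $\frac1p(\binom{2p-1}{p-1}-1)$ orbits, giving
$$ |\cP'| = \frac1p\Big(\binom{2p-1}{p-1}-1\Big)+1 . $$

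The hard part, in truth the only step requiring care, is the bookkeeping in part (1): verifying cleanly that $s$ transports the cyclic-shift equivalence on $\cP$ to the \emph{add-a-constant} move on $(\Z/p\Z)^p$, which is what makes both transversal conditions (every class is hit, no class is hit twice) drop out. The injectivity of $s$ and the orbit count are routine, the latter relying solely on the fact that $\Z/p\Z$ has no proper nontrivial subgroups.
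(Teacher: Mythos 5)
Your proof is correct and follows essentially the same route as the paper: both use the multiplicity-vector map $\mu:(\Z/p\Z)^p\to\cP$ with $\mu\circ s=\id_{\cP}$, identify permutation of entries with equality of multiplicity vectors and addition of a constant with a cyclic shift, and then count orbits of the $\Z/p\Z$-shift action on $\cP$ using primality. The only difference is cosmetic: you correctly identify the unique shift-fixed element as $(1,1,\dots,1)$, whereas the paper's proof contains a slip at this point (it says the singleton class occurs only when $\mu_i=0$ for all $i$, which is impossible since $\sum_i \mu_i = p$).
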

\begin{proof}
For $\ue = (e_1, \dots, e_p) \in (\Z/p\Z)^p$, we define $\mu(\ue) =(\mu_i)_{i \in \Z/p\Z} \in \cP$ by
$$
  \mu_i := |\{ j \in \{ 1, \dots, p \}~|~ e_j = i ~\text{(in $\Z/p\Z$)} \}|\;.
$$
This defines a map $\mu : (\Z/p\Z)^p \to \cP$.  It is easy to check that $\mu \circ s = \id_{\cP}$.  (In
particular, $s$ is injective and $\mu$ is surjective.)  Moreover, for $\ue=(e_1, \dots, e_p), \ue'=(e_1', \dots,
e_p') \in (\Z/p\Z)^p$ one has $\mu(\ue)=\mu(\ue')$ if and only if there exists $\sigma \in \fS_p$ such that $e_j =
e_{\sigma(j)}'$ for all $j=1, \dots, p$.  Finally, for $\mu(e_1, \dots, e_p) = (\mu_i)_{i \in \Z/p\Z}$ and $c \in
\Z/p\Z$, we have $\mu(e_1 + c, \dots, e_p + c)=(\mu_{i+c})_{i \in \Z/p\Z}$.  (1) is a consequence of these
observations.

Suppose now that $p$ is prime.  Then the equivalence class of $(\mu_i)_{i \in \Z/p\Z} \in \cP$ contains either $p$
elements or only one element; the latter occurs only in the case $\mu_i=0$ for all $i$.  This shows (2).
\end{proof}

\subsection{Fixed points}\label{sec:fixed_points}
Let $\Delta \in \cR$.  We define
\begin{align*}
  \sigma_{\Delta}:& M \to M, \quad\sigma_{\Delta}(L(x)):=\Ad(\Delta^{-1})(\tau(L(x)))=\tau(\Ad(\Delta^{-1})(L(x)))\;,\\
  M^{\Delta} &:=\{ L(x) \in M ~|~ \sigma_{\Delta}(L(x))=L(x) \} ~~(\subset M')\;,\\
  M_{\ir}^{\Delta} &:= M^{\Delta} \cap M_{\ir}'\;.
\end{align*}
Thus, $\sigma_\Delta$ is an automorphism of $M$ of order $p$ and $M^\Delta$ is its fixed point locus, which is a
linear subspace of $M$. For any $g \in PGL_p(\C)$, we have that $g \Delta g^{-1} \in \cR$ and we have a linear
isomorphism
\begin{equation}\label{eq:delta-iso}
  M^{\Delta} \cong M^{g \Delta g^{-1}}, \qquad L(x) \mapsto \Ad(g)(L(x))\;.
\end{equation}
Now by Lemma \ref{lem:cR}, there is a unique $\ue \in \cE$ such that $\Delta = g \Delta_{\ue} g^{-1}$ for some $g
\in PGL_p(\bbC)$.  Hence we get an isomorphism
\begin{equation}\label{eq:delta-iso2}
  M^{\Delta_{\ue}} \cong M^{\Delta}
\end{equation}
which restricts to $M^{\Delta_{\ue}}_{\ir} \cong M^{\Delta}_{\ir}$.  In Lemma \ref{lem:mdelta} below, we provide an
explicit description of $M^{\Delta_{\ue}}$ for all $\ue \in (\Z/p\Z)^p$.

To state it, we need some more notation.  For $i, j \in \Z, ~ 1 \leq i, j \leq p$, we use the standard notation
$E_{ij} \in \Mat_p(\bbC)$ for the $p \times p$ matrix whose only non-zero entry is its $(i,j)$-th entry, which is
$1$.  For $\ue=(e_1, \dots, e_p) \in (\Z/p\Z)^p$ and $k \in \Z$, we define a subspace of $\Mat_p(\bbC)$ by
$$ 
  D_{\ue, k}:=\bigoplus_{e_i-e_j \equiv k \atop \pmod p}\bbC E_{ij}\;, 
$$
where $i, j$ run through all $1 \leq i, j \leq p$ such that $e_i-e_j \equiv k \pmod p$.

\begin{lemma}\label{lem:mdelta}
  Let $\ue = (e_1, \dots, e_p)  \in (\Z/p\Z)^p$. Then we have
  \begin{equation}\label{eq:mdelta}
    M^{\Delta_{\ue}}=\bigoplus_{k=0}^{d} D_{\ue, k} x^{k}\;.
  \end{equation}
\end{lemma}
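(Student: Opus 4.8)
The plan is to carry out a direct entrywise computation of the automorphism $\sigma_{\Delta_{\ue}}$ and then read off its fixed points by comparing coefficients of powers of $x$. First I would fix the diagonal representative $\tilde{\Delta}_{\ue} = \diag(\zeta^{e_1}, \dots, \zeta^{e_p}) \in GL_p(\C)$ of $\Delta_{\ue}$; since $\Ad$ factors through $PGL_p(\C)$ the scalar ambiguity in this choice is irrelevant, and $\tilde{\Delta}_{\ue}^p = \bbI_p$ confirms $\Delta_{\ue} \in \cR$, so that $\sigma_{\Delta_{\ue}}$ is defined. For $L(x) = (\ell_{ij}(x))$ the map $\tau$ replaces $x$ by $\zeta x$ entrywise, while conjugation by the diagonal matrix $\tilde{\Delta}_{\ue}$ only rescales entries; a one-line computation then gives
\[
  \sigma_{\Delta_{\ue}}(L(x))_{ij} = (\tilde{\Delta}_{\ue}^{-1}\,\tau(L(x))\,\tilde{\Delta}_{\ue})_{ij} = \zeta^{e_j - e_i}\,\ell_{ij}(\zeta x)\;.
\]

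Next I would translate the fixed-point condition $\sigma_{\Delta_{\ue}}(L(x)) = L(x)$ into a condition on the coefficients $\ell_{ij}^k$ of $\ell_{ij}(x) = \sum_{k=0}^d \ell_{ij}^k x^k$. The displayed identity shows that $L(x) \in M^{\Delta_{\ue}}$ if and only if $\zeta^{e_j - e_i}\,\ell_{ij}(\zeta x) = \ell_{ij}(x)$ for all $i, j$. Expanding both sides in powers of $x$ and comparing the coefficient of $x^k$ yields, for each triple $(i, j, k)$, the scalar equation $\zeta^{e_j - e_i + k}\,\ell_{ij}^k = \ell_{ij}^k$. As $\zeta$ is a primitive $p$-th root of unity, this forces $\ell_{ij}^k = 0$ unless $\zeta^{e_j - e_i + k} = 1$, i.e.\ unless $e_i - e_j \equiv k \pmod p$.

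Finally I would repackage this coefficient condition as the asserted direct-sum decomposition. The surviving monomials are exactly the $\ell_{ij}^k E_{ij}\, x^k$ with $e_i - e_j \equiv k \pmod p$; collecting, for a fixed $k$, all indices $(i,j)$ with $e_i - e_j \equiv k \pmod p$ produces precisely the space $D_{\ue, k}$, and summing over $0 \leq k \leq d$ gives \eqref{eq:mdelta}. Conversely, every element of $\bigoplus_{k=0}^d D_{\ue,k} x^k$ manifestly satisfies the coefficient condition, so the two spaces coincide.

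I do not expect a genuine obstacle: the content is the computation of a conjugation by a diagonal matrix followed by a coefficient comparison. The only point requiring care is the bookkeeping of exponents---getting the sign of $e_i - e_j$ in the adjoint action right and matching it against the convention $e_i - e_j \equiv k \pmod p$ defining $D_{\ue, k}$---together with the (trivial) check that the two expressions given for $\sigma_{\Delta_{\ue}}$ yield the same condition, which holds because $\tau$ acts only on $x$ and conjugation by $\tilde{\Delta}_{\ue}$ only on matrix entries, so the two operations commute.
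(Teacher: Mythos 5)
Your proof is correct and is essentially the paper's argument: both amount to observing that the monomials $E_{ij}x^k$ diagonalize $\sigma_{\Delta_{\ue}}$ (the paper records the eigenvalues $\zeta^k$ of $\tau$ and $\zeta^{e_i-e_j}$ of $\Ad(\Delta_{\ue})$ on $E_{ij}x^k$; you compute the same eigenvalue $\zeta^{e_j-e_i+k}$ entrywise and compare coefficients), and the fixed space is spanned by the eigenvectors with eigenvalue $1$, i.e.\ those with $k\equiv e_i-e_j \pmod p$. Your sign bookkeeping matches the paper's convention for $D_{\ue,k}$, so nothing is missing.
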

\begin{proof}
By the definition of $\tau$ and $\Delta_{\ue}$, one sees
$$ 
  \tau(E_{ij}x^k)=\zeta^k E_{ij}x^k\;,\qquad\Ad(\Delta_{\ue})(E_{ij}x^k)=\zeta^{e_i-e_j} E_{ij}x^k\;,
$$
from which \eqref{eq:mdelta} follows.
\end{proof}

We extract a detailed description for the two special cases as a corollary.
\begin{cor}\
  \begin{enumerate}
    \item For $\uz = (0, 0, \dots, 0)$, we have
      \begin{equation*}
        M^{\Delta_{\uz}}=\bigoplus_{k=0}^{\lfloor d/p \rfloor} {\Mat}_p(\bbC)x^{pk}\;,
      \end{equation*}
      where $\lfloor \cdot \rfloor$ denotes the floor function.  In particular, the dimension of $M^{\Delta_{\uz}}$
      is $({\lfloor d/p \rfloor}+1)p^2$.
    \item For $\uo = (0, 1, 2, \dots, p-1)$, we have
      \begin{equation*}
        M^{\Delta_{\uo}}=\bigoplus_{k=0}^d\bigoplus_{1 \leq i, j \leq p \atop i-j \equiv k \pmod p}\bbC \cdot E_{ij}x^k\;.
      \end{equation*}
      In particular, its dimension is $(d+1)p$.
  \end{enumerate}
\end{cor}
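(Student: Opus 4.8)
The plan is to derive both formulas directly from Lemma~\ref{lem:mdelta} by specializing $\ue$ and computing each summand $D_{\ue,k}$ explicitly, after which the dimension counts become elementary. No auxiliary construction is needed; everything reduces to inspecting the congruence conditions $e_i-e_j\equiv k\pmod p$ that define $D_{\ue,k}$.

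For part~(1) I would set $\ue=\uz$, so that every $e_i$ equals $0$ and hence $e_i-e_j\equiv 0\pmod p$ for all $i,j$. Consequently $D_{\uz,k}$ equals the full matrix algebra $\Mat_p(\bbC)$ when $k\equiv 0\pmod p$ and vanishes otherwise. Substituting this into \eqref{eq:mdelta}, only the indices $k\in\{0,p,2p,\dots\}$ with $k\leq d$ contribute, which are precisely $k=pk'$ for $0\leq k'\leq \lfloor d/p\rfloor$; this yields the stated decomposition. The dimension is then $(\lfloor d/p\rfloor+1)$ copies of a $p^2$-dimensional space, i.e.\ $(\lfloor d/p\rfloor+1)p^2$.

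For part~(2) I would set $\ue=\uo=(0,1,\dots,p-1)$, so that $e_i=i-1$ and therefore $e_i-e_j=i-j$. The congruence $e_i-e_j\equiv k\pmod p$ defining $D_{\uo,k}$ thus becomes $i-j\equiv k\pmod p$, and feeding this into \eqref{eq:mdelta} gives the claimed formula. For the dimension I would count, for each fixed $k$, the pairs $(i,j)$ with $1\leq i,j\leq p$ satisfying $i-j\equiv k\pmod p$: for each of the $p$ choices of $j$ there is a unique $i\in\{1,\dots,p\}$ with $i\equiv j+k\pmod p$, so there are exactly $p$ such pairs, independent of $k$. Summing over the $d+1$ values $k=0,\dots,d$ gives total dimension $(d+1)p$.

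I expect no real obstacle here: the corollary is a direct unwinding of Lemma~\ref{lem:mdelta}, and the only step warranting a sentence of justification is the observation that the number of index pairs in each graded piece of part~(2) equals $p$, which follows from the bijectivity of the map $j\mapsto (j+k \bmod p)$ on $\{1,\dots,p\}$.
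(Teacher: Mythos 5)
Your proposal is correct and is exactly the argument the paper intends: the paper's own proof of this corollary is the one-line remark that it ``follows immediately from the lemma,'' namely Lemma~\ref{lem:mdelta}, and your specialization of the congruence condition $e_i-e_j\equiv k\pmod p$ to $\uz$ and $\uo$, together with the counting of index pairs, is precisely the omitted unwinding. Nothing is missing and no step fails; you have simply written out the details the paper leaves to the reader.
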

\begin{proof}
This follows immediately from the lemma.
\end{proof}

\subsection{A decomposition of $M'_{\ir}$}
Let $\ue \in \cE$.  We define
\begin{align}\label{eq:m-ue}
  &M^{\ue} := \{ L(x) \in M ~|~ \sigma_{\Delta}(L(x))=L(x) ~\text{for some}~ \Delta \in \cR_{\ue} \}\subset M'\;,\\
  \label{eq:m-ue-ir}&M^{\ue}_{\ir} := M^{\ue} \cap M_{\ir}'\;.
\end{align}
They are stable under the action of $PGL_p(\C)$ (see \eqref{eq:delta-iso}).  By Lemma \ref{lem:cR} and the above
definitions, there are disjoint union decompositions
\begin{equation}\label{eq:decomp-mue}
  M_{\ir}' = \bigsqcup_{\ue \in \cE} M_{\ir}^{\ue}\;,\qquad M_{\ir}^{\ue} = \bigsqcup_{\Delta \in \cR_{\ue}} M^{\Delta}_{\ir}\;.
\end{equation}
For each $\alpha \in PGL_p(\C)$, we define
\begin{equation}\label{eq:g-alpha}
  G_{\alpha} := \{ g \in PGL_p(\bbC) ~|~ g \alpha g^{-1} = \alpha \}\;,
\end{equation}
the centralizer of $\alpha$, which is a subgroup of $PGL_p(\bbC)$.
\begin{lemma}\label{lem:m-delta-vs-m-ue}
If $\ue \in \cE$ and $\Delta \in \cR_{\ue}$, then the inclusion $M^{\Delta}_{\ir} \hookrightarrow M^{\ue}_{\ir}$
induces an isomorphism
$$ 
  M^{\Delta}_{\ir}/G_{\Delta} \cong M^{\ue}_{\ir}/PGL_p(\bbC)\;. 
$$
\end{lemma}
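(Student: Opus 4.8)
The plan is to prove that the map
$$
\phi : M^{\Delta}_{\ir}/G_{\Delta} \longrightarrow M^{\ue}_{\ir}/PGL_p(\bbC)
$$
induced by the inclusion $M^{\Delta}_{\ir} \hookrightarrow M^{\ue}_{\ir}$ is a bijection, by exploiting that $M^{\ue}_{\ir}$ fibers over the single conjugacy class $\cR_{\ue}$ of $\Delta$, with $M^{\Delta}_{\ir}$ being the fiber over $\Delta$ itself. The whole argument is the standard ``associated bundle'' identification $\text{(fiber)}/\text{(stabilizer)} \cong \text{(total space)}/\text{(group)}$, and the only inputs needed are the uniqueness statement of Lemma \ref{lem:stab} and the equivariance of the map $L(x)\mapsto g_{L(x)}$ of \eqref{eq:map-m-cr}.

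First I would make explicit the meaning of the pieces in the decomposition \eqref{eq:decomp-mue}. For $L(x) \in M_{\ir}'$, Lemma \ref{lem:stab} ensures that the element $g_{L(x)} \in \cR$ characterized by $\tau(L(x)) = \Ad(g_{L(x)})(L(x))$ is unique; unwinding the definition of $\sigma_{\Delta'}$ then shows that $L(x) \in M^{\Delta'}_{\ir}$ if and only if $g_{L(x)} = \Delta'$. Hence the decomposition $M^{\ue}_{\ir} = \bigsqcup_{\Delta' \in \cR_{\ue}} M^{\Delta'}_{\ir}$ is precisely the fiber decomposition of the map \eqref{eq:map-m-cr}, $L(x) \mapsto g_{L(x)}$; this is what makes the union disjoint and will be used repeatedly. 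The key computation is the equivariance of this map: because $\tau$ commutes with the $PGL_p(\bbC)$-action and conjugation preserves irreducibility of the characteristic polynomial, the uniqueness in Lemma \ref{lem:stab} forces
$$
g_{\Ad(g)(L(x))} = g\, g_{L(x)}\, g^{-1} \qquad (g \in PGL_p(\bbC)).
$$

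With these two facts, well-definedness and surjectivity are quick. The left-hand quotient makes sense because $G_{\Delta}$ preserves $M^{\Delta}_{\ir}$ (this is \eqref{eq:delta-iso} restricted to the case $g\Delta g^{-1}=\Delta$), and $\phi$ is automatically well-defined since $G_{\Delta} \subseteq PGL_p(\bbC)$. For surjectivity, take $L(x) \in M^{\ue}_{\ir}$ with $g_{L(x)} = \Delta' \in \cR_{\ue}$; since $\cR_{\ue}$ is a single conjugacy class, choose $g$ with $g\Delta' g^{-1} = \Delta$, so that $g_{\Ad(g)(L(x))} = \Delta$ by the displayed identity, giving $\Ad(g)(L(x)) \in M^{\Delta}_{\ir}$ in the same $PGL_p(\bbC)$-orbit as $L(x)$.

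The step I regard as the crux is injectivity. Suppose $L_1(x), L_2(x) \in M^{\Delta}_{\ir}$ satisfy $L_2(x) = \Ad(g)(L_1(x))$ for some $g \in PGL_p(\bbC)$. Applying the equivariance identity with $g_{L_1(x)} = \Delta$ yields $g_{L_2(x)} = g\Delta g^{-1}$; but $L_2(x) \in M^{\Delta}_{\ir}$ forces $g_{L_2(x)} = \Delta$, so $g\Delta g^{-1} = \Delta$, that is $g \in G_{\Delta}$ by \eqref{eq:g-alpha}. Thus $L_1(x)$ and $L_2(x)$ already lie in the same $G_{\Delta}$-orbit, proving $\phi$ injective. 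Finally, since all the maps involved are induced by the algebraic action of $PGL_p(\bbC)$ and its subgroup $G_{\Delta}$, this bijection is compatible with the quotient structures, yielding the asserted isomorphism.
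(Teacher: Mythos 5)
Your proposal is correct and takes essentially the same route as the paper: surjectivity comes from the fact that any $\Delta'\in\cR_{\ue}$ is conjugate to $\Delta$ (the paper's \eqref{eq:delta-iso2} combined with \eqref{eq:decomp-mue}), and injectivity reduces to Lemma \ref{lem:stab}. The only cosmetic difference is that you package the key computation as the equivariance identity $g_{\Ad(g)(L(x))}=g\,g_{L(x)}\,g^{-1}$ for the classifying map \eqref{eq:map-m-cr}, whereas the paper carries out the same chain of identities inline (showing $\Ad(g\Delta)(L_2(x))=\Ad(\Delta g)(L_2(x))$) before invoking Lemma \ref{lem:stab}.
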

\begin{proof}
The surjectivity follows from \eqref{eq:delta-iso2} and \eqref{eq:decomp-mue}.  To show the injectivity, we take
$L_i(x) \in M^{\Delta}_{\ir} ~(i=1, 2)$ and $g \in PGL_p(\bbC)$ satisfying $L_1(x)=\Ad(g)(L_2(x))$.  Then we have
\begin{align*}
  \Ad(g \Delta)(L_2(x))&= \Ad(g)(\Ad(\Delta)(L_2(x)))= \Ad(g)(\tau(L_2(x)))\\
  &= \tau(\Ad(g)(L_2(x)))= \tau(L_1(x))= \Ad(\Delta)(L_1(x))\\
  &= \Ad(\Delta)(\Ad(g)(L_2(x)))= \Ad(\Delta g)(L_2(x)))\;.
\end{align*}
By Lemma \ref{lem:stab}, we get $g \in G_{\Delta}$. This completes the proof.
\end{proof}

For $k \in \Z/p\Z$, we define $D_k^*$ to be the subset of $PGL_p(\bbC)$ consisting of all elements represented by
$d=(d_{ij}) \in GL_p(\bbC)$ such that $d_{ij}=0$ if $i-j \not\equiv k \pmod p$ and that $d_{ij}\not=0
~\text{if}~i-j \equiv k \pmod p$.

\begin{lemma}\
\label{lem:gdelta}
\begin{enumerate}
\item
For $\uz = (0, 0, \dots, 0) \in \cE$, we have $G_{\Delta_{\uz}} = PGL_p(\bbC)$.
\item
For $\uo = (0, 1, 2, \dots, p-1) \in \cE$, we have
\begin{equation*}
  G_{\Delta_{\uo}}=\bigsqcup_{k=0}^{p-1} D_{k}^*\;,
\end{equation*}%
Each of $D_0^*, \dots, D_{p-1}^*$ is a connected component of $G_{\Delta_{\uo}}$, and $D_0^*$ is a subgroup of
index $p$ in $G_{\Delta_{\uo}}$.
\end{enumerate}
\end{lemma}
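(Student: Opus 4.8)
The plan is to handle the two cases separately, the first being immediate and the second requiring a short entry-wise computation followed by a routine topological argument. For (1), observe that $\Delta_{\uz}$ is the class in $PGL_p(\bbC)$ of $\diag(\zeta^0, \dots, \zeta^0) = \bbI_p$, i.e. the identity element, whose centralizer is the whole group $PGL_p(\bbC)$; there is nothing more to prove.

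For (2), I would fix the representative $\tilde\Delta := \diag(\zeta^0, \zeta^1, \dots, \zeta^{p-1}) \in GL_p(\bbC)$ of $\Delta_{\uo}$. An element $g$ represented by $\tilde g = (d_{ij}) \in GL_p(\bbC)$ lies in $G_{\Delta_{\uo}}$ if and only if $\tilde g \tilde\Delta \tilde g^{-1} = c \tilde\Delta$ for some $c \in \bbC^*$, equivalently $\tilde g \tilde\Delta = c \tilde\Delta \tilde g$. Comparing $(i,j)$-entries gives $d_{ij}\zeta^{\,j-1} = c\,\zeta^{\,i-1} d_{ij}$ for all $i,j$, so $d_{ij} = 0$ unless $c = \zeta^{\,j-i}$. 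Since $\tilde g$ is invertible it has a nonzero entry, which forces $c$ to be a power of $\zeta$, say $c = \zeta^{-k}$ with $k \in \Z/p\Z$; then the nonzero entries of $\tilde g$ are confined to the positions $i - j \equiv k \pmod p$. For each fixed $k$ these positions form a single permutation pattern, so invertibility of $\tilde g$ in fact forces \emph{every} entry on this pattern to be nonzero. This last point, which upgrades the mere containment ``support $\subseteq \{i-j\equiv k\}$'' coming from the centralizer equation into the equality defining $D_k^*$, is the one place where invertibility is essential, and it is the main (if modest) obstacle in the argument. This shows $g \in D_k^*$, and conversely every element of $D_k^*$ satisfies the centralizer equation with $c = \zeta^{-k}$; hence $G_{\Delta_{\uo}} = \bigsqcup_{k=0}^{p-1} D_k^*$, the union being disjoint because the supports for distinct $k$ are disjoint.

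Finally I would record the group-theoretic and topological structure through the assignment $g \mapsto c$. The scalar $c$ is independent of the chosen representative $\tilde g$ and may be read off as a single entry of the continuously varying matrix $\tilde g \tilde\Delta \tilde g^{-1}$, so it defines a continuous map $c : G_{\Delta_{\uo}} \to \mu_p$, which is moreover a group homomorphism (since conjugating by a product composes the scalars). Its kernel is $\{ g ~|~ \tilde g \text{ commutes with } \tilde\Delta \}$; as $\tilde\Delta$ has distinct eigenvalues this is precisely the class of diagonal matrices, namely $D_0^*$, which is thus a subgroup, and surjectivity of $c$ follows by exhibiting the class of the cyclic permutation matrix $e_j \mapsto e_{j+1}$, for which a direct computation gives $c = \zeta^{-1}$, a generator of $\mu_p$; hence $D_0^* = \ker c$ has index $p$. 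For the component statement, each fiber $D_k^* = c^{-1}(\zeta^{-k})$ is open and closed in $G_{\Delta_{\uo}}$ because $\mu_p$ is discrete and $c$ is continuous, while $D_k^*$ is itself connected, being the image of $(\bbC^*)^p$ under the continuous surjection sending a tuple of nonzero scalars to the matrix supported on the pattern $i-j \equiv k$ (equivalently $D_k^* \cong (\bbC^*)^{p-1}$). Therefore each $D_k^*$ is a connected component, completing the proof.
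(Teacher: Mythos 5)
Your proof is correct and takes essentially the same route as the paper's (extremely terse) proof, whose entire content is the entry-wise identity $\Ad(\Delta_{\uo})\left((a_{ij})\right)=(\zeta^{i-j}a_{ij})$ --- the same computation you perform, with the roles of $g$ and $\Delta_{\uo}$ in the conjugation merely interchanged. You additionally spell out the details the paper declares obvious (the permutation-pattern invertibility point, the homomorphism $g \mapsto c$ onto $\mu_p$ with kernel $D_0^*$, and the connectedness of each $D_k^*$ as a quotient of $(\bbC^*)^p$), all of which are correct.
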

\begin{proof}
(1) is obvious and (2) follows from $\Ad(\Delta_{\uo})(a_{ij})=(\zeta^{i-j} a_{ij})$ for any $(a_{ij}) \in
  \Mat_p(\bbC)$.
\end{proof}

\section{The momentum map and its generic fibers}
%
We fix arbitrary integers $p \geq 2, ~d \geq 1$.

\subsection{The level sets}\label{sect:levelsets}
We write $V =V_{p, d} \subset \bbC[x, y]$ for the affine space of all polynomials of the form
$$ 
  y^p + s_1(x) y^{p-1} + \cdots
  + s_p(x)\;, \quad \deg(s_i(x)) \leqs id ~(i=1, \dots, p)\;.
$$
The characteristic polynomial $\det(y \bbI_p - L(x))$ of any element $L(x)$ of $M=M_{p, d}$ belongs to $V$,
therefore we obtain a map
$$
  \chi : M \to V, \qquad \chi(L(x)) := \det(y \bbI_p - L(x))\;.
$$
Since conjugate matrices have the same characteristic polynomial, $\chi$ induces a map
\begin{equation}\label{eq:chitilde}
  \tilde{\chi} : M/PGL_p(\bbC) \to V\;,\quad\tilde{\chi}([L(x)]) = \chi(L(x))\;.
\end{equation}
As we will recall in Section \ref{sec:aci}, $\tilde{\chi}$ is the momentum map of the Beauville system, so it is
fundamental in this paper.

Let $P=P(x, y) \in V$ and write $P = y^p + s_1(x) y^{p-1} + \cdots + s_p(x)$ with $s_i(x) \in \C[x], ~\deg(s_i(x))
\leq id$.  For $i=1, \dots, p$, we denote by $s_i(\infty) \in \C$ the coefficient of $x^{di}$ in $s_i(x)$, and we put
$P(\infty, y) := \sum_{i=0}^p s_i(\infty) y^{p-i}$.  For $L(x) = \sum_{k=0}^d L_k x^k \in M ~(L_k \in \Mat_p(\C))$, we
write $L(\infty) := L_d \in \Mat_p(\C)$.  Note that $\chi(L)(\infty, y) = \det(y \bbI_p - L(\infty))$ for any $L(x)
\in M$.  For any $P \in V$ we write $M_P$ for the fiber of $\chi$ over $P$,
$$
  M_P := \{ L(x) \in M ~|~ \chi(L(x))=P \}\;.
$$
Then $M_P$ is stable under the action of $PGL_p(\C)$ on $M$.  We are going to describe $M_P/PGL_p(\C)$ in terms of
Jacobian varieties under some assumptions on $P$.

\subsection{Spectral curve}\label{sect:spec-curve}
Fix $P(x, y) \in V$, and let us introduce another polynomial $P'(z,w):=z^{pd} P(z^{-1},z^{-d} w) \in
\mathbb{C}[z,w]$.  We define a projective curve $C_P$ over $\C$ by gluing two affine curves $\Spec \C[x, y]/(P(x,
y))$ and $\Spec \C[z, w]/(P'(z, w))$ by the relation $x=z^{-1}, ~y=z^{-d} w$.  The rational function $x=z^{-1}$ on
$C_P$ defines a finite morphism $\pi_P : C_P \to \bbP^1$ of degree $p$.

We define open subsets of $V$ as follows:
\begin{align*}
  &V_{\ir} := \{ P(x, y) \in V ~|~ C_P ~\text{is integral} ~\}\;,\\
  &V_{\sm} := \{ P(x, y) \in V_{\ir} ~|~ C_P ~\text{is smooth} ~\}\;,\\
  &V_{\spl} := \{ P(x, y) \in V_{\sm} ~|~ \pi_P ~\text{is unramified at}~\infty \in \bbP^1 ~ \}\;.
\end{align*}

\goodbreak

\begin{remark}\ 
  \begin{enumerate}
    \item
      Let $P \in V$.  One has that $P \in V_{\ir}$ if and only if $P$ is irreducible.  If this is the case, Lemma
      \ref{lem:stab} shows that $PGL_p(\C)$ acts freely on $M_P$.
    \item For $P \in V_{\sm}$, one has that $P \in V_{\spl}$ if and only if $P(\infty, y)$ has no multiple root.
  \end{enumerate}
\end{remark}

We will need the following results:

\begin{lemma}\cite[(1.2)]{B}
\label{lem:genus}
Let $P \in V_{\sm}$.
Then the genus of $C_P$ is $g_P := \frac{1}{2}(p-1)(pd-2)$.
\end{lemma}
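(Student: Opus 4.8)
The plan is to realize $C_P$ as a smooth divisor on a rational ruled surface and read off its genus by adjunction; this handles all $P \in V_{\sm}$ uniformly, whether or not $\pi_P$ is ramified over $\infty$. The gluing of the two affine charts $\Spec \C[x,y]/(P)$ and $\Spec \C[z,w]/(P')$ by $x = z^{-1},\ y = z^{-d}w$ is precisely the transition law $w = x^{-d}y$ for the fibre coordinate of the line bundle $\mathcal{O}_{\bbP^1}(d)$. Hence $C_P$ lives naturally in the total space of $\mathcal{O}_{\bbP^1}(d)$, and since $P$ is monic in $y$ the curve never meets the section $y=\infty$. I therefore take $S := \mathbb{F}_d$, the Hirzebruch surface compactifying $\mathrm{Tot}(\mathcal{O}_{\bbP^1}(d))$, with projection $\pi : S \to \bbP^1$.

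First I would fix the numerical data on $S$. Writing $F$ for a fibre of $\pi$, $C_0$ for the zero-section (so that $C_0^2 = d$ and $C_0 \cdot F = 1$), and $C_\infty = C_0 - dF$ for the section at infinity (so $C_\infty^2 = -d$ and $C_0 \cdot C_\infty = 0$), the Picard group is freely generated by $C_0$ and $F$, and the canonical class is
$$ K_S = -2C_\infty - (d+2)F = -2C_0 + (d-2)F\;. $$
Next I would pin down the class of $C_P$. Since $\pi_P$ has degree $p$ we have $C_P \cdot F = p$, and since $C_P$ is disjoint from $C_\infty$ we have $C_P \cdot C_\infty = 0$; together these force
$$ C_P \sim p\,C_0\;. $$
Finally, because $C_P$ is smooth its geometric genus equals its arithmetic genus, so adjunction gives
$$ 2g_P - 2 = (K_S + C_P)\cdot C_P = \big((p-2)C_0 + (d-2)F\big)\cdot pC_0 = p\big((p-2)d + (d-2)\big) = p\big((p-1)d - 2\big)\;, $$
whence $g_P = \tfrac12(p-1)(pd-2)$, as claimed.

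The main obstacle is the bookkeeping of the surface geometry: correctly identifying $S$ with $\mathbb{F}_d$ from the gluing data, verifying $C_P \cap C_\infty = \varnothing$ (which uses monicity of $P$ in $y$), and computing $C_P \sim pC_0$ together with $K_S$ with the right signs --- the self-intersection conventions ($C_0^2 = +d$ versus $-d$) are the easiest place to slip. The appeal to adjunction uses the hypothesis $P \in V_{\sm}$, which guarantees $p_a(C_P) = g_P$. As an independent check, one can instead run Riemann--Hurwitz for $\pi_P : C_P \to \bbP^1$: since $g_{\bbP^1}=0$ this reads $2g_P - 2 = -2p + \deg R$, and a short computation of the weighted degree $pd(p-1)$ of $\mathrm{disc}_y P$ --- with normality of $\C[x,y]/(P)$ again coming from smoothness, identifying the discriminant with the different over the affine part and absorbing the shortfall into ramification over $\infty$ --- yields $\deg R = pd(p-1)$ and the same value of $g_P$.
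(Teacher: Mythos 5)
Your proof is correct, and the computation checks out ($2g_P-2=p\bigl((p-1)d-2\bigr)$ gives exactly $g_P=\tfrac12(p-1)(pd-2)$). Note, however, that the paper itself contains no proof of this lemma: it is quoted verbatim from Beauville \cite[(1.2)]{B}, and your argument --- identifying the gluing $y=z^{-d}w$ with the transition law of $\mathcal{O}_{\bbP^1}(d)$, embedding $C_P$ in the Hirzebruch surface $\mathbb{F}_d$ as a curve in the class $pC_0$ disjoint from $C_\infty$, and applying adjunction --- is in essence the standard proof given in the cited source. Two minor points you could make explicit: the two affine charts glue to a \emph{closed} curve in $\mathbb{F}_d$ because $P'(z,w)$ is again monic in $w$ (so $\pi_P$ is finite, hence $C_P$ is proper), and the identification $p_a(C_P)=g_P$ uses connectedness as well as smoothness, which is guaranteed since $V_{\sm}\subset V_{\ir}$, so $C_P$ is integral.
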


\begin{lemma}\cite[(1.12)]{B}
If $P \in V_{\sm}$, then $L(c)$ is regular for any $L(x) \in M_P$ and any $c \in \bbP^1$.  (Recall that $A \in
\Mat_p(\C)$ is called regular if its minimal polynomial is of degree $p$.)
\end{lemma}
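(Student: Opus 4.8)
The plan is to reduce the statement to one linear-algebra observation together with Jacobi's formula for the derivative of a determinant, and then to treat the point at infinity by passing to the second chart of $C_P$. First I would record the characterization of regularity via the adjugate: a matrix $A \in \Mat_p(\C)$ fails to be regular if and only if some eigenvalue $\lambda_0$ has geometric multiplicity at least $2$, i.e. $\mathop{\rm rank}\nolimits(\lambda_0 \bbI_p - A) \leq p-2$, which is equivalent to the vanishing of all $(p-1)\times(p-1)$ minors of $\lambda_0 \bbI_p - A$, that is, to $\mathop{\rm adj}\nolimits(\lambda_0 \bbI_p - A) = 0$, where $\mathop{\rm adj}\nolimits$ denotes the adjugate (transpose cofactor) matrix. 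Thus it suffices to show that if $L(c)$ is not regular, then $C_P$ has a singular point, contradicting $P \in V_{\sm}$.

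The heart of the argument is the following general claim, to be applied twice: if $A(x)$ is any $p \times p$ matrix with entries in $\C[x]$, if $R(x,y) := \det(y\bbI_p - A(x))$, and if $A(c)$ has an eigenvalue $\lambda_0$ with $\mathop{\rm adj}\nolimits(\lambda_0\bbI_p - A(c)) = 0$, then $(c,\lambda_0)$ is a singular point of the affine plane curve $R(x,y)=0$. To prove it I would apply Jacobi's formula $\frac{\partial}{\partial t}\det B(t) = \Tr(\mathop{\rm adj}\nolimits(B(t))\, B'(t))$ to $B(x,y) = y\bbI_p - A(x)$. Taking $\partial_y$ (where $\partial_y B = \bbI_p$) gives $\partial_y R = \Tr(\mathop{\rm adj}\nolimits(y\bbI_p - A(x)))$, and taking $\partial_x$ (where $\partial_x B = -A'(x)$) gives $\partial_x R = -\Tr(\mathop{\rm adj}\nolimits(y\bbI_p - A(x))\,A'(x))$. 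Evaluating both at $(c,\lambda_0)$ and using $\mathop{\rm adj}\nolimits(\lambda_0\bbI_p - A(c)) = 0$, both partial derivatives vanish; since also $R(c,\lambda_0) = \det(\lambda_0\bbI_p - A(c)) = 0$ because $\lambda_0$ is an eigenvalue, the point $(c,\lambda_0)$ is singular. Here I use that $P$, hence $R$, is irreducible, so the curve is reduced and the Jacobian criterion genuinely detects singularities.

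Applying this claim with $A = L$ settles every finite $c \in \bbP^1$: non-regularity of $L(c)$ would produce a singular point $(c,\lambda_0)$ on the chart $\Spec\C[x,y]/(P)$ of $C_P$. For $c = \infty$, I would pass to the other chart. A short manipulation shows $P'(z,w) = z^{pd} P(z^{-1}, z^{-d}w) = \det(w\bbI_p - \hat{L}(z))$, where $\hat{L}(z) := z^d L(z^{-1}) = \sum_{k=0}^d L_k z^{d-k}$ is again a polynomial matrix of degree $\leq d$ satisfying $\hat{L}(0) = L_d = L(\infty)$. Applying the claim with $A = \hat{L}$ at the finite value $z = 0$ shows that non-regularity of $L(\infty) = \hat{L}(0)$ would force a singular point $(0,\lambda_0)$ on the chart $\Spec\C[z,w]/(P')$ of $C_P$. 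In either case we contradict smoothness of $C_P$.

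I expect the only delicate point to be the treatment of $c=\infty$, precisely because $L(\infty)$ is defined as the leading coefficient $L_d$ rather than a naive evaluation. The clean identity $P'(z,w) = \det(w\bbI_p - \hat{L}(z))$ with $\hat{L}(0) = L(\infty)$ is what makes the infinity case a literal repetition of the finite case, and verifying this identity by extracting the factor $z^{-d}$ from inside the determinant is the one computation I would carry out in full.
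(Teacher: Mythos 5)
Your proof is correct, but note that the paper itself contains no proof of this lemma to compare against: it is quoted verbatim from Beauville \cite[(1.12)]{B}. Your reconstruction --- characterizing failure of regularity by the vanishing of the adjugate $\mathop{\rm adj}\nolimits(\lambda_0\bbI_p - L(c))$, applying Jacobi's formula to get $\partial_y\det(y\bbI_p-L(x))=\Tr\mathop{\rm adj}\nolimits(y\bbI_p-L(x))$ and $\partial_x\det(y\bbI_p-L(x))=-\Tr\bigl(\mathop{\rm adj}\nolimits(y\bbI_p-L(x))L'(x)\bigr)$, so that both partials vanish at $(c,\lambda_0)$, and then transferring the case $c=\infty$ to the second chart via the identity $P'(z,w)=\det\bigl(w\bbI_p - z^dL(z^{-1})\bigr)$ --- is essentially the classical argument underlying Beauville's statement, and it meshes correctly with this paper's conventions: the definition $L(\infty):=L_d$ in \S 4.1 is exactly $\hat{L}(0)$ for your $\hat{L}(z)=z^dL(z^{-1})$, and the gluing description of $C_P$ in \S 4.2 guarantees that a singular point on either affine chart contradicts $P\in V_{\sm}$. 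One minor remark: irreducibility of $P$ is not actually needed for your Jacobian-criterion step, since for a plane curve the simultaneous vanishing of $R$, $\partial_xR$, $\partial_yR$ at a point certifies non-smoothness of the scheme $\Spec\C[x,y]/(R)$ there whether or not $R$ is reduced; invoking it is harmless, though, because $V_{\sm}\subset V_{\ir}$ by definition.
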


\subsection{A result of Beauville}\label{sect:bea}
We fix $P \in V_{\sm}$.  For each $n \in \Z$, we set $\Pic^n(C_P):=\{ \cL \in \Pic(C_P) ~|~ \deg(\cL)=n \}$.
Recall that the Jacobian variety $J(C_P) = \Pic^0(C_P)$ has a structure of an abelian variety of dimension $g_P$,
and $\Pic^n(C_P)$ is a torsor under $J(C_P)$ for each $n \in \Z$.  We define the {\it theta divisor} by
$$
  \Theta_P := \{ \cL \in {\Pic}^{g_P-1}(C_P) ~|~H^0(C_P, \cL) \not= 0 \}\;.
$$
The following fundamental result is due to Beauville \cite[Thm. 1.4]{B}.
\begin{thm}[Beauville]\label{thm:bea}
  Let $P \in V_{\sm}$.  Then there is an isomorphism
  $$ M_P/PGL_p(\C) \cong {\Pic}^{g_P-1}(C_P) \setminus \Theta_P\;. $$
\end{thm}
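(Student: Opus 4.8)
The plan is to prove the isomorphism by means of the spectral (Beauville--Narasimhan--Ramanan) correspondence, which matches a matrix $L(x)$ with its sheaf of eigenvectors over the spectral curve $C_P$. First I would recast $M_P$ sheaf-theoretically. Writing $\cO=\cO_{\bbP^1}$, an element $L(x)\in M_P$ is exactly a homomorphism $\phi\colon \cO^p\to\cO^p(d)$ (equivalently a section of $\mathrm{End}(\cO^p)\otimes\cO(d)$, since the entries are polynomials of degree $\leq d$) whose characteristic polynomial is $P$, and the conjugation action of $PGL_p(\C)$ is the action of the gauge group $\mathrm{Aut}(\cO^p)=GL_p(\C)$ modulo scalars. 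Thus $M_P/PGL_p(\C)$ is the set of isomorphism classes of such pairs $(\cO^p,\phi)$; note also that since $P\in V_{\sm}\subset V_{\ir}$ is irreducible, Lemma \ref{lem:stab} guarantees that $PGL_p(\C)$ acts freely, so these isomorphism classes are well-behaved.

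For the forward map I would send $L(x)$ to the cokernel sheaf $\cL:=\Coker(\phi-y\cdot\id)$ on $C_P$, where $y$ denotes the tautological eigenvalue section of $\pi_P^*\cO(d)$. Since $P=\det(y\bbI_p-L(x))$ vanishes on $C_P$, this cokernel is supported on $C_P$; and because $P\in V_{\sm}$ makes $C_P$ smooth and, by the cited lemma ensuring that $L(c)$ is regular for every $c\in\bbP^1$, the cokernel is a rank-one torsion-free sheaf on the smooth curve $C_P$, hence a line bundle $\cL$. A Riemann--Roch computation (using $g_P=\frac12(p-1)(pd-2)$ from Lemma \ref{lem:genus}) fixes its degree, and after a fixed normalization one arranges $\cL\in\Pic^{g_P-1}(C_P)$. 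Conjugating $\phi$ by an element of $GL_p(\C)$ visibly produces an isomorphic cokernel, so the assignment descends to $M_P/PGL_p(\C)$.

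For the inverse map I would push forward: to $\cL\in\Pic^{g_P-1}(C_P)$ associate $E:=(\pi_P)_*\cL\otimes\cO(1)$ together with the endomorphism $\phi\colon E\to E(d)$ induced by multiplication by $y$; by construction of $C_P$ the characteristic polynomial of $\phi$ is again $P$. The crucial point is to decide when $E\cong\cO^p$, for only then does $(E,\phi)$, after a choice of trivialization, return a genuine element of $M_P$. Since $\pi_P$ is finite one has $H^i(\bbP^1,(\pi_P)_*\cL)=H^i(C_P,\cL)$ for $i=0,1$, and as $\deg\cL=g_P-1$ gives $\chi(\cL)=0$, Grothendieck's splitting theorem forces $(\pi_P)_*\cL\cong\cO(-1)^p$ exactly when $H^0(C_P,\cL)=0$ (equivalently $H^1(C_P,\cL)=0$); tensoring by $\cO(1)$ then yields $E\cong\cO^p$. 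By definition $H^0(C_P,\cL)=0$ is precisely the condition $\cL\notin\Theta_P$, so the image of the forward map is $\Pic^{g_P-1}(C_P)\setminus\Theta_P$ and the inverse is defined on all of it.

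Finally I would verify that the two constructions are mutually inverse by the standard reconstruction argument — recovering $E$ as $(\pi_P)_*$ of the cokernel and $\phi$ as multiplication by $y$ — which in particular yields both injectivity on orbits and surjectivity. To upgrade the resulting bijection of sets to an isomorphism of varieties I would carry out the cokernel and pushforward constructions in families (over a base parametrizing the $L(x)$, respectively the line bundles), so that both maps become morphisms. \textbf{The main obstacle} I anticipate lies in the degree and normalization bookkeeping: checking that the cokernel is genuinely a line bundle over the ramification points of $\pi_P$ (this is exactly where regularity of $L(c)$ is indispensable) and pinning down the twist so that the image lands in $\Pic^{g_P-1}$ rather than some other degree, so that the vanishing $H^0=0$ coincides on the nose with the theta divisor $\Theta_P$.
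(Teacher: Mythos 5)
Your proposal is correct and follows essentially the same route as the paper, which does not reprove this statement but cites it as \cite[Thm.~1.4]{B} and recalls precisely your inverse construction: $\cL\mapsto$ the matrix of multiplication by $y$ on $\pi_{P*}\cL\cong\cO_{\bbP^1}(-1)^p$, with the identification $\pi_{P*}\cL\cong\cO_{\bbP^1}(-1)^p$ holding exactly when $\cL\in\Pic^{g_P-1}(C_P)\setminus\Theta_P$ (your $\chi(\cL)=0$ plus Grothendieck-splitting argument). Your forward (cokernel/spectral) map, together with the use of regularity of $L(c)$ to guarantee the cokernel is a line bundle, is exactly the content of Beauville's original proof, so the two approaches coincide.
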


We briefly recall the construction of this isomorphism, following \cite{B}.  Fix $P \in V_{\sm}$ and take an
invertible sheaf $\cL$ of degree $g_P-1$ on $C_P$.  Then $\pi_{P *} \cL$ is a free $\cO_{\bbP^1}$-module of rank
$p$.  Moreover, it is isomorphic to $\cO_{\bbP^1}(-1)^p$ if and only if the class of $\cL$ belongs to
$\Pic^{g_P-1}(C_P) \setminus \Theta_P$.  Suppose this is the case and fix an isomorphism $\alpha : \pi_{P *} \cL
\cong \cO_{\bbP^1}(-1)^p$.  The morphism $\pi_{P *} \cL \to \pi_{P *} \cL \otimes \cO_{\bbP^1}(d)$ given by
``multiplication by $y$'' induces, via $\alpha$, a morphism $\cO_{\bbP^1}(-1)^p \to \cO_{\bbP^1}(d-1)^p$ which is
represented by a matrix $L_{(\cL, \alpha)}(x) \in M$.  The relation $P(x, y)=0$ imposes $L_{(\cL, \alpha)}(x) \in
M_P$.  If $\beta : \pi_{P *} \cL \cong \cO_{\bbP^1}(-1)^p$ is another isomorphism, then $L_{(\cL, \alpha)}(x)$ and
$L_{(\cL, \beta)}(x)$ define the same class in $M_P/PGL_p(\C)$.  The isomorphism in Theorem \ref{thm:bea}.  is
given by this correspondence.

\subsection{Fixed points of $\tau$}
For the rest of this section, we assume that $p$ is a prime number and that $d=d'p$ for some integer $d' \geq 1$.
We apply the results of the previous sections to $M=M_{p, d}$ and also to $M_{p, d'}$, and study their relation.
Thus, to clarify the distinction, we will, for instance, write $M_{p, d, P}$ for what has been written as $M_P$.
We set 
$$ 
  V_{p, d', \spl}' := \{ Q \in V_{p, d', \spl}~|~ Q(0, y)~\text{has no multiple root} \}\;.
$$
Note that for $Q \in V_{p, d', \spl}$ the two conditions $Q(x^p, y) \in V_{p, d, \sm}$ and $Q(x, y) \in V_{p, d',
\spl}'$ are equivalent, and if this is the case then $Q(x^p, y) \in V_{p, d, \spl}$.

Now we fix $Q \in V_{p, d', \spl}'$
and set $P:=Q(x^p, y) \in V_{p, d, \spl}$.
We define morphisms
$f_Q : C_P \to C_Q$
and
$f_{\bbP^1} : \bbP^1 \to \bbP^1$
by $f_Q(x, y) \mapsto (x^p, y)$
and $f_{\bbP^1}(x)=x^p$ so that
we obtain a commutative diagram
$$
\begin{matrix}
C_P & \overset{\pi_P}{\to} & \bbP^1
\\[1mm]
{}^{f_Q} \downarrow \quad 
& &  \quad \downarrow^{f_{\bbP^1}}
\\[1mm]
C_Q & \overset{\pi_Q}{\to} & \bbP^1\;.
\end{matrix}
$$
The branch locus of $f_Q$ is exactly
$\pi_Q^{-1}(\{ 0, \infty \})$,
hence we have
\begin{equation}\label{eq:no-branch-pt}
  \text{for any $Q \in V_{p, d', \spl}'$,~ the number of branch points of $f_Q$ is $2p$\;.}
\end{equation}
(This can be also seen by the Riemann-Hurwitz formula and Lemma \ref{lem:genus}).

The automorphism $\tau$ on $M_{p, d}$ (see \eqref{eq:tau}) restricts to an isomorphism on $M_{p, d, P}$.  Let
$M_{p, d}^{\tau} := \{ L(x) \in M_{p, d} ~|~ \tau(L)=L \}$ be the set of fixed points of $\tau$ on $M_{p, d}$.
Then $M_{p, d, P}^{\tau} := M_{p, d, P} \cap M_{p,d}^{\tau}$ is identified with $M_{p, d', Q}$ under the
correspondence
\begin{equation}\label{eq:p-th-power-map}
  M_{p, d', Q} \cong M_{p, d, P}^{\tau}\;,\qquad L(x) \mapsto L(x^p)\;.
\end{equation}
The composition of this map with the inclusion $M_{p, d, P}^{\tau} \hookrightarrow M_{p, d, P}$ induces the left
vertical map in the following commutative diagram:
\begin{equation}\label{eq:comm1}
  \begin{matrix}
    M_{p, d', Q}/PGL_p(\C)& \overset{\text{Thm. \ref{thm:bea}}}{\cong} &\Pic^{g_Q-1}(C_Q) \setminus \Theta_Q\\[1mm]
    {}_{\text{\eqref{eq:p-th-power-map}}}\downarrow \quad& &\downarrow\\[1mm] 
    M_{p, d, P}/PGL_p(\C)& \underset{\text{Thm. \ref{thm:bea}}}{\cong} &\Pic^{g_P-1}(C_P) \setminus \Theta_P\;,
  \end{matrix}
\end{equation}
where the right vertical map is defined as
$$ 
  \cL \mapsto {f_Q}^*(\cL) \otimes {\pi_P}^*(\cO_{\bbP^1}(p-1))\;. 
$$ 
As the left vertical map is obviously injective, we obtain the following result.

\begin{prop}\label{prop:inj}
  The pull-back map $f_Q^* : J(C_Q) \to J(C_P)$ is injective.
\end{prop}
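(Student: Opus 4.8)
The plan is to read off the injectivity of $f_Q^*$ from the commutative diagram \eqref{eq:comm1}. Its two horizontal maps are isomorphisms by Theorem~\ref{thm:bea}, and I claim the left vertical map is injective; granting this, commutativity forces the right vertical map
$$ R : \Pic^{g_Q-1}(C_Q) \setminus \Theta_Q \to \Pic^{g_P-1}(C_P) \setminus \Theta_P, \qquad \cL \mapsto f_Q^*(\cL) \otimes \pi_P^*(\cO_{\bbP^1}(p-1)) $$
to be injective as well, being the composite of the left vertical map with the two horizontal isomorphisms (and their inverses). Since $R$ is nothing but $f_Q^*$ followed by tensoring with the \emph{fixed} line bundle $\mathcal{M} := \pi_P^*(\cO_{\bbP^1}(p-1))$, its injectivity is very close to what we want, and the one remaining task will be to transfer injectivity on the open set $\Pic^{g_Q-1}(C_Q) \setminus \Theta_Q$ to injectivity of the group homomorphism $f_Q^* : J(C_Q) \to J(C_P)$.

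First I would dispose of the injectivity of the left vertical map, which is induced by $L(x) \mapsto L(x^p)$ (see \eqref{eq:p-th-power-map}) followed by the inclusion $M_{p,d,P}^{\tau} \hookrightarrow M_{p,d,P}$. If $L_1(x), L_2(x) \in M_{p,d',Q}$ satisfy $\Ad(g)(L_1(x^p)) = L_2(x^p)$ for some $g \in PGL_p(\C)$, then, $g$ being constant, the entries of $\Ad(g)(L_1(x^p))$ are polynomials in $x^p$; comparing coefficients of $x^{pk}$ on both sides gives $\Ad(g)(L_1(x)) = L_2(x)$, so $[L_1(x)] = [L_2(x)]$ in $M_{p,d',Q}/PGL_p(\C)$. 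This is the asserted injectivity.

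The core of the argument is the transfer step, and this is also where I expect the only real subtlety to lie. Suppose $a \in \Ker(f_Q^* : J(C_Q) \to J(C_P))$ with $a \neq 0$; I aim for a contradiction. The two avoidance conditions $\cL \notin \Theta_Q$ and $\cL \otimes a \notin \Theta_Q$ define nonempty Zariski-open subsets of the irreducible variety $\Pic^{g_Q-1}(C_Q)$ (a torsor under the abelian variety $J(C_Q)$), so their intersection is nonempty; fix $\cL$ in it. Then $\cL$ and $\cL \otimes a$ both lie in $\Pic^{g_Q-1}(C_Q) \setminus \Theta_Q$, they are distinct because $a \neq 0$, yet
$$ R(\cL \otimes a) = f_Q^*(\cL) \otimes f_Q^*(a) \otimes \mathcal{M} = f_Q^*(\cL) \otimes \mathcal{M} = R(\cL), $$
since $f_Q^*(a) = \cO_{C_P}$. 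This contradicts the injectivity of $R$, whence $\Ker(f_Q^*) = 0$. The main obstacle is precisely this density argument: injectivity of $R$ only on the complement of $\Theta_Q$ is \emph{a priori} weaker than injectivity of the homomorphism $f_Q^*$, and bridging the gap relies on the irreducibility of $\Pic^{g_Q-1}(C_Q)$ together with the fact that $\Theta_Q$ is a proper (divisorial) subvariety. The degenerate case $g_Q = 0$ is trivial, since then $J(C_Q) = 0$.
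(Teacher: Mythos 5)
Your proof is correct and is essentially the paper's own argument: the paper likewise reads off the injectivity of $f_Q^*$ from the commutative diagram \eqref{eq:comm1}, whose horizontal arrows are the isomorphisms of Theorem \ref{thm:bea} and whose left vertical arrow it declares obviously injective. The only difference is that you spell out two steps the paper leaves implicit --- the coefficient-comparison argument for injectivity of the left vertical map, and the density/translation argument transferring injectivity on $\Pic^{g_Q-1}(C_Q)\setminus\Theta_Q$ to injectivity of the homomorphism $f_Q^*$ on $J(C_Q)$ --- and both of these are carried out correctly.
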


Next we consider $M_{p, d, P}' := M_{p, d, P} \cap M' (\subset M_{\ir}')$.  It is obvious from the definition that
$M_{p, d, P}^{\tau} \subset M_{p, d, P}'$, but they do not coincide.  Actually, the decomposition
\eqref{eq:decomp-mue} restricts to
\begin{equation}\label{eq:decomp-mp'}
  M_{p, d, P}' = \bigsqcup_{\ue \in \cE} M_{p, d, P}^{\ue}\;,\quad M_{p, d, P}^{\ue} := M_{p, d, P} \cap M_{p, d}^{\ue}\;,
\end{equation}
and for $\uz = (0, \dots, 0) \in \cE$, we have $M_{p, d, P}^{\tau} = M_{p, d, P}^{\uz}$.  For general $\ue \in
\cE$, $M_{p, d, P}^{\ue}$ is nothing other than the inverse image of $\ue$ by the map
\begin{equation}\label{eq:map-mp-ce}
  M_{p, d, P}' \to \cE
\end{equation}
obtained by composing the inclusion $M_{p, d, P}' \hookrightarrow M_{\ir}'$ with \eqref{eq:map-m-ce}.  By
definition, \eqref{eq:map-mp-ce} is a continuous map, but $\cE$ is a finite discrete set.  It follows that $M_{p,
  d, P}^{\ue}$ is open and closed in $M_{p, d, P}'$, that is, a union of (a finite number of) connected components
of $M_{p, d, P}'$.

For each $\ue \in \cE$, we consider the subspace (see \eqref{eq:m-ue})
$$
  M_{p, d, P}^{\Delta_{\ue}}:= M_{p, d, P} \cap M_{p, d}^{\Delta_{\ue}}\;. 
$$
of $M_{p, d, P}$, which is stable under the action of $G_{\Delta_{\ue}}$.  The main result of this subsection is
the following:

\begin{thm}\label{thm:each-conn-comp-jac}
  Let $\ue \in \cE,~ Q \in V_{p, d', \spl}'$ and set $P:=Q(x^p, y) \in V_{p, d, \spl}$.  Then every connected
  component of $M_{p, d, P}^{\Delta_{\ue}}/G_{\Delta_{\ue}}$ is isomorphic to an affine open subset of $J(C_Q)$.
\end{thm}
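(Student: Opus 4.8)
The plan is to relate the space $M_{p, d, P}^{\Delta_{\ue}}/G_{\Delta_{\ue}}$ to the fixed-point set of the automorphism $\tau$ acting on the Beauville phase space, and then to identify the connected components of that fixed-point set with fibers of the pull-back map $f_Q^*$, which Theorem \ref{thm:conn-comp} describes precisely. First I would invoke Lemma \ref{lem:m-delta-vs-m-ue} to replace the quotient $M_{p, d, P}^{\Delta_{\ue}}/G_{\Delta_{\ue}}$ by $M_{p, d, P}^{\ue}/PGL_p(\C)$, since for $\Delta \in \cR_{\ue}$ these are isomorphic (the lemma's isomorphism visibly respects the fiber over $P$). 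The decomposition \eqref{eq:decomp-mp'} then shows that $M_{p, d, P}^{\ue}/PGL_p(\C)$ is a union of connected components of $M_{p, d, P}'/PGL_p(\C)$, so it suffices to understand the connected components of the latter.

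The key geometric step is to translate $M_{p, d, P}'/PGL_p(\C)$ through Beauville's isomorphism (Theorem \ref{thm:bea}). Under the identification $M_{p, d, P}/PGL_p(\C) \cong \Pic^{g_P-1}(C_P) \setminus \Theta_P$, the automorphism $\tau$ on the matrix side corresponds to an automorphism of $\Pic^{g_P-1}(C_P)$ induced by the covering automorphism generating $T = \Gal(\bbC(C_P)/\bbC(C_Q))$; indeed $\tau(L(x)) = L(\zeta x)$ exactly mirrors the deck transformation $(x, y) \mapsto (\zeta^{-1} x, y)$ of $C_P$ over $C_Q$. Therefore $M_{p, d, P}'/PGL_p(\C)$, being the locus where $[\tau(L(x))] = [L(x)]$, maps isomorphically onto the $\tau$-fixed locus $(\Pic^{g_P-1}(C_P) \setminus \Theta_P)^T$. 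Since $\Pic^{g_P-1}(C_P)$ is a torsor under $J(C_P)$, its $T$-fixed locus is a torsor under (a union of translates of) $J(C_P)^T$, and the connected components are precisely the $f_Q^*(J(C_Q))$-cosets inside $J(C_P)^T$. By Proposition \ref{prop:inj} condition (1) of Theorem \ref{thm:conn-comp} holds, and by \eqref{eq:no-branch-pt} we have $N = 2p > 0$, so that theorem applies and each component is a coset of $f_Q^* J(C_Q) \cong J(C_Q)$, the theta divisor removed giving an affine open subset.

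I would then assemble these pieces: each connected component of $M_{p, d, P}^{\ue}/PGL_p(\C)$ sits inside a single $f_Q^*(J(C_Q))$-coset of $(\Pic^{g_P-1}(C_P))^T$, from which the complement of $\Theta_P$ is removed; the resulting set is a translate of an affine open subset of $J(C_Q)$ via the injective $f_Q^*$, hence isomorphic to an affine open subset of $J(C_Q)$ itself. Combined with the Lemma \ref{lem:m-delta-vs-m-ue} identification at the start, this yields the claim for $M_{p, d, P}^{\Delta_{\ue}}/G_{\Delta_{\ue}}$.

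The main obstacle I anticipate is verifying carefully that the matrix automorphism $\tau$ transports, under Beauville's correspondence, to the deck-transformation-induced automorphism of $\Pic^{g_P-1}(C_P)$, and in particular that the $\tau$-fixed locus on the quotient matches the naive $T$-fixed locus $(\Pic^{g_P-1}(C_P))^T$ rather than a twisted or shifted version of it. This requires tracking the interplay between the ``multiplication by $y$'' construction of $L_{(\cL, \alpha)}$ and the line-bundle pull-back $f_Q^*$, using the commutative diagram \eqref{eq:comm1} to pin down the correction by $\pi_P^*(\cO_{\bbP^1}(p-1))$; I expect that establishing the precise equivariance — so that a fixed matrix class corresponds exactly to a $T$-fixed line bundle class up to the known degree shift — is where the real care is needed.
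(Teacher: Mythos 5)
Your proposal is correct and follows essentially the same route as the paper's proof: Lemma \ref{lem:m-delta-vs-m-ue} to pass to $M_{p,d,P}^{\ue}/PGL_p(\C)$, the decomposition \eqref{eq:decomp-mp'}, Beauville's Theorem \ref{thm:bea} to identify $M_{p,d,P}'/PGL_p(\C)$ with the complement of $\Theta_P$ in ${\Pic}^{g_P-1}(C_P)^{\tau}$, and then Theorem \ref{thm:conn-comp} together with Proposition \ref{prop:inj} and \eqref{eq:no-branch-pt} to identify each connected component with $J(C_Q)$. The one point the paper makes explicit that you leave implicit is why the complement of the theta divisor is \emph{affine}: $\Theta_P$ is ample on ${\Pic}^{g_P-1}(C_P)$, so its restriction to each component $X$ is again ample, whence $X \setminus \Theta_P$ is an affine open subvariety.
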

\begin{proof}
By restricting the isomorphism given in Lemma \ref{lem:m-delta-vs-m-ue}, we obtain an isomorphism
$$
  M_{p, d, P}^{\Delta_{\ue}}/G_{\Delta_{\ue}}\cong M_{p, d, P}^{\ue}/PGL_p(\C)\;.
$$
In view of \eqref{eq:decomp-mp'}, $M_{p, d, P}^{\ue}/PGL_p(\C)$ is a union of some connected components of
$M_{p,d,P}'/PGL_p(\C)$, which is isomorphic to the complement of $\Theta_P$ in
$$
  {\Pic}^{g_P-1}(C_P)^{\tau} := \{ \cL \in {\Pic}^{g_P-1}(C_P) ~|~ \tau^*(\cL)=\cL \}
$$
by Theorem \ref{thm:bea}.  Let $X$ be a connected component of ${\Pic}^{g_P-1}(C_P)^{\tau}$.  Theorem
\ref{thm:conn-comp}, Proposition \ref{prop:inj} and \eqref{eq:no-branch-pt} show that $X$ is isomorphic to $J(C_Q)$
(as algebraic varieties).  As $\Theta_P$ is an ample divisor on ${\Pic}^{g_P-1}(C_P)$, its restriction to $X$ is
again ample, and hence $X \setminus \Theta_P$ is an affine open subvariety of~$X$.  This completes the proof.
\end{proof}

\begin{remark}
Let $\pi_0$ be the set of connected components of ${\Pic}^{g_P-1}(C_P)^{\tau}$, whose cardinality is $p^{2p-2}$ by
Theorem \ref{thm:conn-comp}, Proposition \ref{prop:inj} and \eqref{eq:no-branch-pt}.  There is a map $\pi_0 \to
\cE$ which sends $X \in \pi_0$ to unique $\ue \in \cE$ such that the image of $M_{p, d, P}^{\ue}/PGL_p(\C)$ in
${\Pic}^{g_P-1}(C_P)$ by the isomorphism in Theorem \ref{thm:bea} is contained in $X$.  It seems to be an
interesting problem to investigate this map.
\end{remark}

\section{Poisson structures and Hamiltonian vector fields}

The goal of this section is to construct a (multi-) Hamiltonian structure on the quotient space
$M_{ir}^{\Delta_{\ue}}/G_{\Delta_{\ue}}$, by using the momentum map of $G_{\Delta_{\ue}}$ acting on
$M_{p,d+p}^{\Delta_{\ue}}$.  In what follows, we first recall the Hamiltonian structure on $M_{p,q}$, next we use
it to construct a (multi-) Hamiltonian structure on $M_{p,q}^{\Delta_{\ue}}$ which will finally lead to a (multi-)
Hamiltonian structure on $M_{ir}^{\Delta_{\ue}}/G_{\Delta_{\ue}}$.

\subsection{Multi-Hamiltonian structure on $M_{p,q}$} 
Let $p \geq 2, q \geq 1$ be integers.  We briefly recall the multi-Hamiltonian structure on $M_{p, q}$.  For
details, see \cite{RS94, PV, PLV}.

For $L(x) \in M_{p,q}$ and $i,j,k \in\bbZ,$ such that $1\leqs i,j\leqs p$, we write $\ell^k(L(x)) \in \Mat_p(\C)$
(resp. $\ell^k_{ij}(L(x)) \in \C$) for the coefficient of $x^k$ (resp. the $(i,j)$-th entry of $\ell^k(L(x))$).
The ring $\cO(M_{p,q})$ of regular functions on $M_{p,q}$ is generated (as a $\C$-algebra) by $\ell_{ij}^k, 1 \leq
i, j \leq p,~ 0 \leq k \leq q$.  We also put $\ell_{ij}^k = 0$ for $k > d$ and for $k < 0$.  The Poisson structures
which are given in the following proposition are restriction to $M_{p,q}$ of a family of linear Poisson structure
on the dual of the loop algebra of $\mathfrak{gl}(p,\mathbb{C})=\Mat_p(\C)$.

\begin{prop}[{\cite[eq.(10)]{RS88}, see also \cite{RS94}}]
\label{Lie-Poisson_loop}
For any integer $\mu$ with $0 \leq \mu \leq q+1$ there exists a unique Poisson bracket $\PB_\mu$ on $\cO(M_{p,q})$
for which
\begin{equation}\label{eq:poisson_loop}
  \pb{\ell_{ij}^k,\ell_{mn}^l}_\mu
  =
  \eta_\mu^{kl}(\ell_{mj}^{k+l+1-\mu}\delta_{ni}
  -\ell_{in}^{k+l+1-\mu}\delta_{jm})\;,
\end{equation}%
where $\eta_\mu^{kl}:=1$ if $k,l<\mu$ and $\eta_\mu^{kl}:=-1$ if $k,l\geqs \mu$ and $\eta_\mu^{kl}:=0$ otherwise.
Moreover, the brackets $\PB_\mu~ (\mu=0, \dots, q+1)$ form a family of compatible Poisson brackets, i.e., any
linear combination of these brackets is a Poisson bracket.
\end{prop}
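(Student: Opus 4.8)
The plan is to exhibit these brackets as the restriction to $M_{p,q}$ of the standard family of linear Poisson (Lie--Poisson) structures on the dual of a truncated loop algebra, and then verify the stated formula together with compatibility by a direct but structured computation. First I would recall the setup: the loop algebra $\gloop := \gl(p,\C) \otimes \C[x,x^{-1}]$ carries, for each splitting parameter, an $R$-matrix obtained from the decomposition of $\gloop$ into the part with non-negative powers of $x$ and the part with negative powers. The parameter $\mu$ shifts this splitting point, so that the relevant projection $\Pi_\mu$ isolates the powers $x^k$ with $k<\mu$ from those with $k\geqs\mu$; this is precisely what is encoded in the sign function $\eta_\mu^{kl}$. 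The coordinate functions $\ell_{ij}^k$ are the natural linear coordinates on (a finite-dimensional piece of) $\gloop^*$ under the residue pairing $\can{A(x),B(x)} = \Res_{x=0}\Tr(A(x)B(x))\frac{dx}{x}$ or a suitable variant, and $M_{p,q}$ sits inside $\gloop^*$ as an $R$-invariant Poisson subspace.

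Next I would establish existence and the explicit formula \eqref{eq:poisson_loop}. The cleanest route is to invoke the general Reshetikhin--Semenov-Tian-Shansky construction (for which \cite{RS88, RS94} are cited): given the linear Poisson structure on $\gloop^*$ twisted by the $R$-matrix $R_\mu = \Pi_{<\mu} - \Pi_{\geqs\mu}$, one obtains a well-defined Poisson bracket, and the bracket of two linear coordinate functions is computed directly from the Lie bracket of $\gl(p,\C)$ and the action of $R_\mu$. Carrying out this computation on the generators $\ell_{ij}^k$ reproduces the right-hand side of \eqref{eq:poisson_loop}: the term $\ell_{mj}^{k+l+1-\mu}\delta_{ni} - \ell_{in}^{k+l+1-\mu}\delta_{jm}$ is exactly the structure constant of $\gl(p,\C)$ in the $E_{ij}$ basis (namely $[E_{ij},E_{mn}] = \delta_{jm}E_{in} - \delta_{ni}E_{mj}$) combined with the degree shift $k+l+1-\mu$ coming from the loop variable and the residue pairing, while the coefficient $\eta_\mu^{kl}$ records which side of the splitting the indices $k,l$ fall on. Uniqueness follows because a Poisson bracket on $\cO(M_{p,q})$ is determined by its values on the algebra generators $\ell_{ij}^k$.

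The remaining point is that $M_{p,q}$ is genuinely a Poisson subspace, i.e. that the bracket of two generators lands again in $\cO(M_{p,q})$ rather than producing coefficients $\ell^m$ with $m$ outside the range $0\leqs m\leqs q$. Here the convention $\ell_{ij}^k=0$ for $k>q$ and $k<0$ does the work: one checks that whenever $\eta_\mu^{kl}\neq 0$ the shifted degree $k+l+1-\mu$ stays within $\{0,\dots,q\}$, so the formula is internally consistent and the Jacobi identity inherited from the loop algebra descends. Finally, for compatibility I would verify that every linear combination $\sum_\mu c_\mu\PB_\mu$ is again Poisson. This is the step I expect to be the main obstacle, since Jacobi identities are not linear in the bracket. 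The clean way around it is to note that $R_\mu$ depends affinely on the splitting point, so the whole family arises from a \emph{pencil} of compatible $R$-matrices (equivalently, the associated bivector fields form a Poisson pencil); compatibility then follows from the general fact that the $R$-matrix construction applied to a linear family of admissible $R$-matrices yields compatible brackets, which is precisely the content established in \cite{RS94, PV, PLV}. Rather than reprove Jacobi by hand, I would cite this structural result and observe that it applies verbatim to the truncation $M_{p,q}$.
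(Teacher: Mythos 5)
Your overall strategy matches the paper's: the paper offers no proof of this proposition at all, but simply cites \cite[eq.~(10)]{RS88} and \cite{RS94}, prefaced by the remark that these brackets are the restriction to $M_{p,q}$ of a family of linear Poisson structures on the dual of the loop algebra of $\gl(p,\C)$. Your plan --- realize $\PB_\mu$ via the $R$-matrix construction on the loop-algebra dual, read off \eqref{eq:poisson_loop} from the structure constants $[E_{ij},E_{mn}]=\delta_{jm}E_{in}-\delta_{ni}E_{mj}$ together with the splitting, get uniqueness from the Leibniz rule, and cite the references for Jacobi and compatibility --- is exactly the route those references take, so in spirit the two treatments coincide.

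However, the one step you attempt to verify by hand is false. You claim that whenever $\eta_\mu^{kl}\neq 0$ the shifted index $k+l+1-\mu$ lies in $\{0,\dots,q\}$. It does not: for $\mu=0$ and $k=l=q$ one has $\eta_0^{qq}=-1$ but $k+l+1-\mu=2q+1>q$, and for $\mu=q+1$ and $k=l=0$ one has $\eta_{q+1}^{00}=1$ but $k+l+1-\mu=-q<0$. The convention $\ell_{ij}^m=0$ for $m\notin\{0,\dots,q\}$ therefore does real work --- it kills many of these brackets --- rather than confirming that the indices never leave the range. More seriously, the inference you draw from it ("so the formula is internally consistent and the Jacobi identity inherited from the loop algebra descends") would not be valid even if the indices did stay in range: setting some structure constants of a Lie--Poisson bracket to zero does not in general preserve the Jacobi identity. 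What actually makes the truncation work is that $M_{p,q}$ is a genuine Poisson subspace of the $R$-bracket Lie--Poisson structure (equivalently, the ideal generated by the out-of-range coordinates $\ell_{ij}^m$ is a Poisson ideal), and establishing this, together with compatibility, is precisely the content of the cited theorem of Reyman--Semenov-Tian-Shansky; it is not a consequence of index bookkeeping. Your compatibility argument has a similar soft spot: the family $R_\mu=\Pi_{<\mu}-\Pi_{\geqs\mu}$ is not affine in $\mu$ (indeed $R_{\mu+2}-2R_{\mu+1}+R_\mu$ is twice the difference of the projections onto the $x^{\mu+1}$- and $x^{\mu}$-components, which is nonzero), and linear combinations of solutions of the modified Yang--Baxter equation need not be solutions, so "pencil of $R$-matrices" is not by itself a proof. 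Since you ultimately defer both points to \cite{RS88,RS94,PLV} --- which is all the paper itself does --- the proposal is salvageable, but as written the two hand-verifications it contains are incorrect and should either be fixed (state that the convention makes the right-hand side well defined, and that Poisson-subspace property and compatibility are taken from the references) or dropped.
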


Let $j$ and $i\geqs0$ be integers.  We define the {\it Hamiltonian function} $H_{ij} \in \cO(M_{p,q})$ by
\begin{align}\label{eq:Hamiltonian}
  H_{ij}(L(x))=\frac1{i+1}\Res_{x=0}{\Tr(L^{i+1}(x))\over x^{j+1}}\;
\end{align}
and the {\it Hamiltonian vector field} $\frac{\diff}{\diff t_{i,j}}$ on $M_{p,q}$ by
\begin{equation}\label{eq:ham_vf}
  \frac{\diff}{\diff t_{i,j}} :=-\Pb{H_{i,j}}_1\;. 
\end{equation}
Here we use the notation $\Res_{x=0} f(x) := f_{-1}$ for $f(x) = \sum_{k} f_k x^k \in \C[x,x^{-1}]$.  In what follows, for any
$L(x)=\sum_{k=0}^N L_k x^k \in \Mat_p(\C)[x]$ and $j \geq 0$, we write $(L(x)/x^{j})_+ := \sum_{k=0}^{N-j}
L_{k+j}x^k$.

\begin{remark}\label{rem:H-vf}
By the above definition, $H_{i,j} = 0$ for $j<0$ or $j>(i+1)q$.
\end{remark} 

\begin{prop} \cite[eqs.~(12)--(14)]{RS88} 
\label{prop:reduced-vf}
(1) For any integers $i$ and $j$ with $i\geqs 0$ and $0\leqs j\leqs(i+1)q$ the Hamiltonian vector field
(\ref{eq:ham_vf}) is given by the following Lax equation:
\begin{equation}\label{eq:vfs_on_loop}
  \frac{\diff}{\diff t_{i,j}}L(x)
=\lb{L(x),\left(\frac{L^i(x)}{x^{j}}\right)_+}\;.
\end{equation}
Moreover, 
for any integer $\mu$ with $0 \leq \mu \leq q+1$ we have that
\begin{align}\label{eq:multi-h-on-loop}
\frac{\diff}{\diff t_{i,j}} = -\Pb{H_{i,j-1+\mu}}_\mu\;.
\end{align}
In particular, each of the vector fields (\ref{eq:ham_vf}) 
is multi-Hamiltonian.
\\
(2) The Hamiltonian functions $H_{i,j}$ are pairwise in involution with respect to each one of the  Poisson
brackets $\PB_\mu$, where $0 \leq \mu \leq q+1$.
\\
(3) Let $\mu$ be such that $0 \leq \mu \leq q+1$. For any $i\geqs 0$, $H_{i,j}$ is a Casimir function of
$\PB_{\mu}$ when $j \in \{0,\ldots,\mu-1,iq+\mu,\dots,(i+1)q\}$.
\end{prop}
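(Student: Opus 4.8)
The plan is to deduce all three statements from a single \emph{Lax identity} valid for every admissible index: for all integers $i\geqs 0$ and $J$ and every $\mu$ with $0\leqs\mu\leqs q+1$, the $\PB_\mu$-Hamiltonian vector field $-\Pb{H_{i,J}}_\mu$ acts on the matrix-valued coordinate $L(x)$ by
\begin{equation*}
  -\Pb{H_{i,J}}_\mu\,L(x)=\lb{L(x),\left(\frac{L^i(x)}{x^{J-\mu+1}}\right)_+}\;,
\end{equation*}
where $(\,\cdot/x^m)_+$ is the truncation introduced above, so that it equals $x^{-m}L^i(x)$ when $m\leqs 0$ and vanishes when $m>iq$. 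Granting this, part (1) is immediate: specializing to $\mu=1$ gives $J-\mu+1=J$ and recovers the Lax equation \eqref{eq:vfs_on_loop}, whereas the substitution $J=j-1+\mu$ turns the exponent into $j$, so that the right-hand side becomes $\lb{L(x),(L^i(x)/x^j)_+}$ independently of $\mu$; comparing with the $\mu=1$ case yields the multi-Hamiltonian identity \eqref{eq:multi-h-on-loop}.

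To establish the Lax identity I would first compute the differential of $H_{i,J}$. From $\partial L(x)/\partial \ell_{mn}^l=E_{mn}x^l$ and $\Tr(L^i E_{mn})=(L^i)_{nm}$ one gets that $\partial H_{i,J}/\partial \ell_{mn}^l$ is the coefficient of $x^{J-l}$ in $(L^i(x))_{nm}$; equivalently, variations of $L$ are paired against $x^{-J-1}L^i(x)$ under the residue pairing $\Res_{x=0}\Tr(\,\cdot\,)$. I would then insert this gradient together with the explicit formula \eqref{eq:poisson_loop} into $\Pb{H_{i,J}}_\mu$ evaluated on each generator $\ell_{ab}^c$, and reassemble the two resulting sums --- the one carrying $\delta_{ni}$ and the one carrying $\delta_{jm}$ --- into the two matrix products of a single commutator, reading off the coefficient of $x^c$ in the $(a,b)$-entry of $\lb{L(x),(L^i/x^{J-\mu+1})_+}$. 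Conceptually this is the Adler--Kostant--Symes mechanism: \eqref{eq:poisson_loop} is the Lie--Poisson bracket attached to the $R$-matrix coming from the splitting $\Mat_p(\C)\otimes\C[x,x^{-1}]=\bigoplus_{k\geqs\mu}\Mat_p(\C)x^k\oplus\bigoplus_{k<\mu}\Mat_p(\C)x^k$, and the $\PB_\mu$-flow of the $\Ad$-invariant function $\frac1{i+1}\Tr L^{i+1}$ is the Lax flow of its projected gradient. I expect the bookkeeping of indices --- in particular checking that the cut-off $\eta_\mu^{kl}$ precisely converts the full product $x^{-J-1}L^i$ into the truncation $(L^i/x^{J-\mu+1})_+$, and keeping track of the ranges forced by Remark \ref{rem:H-vf} --- to be the main obstacle; everything else is formal.

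With the Lax identity in hand, parts (2) and (3) are short. For involution, I would feed a Hamiltonian flow into a spectral invariant: writing $A=(L^i/x^{J-\mu+1})_+$, cyclicity of the trace gives $-\Pb{H_{i,J}}_\mu\,\Tr(L^{i'+1}(x))=(i'+1)\Tr(L^{i'}[L,A])=0$ for every value of $x$, so the flow annihilates every $H_{i',j'}$; hence $\pb{H_{i,J},H_{i',j'}}_\mu=0$ for all indices and all $\mu$, which is the asserted pairwise involution. For the Casimirs, $H_{i,j}$ lies in the kernel of $\PB_\mu$ exactly when its $\PB_\mu$-Hamiltonian vector field vanishes identically; by the Lax identity this field is $\lb{L(x),(L^i(x)/x^{j-\mu+1})_+}$, a commutator that is zero for all $L$ precisely when the truncation is trivial. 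This happens in two ways: when it captures all of $x^{-(j-\mu+1)}L^i$, so that it is a polynomial in $L$ and commutes with $L$ --- that is, when $j-\mu+1\leqs 0$, i.e. $0\leqs j\leqs\mu-1$; or when it is empty, i.e. when $j-\mu+1>iq$, i.e. $iq+\mu\leqs j\leqs(i+1)q$. These are exactly the two ranges in the statement, which completes the plan.
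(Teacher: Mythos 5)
Your proposal is correct, and for parts (2) and (3) it is in substance identical to the paper's argument: involution is obtained by letting the Lax flow act on $\Tr L(x)^{k}$ and invoking cyclicity of the trace, and the Casimir ranges come from observing that the vector field is the commutator $\lb{L(x),\left(L^{i}(x)/x^{j-\mu+1}\right)_+}$, which vanishes identically when the truncation is either all of $x^{\mu-1-j}L^{i}(x)$ (the range $0\leqs j\leqs \mu-1$) or empty (the range $j\geqs iq+\mu$), exactly the paper's observation that $\diff/\diff t_{i,j}$ vanishes for $i=0$, $j<1$ or $j\geqs iq+1$ combined with \eqref{eq:multi-h-on-loop}. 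The one genuine difference is part (1): the paper does not prove \eqref{eq:vfs_on_loop} and \eqref{eq:multi-h-on-loop} at all, but obtains them by substituting $P(L(x))=\Tr L(x)^{i+1}/(i+1)$, $\varphi(x)=1/x^{j+1}$, $q(x)=x^{\mu}$ into the formulae of \cite{RS88}, whereas you propose a self-contained derivation organized around the single identity $-\Pb{H_{i,J}}_\mu L(x)=\lb{L(x),\left(L^{i}(x)/x^{J-\mu+1}\right)_+}$, valid for all $J$ and $\mu$, proved by pairing the gradient of $H_{i,J}$ (the coefficient extraction you describe is correct) against the bracket \eqref{eq:poisson_loop}. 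That unified identity is true and is an efficient packaging — it makes \eqref{eq:multi-h-on-loop}, the involution, and the Casimir ranges (including the degenerate case $i=0$) immediate — and its self-containedness is what your approach buys over the paper's citation. The caveat is that you only outline, and do not execute, the index computation establishing it (the conversion of the cutoff $\eta_\mu^{kl}$ into the truncation, which hinges on the cancellation of the full convolution $\sum_{s+t=\mathrm{const}}\lb{L_s,L_t}=0$); since the paper's own proof is explicitly a sketch deferring the same computation to \cite{RS88}, your proposal sits at the same level of completeness as the published argument.
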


\begin{proof}
We give only a sketch of the proof, 
as all claims are essentially in \cite{RS88}.
\\
(1) We get \eqref{eq:Hamiltonian}--\eqref{eq:multi-h-on-loop}
from the formulae in \cite{RS88} by setting
$P(L(x)) := \Tr L(x)^{i+1}/(i+1)$, $\varphi(x) = 1/x^{j+1}$
and $q(x) = x^{\mu}$.
\\
(2) It immediately follows from (1); for $i$, $j$, $\mu$ as above 
and $k \geq 1$ we have
$$
\{\Tr L(x)^k, ~H_{i,j-1+\mu} \}_\mu 
\stackrel{\eqref{eq:multi-h-on-loop}}{=} 
\frac{\diff \Tr L(x)^k}{\diff t_{i,j}} 
= k \Tr \left( L(x)^{k-1} \frac{\diff L(x)}{\diff t_{i,j}}\right)
\stackrel{\eqref{eq:vfs_on_loop}}{=} 0\;.
$$ 
\\ 
(3) It is clear from the Lax equation \eqref{eq:vfs_on_loop} that the vector field $\diff / \diff t_{i,j}$ is
identically zero when $i = 0$ or $j < 1$ or $j \geq iq+1$.  Then the claim follows from the correspondence between
\eqref{eq:vfs_on_loop} and \eqref{eq:multi-h-on-loop}.
\end{proof}

By the last part of Prop.~\ref{Lie-Poisson_loop}, if $\phi(x)=\sum_{\mu=0}^{q+1} c_\mu x^\mu$ is any polynomial of
degree at most $q+1$, then
\begin{equation*}
  \PB_\phi:=\sum_{\mu=0}^{q+1} c_\mu \PB_\mu\;
\end{equation*}
defines a Poisson structure on $M$. A convenient formula for these Poisson structures can be written down in terms
of generating functions, defined as follows. Let $x$ be a formal variable, define
\begin{align*}
  &\ell_{ij}(x) := \sum_{k=0}^{q+1} \ell_{ij}^k x^k\;,
\end{align*}
and let, for $\ast = \phi, ~\mu ~(0 \leqs \mu \leqs q+1)$ 
\begin{align*}
\pb{\ell_{ij}(x),\ell_{mn}(y)}_\ast:= \sum_{k,l=0}^q\pb{\ell_{ij}^k,\ell_{mn}^l}_\ast x^ky^l.
\end{align*}

\begin{prop}\label{prop:Poisson-M}
Let $\phi(x)=\sum_{\mu=0}^{q+1} c_\mu x^\mu$ be a polynomial of degree at most $q+1$.  For any $1 \leq i, j, m, n
\leq p$, we have
\begin{align}\label{eq:pb_with_pols}
  \begin{split}
  &\pb{\ell_{ij}(x),\ell_{mn}(y)}_\phi
  \\
  & \quad = 
  \frac{\ell_{in}(x)\phi(y)-\ell_{in}(y)\phi(x)}{x-y}\delta_{jm}
  -\frac{\ell_{mj}(x)\phi(y)-\ell_{mj}(y)\phi(x)}{x-y}\delta_{ni}\;.
  \end{split}
\end{align}
\end{prop}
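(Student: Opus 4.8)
The plan is to derive the closed formula \eqref{eq:pb_with_pols} directly from the definition $\PB_\phi=\sum_{\mu=0}^{q+1}c_\mu\PB_\mu$ by computing the generating-function bracket $\pb{\ell_{ij}(x),\ell_{mn}(y)}_\mu$ for each fixed $\mu$ and then summing against the coefficients $c_\mu$. By bilinearity it suffices to establish the identity for each monomial weight, i.e.\ to show
\begin{equation*}
  \pb{\ell_{ij}(x),\ell_{mn}(y)}_\mu
  =\frac{\ell_{in}(x)y^\mu-\ell_{in}(y)x^\mu}{x-y}\delta_{jm}
  -\frac{\ell_{mj}(x)y^\mu-\ell_{mj}(y)x^\mu}{x-y}\delta_{ni}\;,
\end{equation*}
since multiplying by $c_\mu$ and summing over $\mu$ replaces each $x^\mu$ (resp.\ $y^\mu$) by $\phi(x)$ (resp.\ $\phi(y)$) and reproduces \eqref{eq:pb_with_pols}.

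First I would expand the left-hand side using Proposition \ref{Lie-Poisson_loop}:
\begin{equation*}
  \pb{\ell_{ij}(x),\ell_{mn}(y)}_\mu
  =\sum_{k,l}\eta_\mu^{kl}\bigl(\ell_{mj}^{k+l+1-\mu}\delta_{ni}
  -\ell_{in}^{k+l+1-\mu}\delta_{jm}\bigr)x^ky^l\;.
\end{equation*}
The two $\delta$-terms are handled identically, so the core of the argument is to evaluate the scalar double sum $S(x,y):=\sum_{k,l}\eta_\mu^{kl}\,\ell_{ab}^{k+l+1-\mu}x^ky^l$ for a fixed pair of indices $(a,b)$. The key step is to split this sum according to the three-valued definition of $\eta_\mu^{kl}$: the region $k,l<\mu$ contributes with sign $+1$ and the region $k,l\geq\mu$ with sign $-1$, while mixed regions contribute nothing. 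In each region I substitute $r=k+l+1-\mu$ as the summation index for the entry $\ell_{ab}^r$ and carry out the remaining geometric-type summation in the second index; the telescoping of the two regions is what produces the difference quotient $\frac{1}{x-y}$ together with the factors $x^\mu$ and $y^\mu$. Concretely, one recognises $\sum_{k\geq\mu}x^k=\frac{x^\mu}{1-x}$-type identities at the level of formal power series, or more cleanly one multiplies through by $(x-y)$ and verifies the polynomial identity $(x-y)S(x,y)=\ell_{ab}(x)y^\mu-\ell_{ab}(y)x^\mu$ term by term, which avoids any convergence issue.

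The cleanest route, and the one I would actually write, is to bypass the explicit index bookkeeping: multiply the desired identity by $(x-y)$ and check the resulting \emph{polynomial} identity by comparing coefficients of $x^ky^l$ on both sides. On the right-hand side $(x-y)\cdot S$ gives $\ell_{ab}(x)y^\mu-\ell_{ab}(y)x^\mu$, whose $x^ky^l$ coefficient is $\ell_{ab}^k\,[l=\mu]-\ell_{ab}^l\,[k=\mu]$; on the left the factor $(x-y)$ shifts indices so that the coefficient becomes a difference of two $\eta_\mu$-weighted entries, and one checks these two expressions agree using only the definition of $\eta_\mu^{kl}$. This reduces the whole proposition to a finite, purely combinatorial coefficient comparison.

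The main obstacle will be the bookkeeping at the \emph{boundary} of the summation regions, namely the indices where $k=\mu$ or $l=\mu$ (where $\eta_\mu^{kl}$ switches value) and the truncation caused by $\ell_{ab}^r=0$ for $r<0$ or $r>q+1$. These edge cases are exactly where a naive geometric-series manipulation can drop or double-count a term, so the difference-quotient identity must be verified as an honest equality of polynomials in $x,y$ rather than an equality of formal infinite sums. Once the scalar computation $(x-y)S(x,y)=\ell_{ab}(x)y^\mu-\ell_{ab}(y)x^\mu$ is secured, applying it with $(a,b)=(m,j)$ for the $\delta_{ni}$ term and $(a,b)=(i,n)$ for the $\delta_{jm}$ term, and then summing against $c_\mu$, immediately yields \eqref{eq:pb_with_pols}.
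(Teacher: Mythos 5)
Your plan is structurally the same as the paper's own proof: reduce to $\phi(x)=x^\mu$ by linearity, expand $\pb{\ell_{ij}(x),\ell_{mn}(y)}_\mu$ via Proposition \ref{Lie-Poisson_loop}, and evaluate the scalar double sum $S_{ab}(x,y):=\sum_{k,l=0}^{q}\eta_\mu^{kl}\ell_{ab}^{k+l+1-\mu}x^ky^l$ by multiplying by $(x-y)$ and comparing coefficients of $x^ky^l$; the paper isolates exactly this step as the lemma containing \eqref{eq:poisson_loop_poly}. However, your pivotal scalar identity is stated with the wrong sign. The correct statement (this is \eqref{eq:poisson_loop_poly} with $\xi=\ell_{ab}$ and $s=\mu$) is
\begin{equation*}
  (x-y)\,S_{ab}(x,y)=\ell_{ab}(y)\,x^{\mu}-\ell_{ab}(x)\,y^{\mu}\;,
\end{equation*}
whereas you claim $(x-y)S_{ab}(x,y)=\ell_{ab}(x)y^{\mu}-\ell_{ab}(y)x^{\mu}$. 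The coefficient check you yourself propose detects this: for $q=0$, $\mu=1$ one has $S_{ab}=\eta_1^{00}\ell_{ab}^{0}=\ell_{ab}^{0}$, hence $(x-y)S_{ab}=\ell_{ab}^{0}(x-y)=\ell_{ab}(y)x-\ell_{ab}(x)y$, the negative of your right-hand side. More generally, for $0\leq k<\mu$ the coefficient of $x^{k}y^{\mu}$ in $(x-y)S_{ab}$ equals $\bigl(\eta_\mu^{k-1,\mu}-\eta_\mu^{k,\mu-1}\bigr)\ell_{ab}^{k}=(0-1)\ell_{ab}^{k}=-\ell_{ab}^{k}$ (the first weight is mixed or absent, the second is $+1$), while your formula predicts $+\ell_{ab}^{k}$.

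This sign does not wash out in the assembly. Since the expansion reads
\begin{equation*}
  \pb{\ell_{ij}(x),\ell_{mn}(y)}_\mu=S_{mj}(x,y)\,\delta_{ni}-S_{in}(x,y)\,\delta_{jm}\;,
\end{equation*}
substituting your version of the identity produces exactly the \emph{negative} of the right-hand side of \eqref{eq:pb_with_pols}: as written, either the term-by-term verification fails (if carried out honestly), or the final formula comes out with an overall minus sign (if the identity is used as claimed). The repair is purely local: with the corrected sign one gets $-S_{in}\delta_{jm}=\frac{\ell_{in}(x)y^{\mu}-\ell_{in}(y)x^{\mu}}{x-y}\delta_{jm}$ and $S_{mj}\delta_{ni}=-\frac{\ell_{mj}(x)y^{\mu}-\ell_{mj}(y)x^{\mu}}{x-y}\delta_{ni}$, and summing against the coefficients $c_\mu$ then yields \eqref{eq:pb_with_pols}. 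The rest of your outline --- the reduction to monomials, the insistence on a polynomial identity rather than formal geometric series, and the attention to boundary and truncation cases --- is sound and coincides with the paper's argument.
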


For its proof, we will use the following lemma.
\begin{lemma}
  Let $\xi^0,\xi^1,\dots,\xi^q$ be arbitrary (commuting, independent) variables. Set $\xi^i:=0$ for $i>q$ and for
  $i<0$. Denote $\xi(x):= \sum_{i=0}^{q}\xi^ix^i$. 
 Then for any $s$ with $0\leqs s \leqs q+1$,
\begin{equation}\label{eq:poisson_loop_poly}
  \left(\sum_{k,l=0}^{s-1}-\sum_{k,l=s}^{q}\right)\xi^{k+l+1-s}x^ky^l=\frac{\xi(y)x^s-\xi(x)y^s}{x-y}\;.
\end{equation}%
\end{lemma}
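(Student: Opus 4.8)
The plan is to prove the identity \eqref{eq:poisson_loop_poly} by comparing the coefficient of $x^a y^b$ on both sides, exploiting that the right-hand side is a symmetric-difference quotient and hence has a clean expansion. First I would rewrite the right-hand side as a formal power series. Since $\frac{\xi(y)x^s - \xi(x)y^s}{x-y}$ has a removable singularity at $x=y$ (the numerator vanishes there), it is a genuine polynomial in $x$ and $y$, and I would expand each of the two terms using the geometric-type identity
\begin{equation*}
  \frac{x^s - y^s}{x-y} = \sum_{k=0}^{s-1} x^k y^{s-1-k}\;,\qquad
  \frac{x^m - y^m}{x-y} = \sum_{k} x^k y^{m-1-k}\;,
\end{equation*}
applied termwise after substituting $\xi(x)=\sum_i \xi^i x^i$. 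The key algebraic observation is that $\frac{\xi(y)x^s - \xi(x)y^s}{x-y}$ can be reorganized so that each monomial $\xi^i$ contributes $\xi^i \frac{x^{s} y^{i} - x^{i} y^{s}}{x-y}$, and the sign of the resulting geometric sum flips depending on whether $i<s$ or $i\geq s$; this sign flip is exactly what produces the $\left(\sum_{k,l=0}^{s-1} - \sum_{k,l=s}^{q}\right)$ split on the left.

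The main step is therefore to verify that for a single index $i$ one has
\begin{equation*}
  \frac{x^s y^i - x^i y^s}{x-y}
  = \Bigl(\sum_{k,l\,:\,k+l+1-s=i,\ k,l<s} - \sum_{k,l\,:\,k+l+1-s=i,\ k,l\geq s}\Bigr) x^k y^l\;,
\end{equation*}
after which multiplying by $\xi^i$ and summing over $i$ (recalling the convention $\xi^i=0$ for $i<0$ and $i>q$) assembles the full claim. I would prove this single-index identity by treating the cases $i<s$ and $i\geq s$ separately: when $i<s$, factor out $x^i y^i$ to get $x^i y^i \frac{x^{s-i}-y^{s-i}}{x-y}$, expand the geometric sum into nonnegative powers, and match the exponent bookkeeping $k+l+1-s=i$ with the constraint $k,l<s$; when $i\geq s$, factor out $x^s y^s$ and observe that the sign reverses, matching the range $k,l\geq s$.

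The main obstacle I expect is purely combinatorial bookkeeping: making sure the index ranges in the two nested sums on the left precisely match the exponents appearing after the geometric expansions, including the boundary behavior at $s=0$ and $s=q+1$ (where one of the two sums is empty) and the truncation caused by $\xi^i=0$ outside $0\leq i\leq q$. A clean way to sidestep a long case analysis is to note that both sides are polynomial in $x,y$ with coefficients linear in the $\xi^i$, so it suffices to check the identity with $\xi(x)$ replaced by the single monomial $x^i$ for each $i$, reducing everything to the elementary identity
\begin{equation*}
  \frac{x^s y^i - x^i y^s}{x-y} = \pm\!\!\sum_{\substack{k+l = s+i-1}} \!\! x^k y^l
\end{equation*}
restricted to the appropriate range, and I would carry it out in that reduced form to keep the argument transparent.
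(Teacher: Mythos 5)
Your proposal is correct and takes essentially the same approach as the paper: both arguments decompose the identity according to the coefficient of each $\xi^i$ and verify the resulting per-index identity $\pm\sum_{k+l=s+i-1}x^ky^l=\frac{x^sy^i-x^iy^s}{x-y}$ (with the sign and index range split by $i<s$ versus $i\geq s$) via the factorization $x^n-y^n=(x-y)\sum_{j}x^jy^{n-1-j}$. The only difference is directional and cosmetic: the paper multiplies the left-hand side by $x-y$ and telescopes, while you divide the right-hand side and expand geometric sums.
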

\begin{proof}
We multiply the left hand side in (\ref{eq:poisson_loop_poly}) by $x-y$ and determine in the resulting polynomial
the coefficient of $\xi^{t+1-s}$ for $t=s-1,\dots,q+s-1$, the other variables $\xi^i$ being zero by
definition. For $t=s-1,s,\dots,2s-2$ the coefficient of $\xi^{t+1-s}$ is given by
\begin{equation*}
  \sum_{k,l=0\atop k+l=t}^{s-1}(x^{k+1}y^l-x^ky^{s+1})=x^s y^{t+1-s}-x^{t+1-s}y^s\;,
\end{equation*}%
while for $t=2s,2s+1,\dots,q+s-1$ it is given by
\begin{equation*}
  -\sum_{k,l=s\atop k+l=t}^{q}(x^{k+1}y^l-x^ky^{s+1})=x^s y^{t+1-s}-x^{t+1-s}y^s\;;
\end{equation*}%
for the remaining value of $t$, to wit $t=2s-1$, the coefficient of $\xi^{t+1-s}$ is zero, which again can be
written as $x^s y^{t+1-s}-x^{t+1-s}y^s$. Summing up, the left hand side of
(\ref{eq:poisson_loop_poly}), multiplied by $x-y$, is given by
\begin{equation*}
  \sum_{t=s-1}^{q+s-1}\xi^{t+1-s}(x^s y^{t+1-s}-x^{t+1-s}y^s)=\xi(y)x^s-\xi(x)y^s\;,
\end{equation*}%
as was to be shown.
\end{proof}

\begin{proof}[Proof of Proposition \ref{prop:Poisson-M}]
Using \eqref{eq:poisson_loop} and \eqref{eq:poisson_loop_poly}
we obtain
\begin{eqnarray*}
  \pb{\ell_{ij}(x),\ell_{mn}(y)}_\mu
  &=&\sum_{k,l=0}^q\pb{\ell_{ij}^k,\ell_{mn}^l}_\mu x^ky^l\\
  &=&\sum_{k,l=0}^q \eta_\mu^{kl}(\ell_{mj}^{k+l+1-\mu}\delta_{ni}
  -\ell_{in}^{k+l+1-\mu}\delta_{jm})x^ky^l\\
  &=&\left(\sum_{k,l=0}^{\mu-1}-\sum_{k,l=\mu}^{q}\right) 
  (\ell_{mj}^{k+l+1-\mu}\delta_{ni}
  -\ell_{in}^{k+l+1-\mu}\delta_{jm})x^ky^l\\
  &=&\frac{\ell_{in}(x)y^\mu-\ell_{in}(y)x^\mu}{x-y}\delta_{jm}
  -\frac{\ell_{mj}(x)y^\mu-\ell_{mj}(y)x^\mu}{x-y}\delta_{ni}\;.
\end{eqnarray*}
Taking a linear combination of the above for $\mu=0,\ldots,q+1$,
we get \eqref{eq:pb_with_pols}.
\end{proof}

\subsection{Reduced Poisson structure on a fixed point locus}

Set $\zeta:=e^{2\pi i/p}$ and fix $\ue \in (\Z / p \Z)^p$.
Recall that we have the automorphism
$\sigma_{\Delta_{\ue}}$ on $M_{p,q}$, where 
$\sigma_{\Delta_{\ue}}$ acts as
\begin{equation*}
  \sigma_{\Delta_{\ue}}(L(x))  = \Ad (\Delta_{\ue}^{-1}) (L(\zeta x))\;.
\end{equation*}
For $F \in \mathcal{O}(M_{p,q})$, we write $\sigma_{\Delta_{\ue}}^*(F)$ or $\sigma_{\Delta_{\ue}}^*F$ for $F\circ
\sigma_{\Delta_{\ue}}$. Then, we have
%
\begin{equation}\label{eq:action_on_xis}
  (\sigma_{\Delta_{\ue}}^*) (\ell_{ij}^k)
  = \zeta^{k+e_j-e_i} \, \ell_{ij}^k\;.
\end{equation}%

We determine in the following proposition 
for which values of $\mu$ the map $\sigma_{\Delta_{\ue}}$ is a Poisson
automorphism of $(M_{p,q},\PB_{\mu})$.

\begin{prop}\label{prop:reduced-PB}
  Let $\mu$ be an integer such that $0 \leq \mu \leq q+1$. 
  The map $\sigma_{\Delta_{\ue}}$ is a Poisson automorphism 
  of $(M_{p,q},\PB_\mu)$ 
  if and only if 
  $\mu\equiv1\pmod p$. 
\end{prop}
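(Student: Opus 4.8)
The plan is to determine exactly when $\sigma_{\Delta_{\ue}}$ preserves the bracket $\PB_\mu$ by testing the defining relation \eqref{eq:poisson_loop} directly on the generators $\ell_{ij}^k$, using the explicit formula \eqref{eq:action_on_xis} for the action of $\sigma_{\Delta_{\ue}}^*$. Since $\sigma_{\Delta_{\ue}}$ is an algebra automorphism and $\PB_\mu$ is a Poisson bracket, $\sigma_{\Delta_{\ue}}$ is a Poisson automorphism if and only if $\sigma_{\Delta_{\ue}}^*\pb{\ell_{ij}^k,\ell_{mn}^l}_\mu = \pb{\sigma_{\Delta_{\ue}}^*\ell_{ij}^k,\sigma_{\Delta_{\ue}}^*\ell_{mn}^l}_\mu$ for all indices (the generators suffice because both sides are derivations in each argument).

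First I would compute the right-hand side. By \eqref{eq:action_on_xis}, the two factors contribute an overall scalar $\zeta^{(k+e_j-e_i)+(l+e_n-e_m)}$, so
\begin{equation*}
  \pb{\sigma_{\Delta_{\ue}}^*\ell_{ij}^k,\sigma_{\Delta_{\ue}}^*\ell_{mn}^l}_\mu
  = \zeta^{k+l+e_j-e_i+e_n-e_m}\,\eta_\mu^{kl}\bigl(\ell_{mj}^{k+l+1-\mu}\delta_{ni}-\ell_{in}^{k+l+1-\mu}\delta_{jm}\bigr)\;.
\end{equation*}
Next I would compute the left-hand side by applying $\sigma_{\Delta_{\ue}}^*$ term by term to the right side of \eqref{eq:poisson_loop}. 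Each summand carries a Kronecker delta ($\delta_{ni}$ or $\delta_{jm}$) that sets two indices equal, and in the corresponding entry $\ell_{mj}^{k+l+1-\mu}$ (resp. $\ell_{in}^{k+l+1-\mu}$) the action \eqref{eq:action_on_xis} produces the scalar $\zeta^{(k+l+1-\mu)+e_j-e_m}$ (resp. $\zeta^{(k+l+1-\mu)+e_n-e_i}$). The key observation is that on the support of $\delta_{ni}$ we have $e_n=e_i$, and on the support of $\delta_{jm}$ we have $e_j=e_m$; feeding these equalities into the exponents makes both the $\delta_{ni}$-term and the $\delta_{jm}$-term acquire the \emph{same} scalar $\zeta^{k+l+1-\mu+e_j-e_i+e_n-e_m}$, which factors out cleanly. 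Thus
\begin{equation*}
  \sigma_{\Delta_{\ue}}^*\pb{\ell_{ij}^k,\ell_{mn}^l}_\mu
  = \zeta^{k+l+1-\mu+e_j-e_i+e_n-e_m}\,\eta_\mu^{kl}\bigl(\ell_{mj}^{k+l+1-\mu}\delta_{ni}-\ell_{in}^{k+l+1-\mu}\delta_{jm}\bigr)\;.
\end{equation*}

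Comparing the two expressions, they agree for all indices precisely when the ratio of scalars $\zeta^{1-\mu}$ equals $1$, i.e.\ when $\zeta^{1-\mu}=1$. Since $\zeta$ is a primitive $p$-th root of unity, this holds if and only if $\mu\equiv1\pmod p$, which is the asserted criterion. The main point requiring care is verifying that the two delta-terms can be given a common scalar factor so that the identity reduces to the single condition $\zeta^{1-\mu}=1$ independently of $\ue$; this hinges on the cancellation $e_n=e_i$ (resp. $e_j=e_m$) enforced by the deltas, which I would state explicitly for each of the two terms before factoring. One should also note the bookkeeping subtlety that the scalar comparison must hold even when the entry $\ell_{mj}^{k+l+1-\mu}$ or $\ell_{in}^{k+l+1-\mu}$ is zero (for $k+l+1-\mu$ out of range), but in that case both sides vanish and the condition is vacuous, so no extra constraint arises.
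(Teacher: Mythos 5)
Your proposal is correct and follows essentially the same route as the paper: test the Poisson-automorphism condition on the generators $\ell_{ij}^k$, use the eigenfunction property \eqref{eq:action_on_xis} to extract scalar factors on both sides, and observe that the Kronecker deltas force $e_n=e_i$ (resp. $e_j=e_m$) so that the comparison reduces to $\zeta^{1-\mu}=1$, i.e.\ $\mu\equiv 1\pmod p$. The only cosmetic difference is that the paper splits into the cases $j=m$ and $i=n$ separately, whereas you factor out a common scalar from both delta terms at once; the content is identical.
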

\begin{proof}
Since $\sigma_{\Delta_{\ue}}$ is an automorphism of $M_{p,q}$, it is a Poisson automorphism of $(M_{p,q},\PB_\mu)$
if and only if
\begin{equation}\label{eq:poisson_auto}
  \pb{\sigma_{\Delta_{\ue}}^*\ell_{ij}^k,
      \sigma_{\Delta_{\ue}}^*\ell_{mn}^l}_\mu
  =\sigma_{\Delta_{\ue}}^*\pb{\ell_{ij}^k,\ell_{mn}^l}_\mu\;,
\end{equation}%
for all $1 \leq i,j,m,n \leq p$ and $0 \leq k,l \leq q$. 
Since the functions $\ell_{ij}^k$ are eigenfunctions of $\sigma_{\Delta_{\ue}}$ and since the right hand
side of (\ref{eq:poisson_loop}) is zero, except when $j=m$ or $i=n$, the only restrictions on $\mu$ coming from
(\ref{eq:poisson_auto}) will come from these cases. Let us look at the case $j=m$, $i\neq n$. Then the left hand
side of (\ref{eq:poisson_auto}) is given by
\begin{equation*}
  \zeta^{k+l-e_i+e_n}\pb{\ell_{ij}^k,\ell_{jn}^l}_\mu
  = - \eta_\mu^{kl} \zeta^{k+l-e_i+e_n}\ell_{in}^{k+l+1-\mu}\;,
\end{equation*}%
while the right hand side is given by
\begin{equation*}
  - \eta_\mu^{kl}\sigma_{\Delta_{\ue}}^*\ell_{in}^{k+l+1-\mu}
  = - \eta_\mu^{kl}\zeta^{k+l+1-\mu-e_i+e_n}\ell_{in}^{k+l+1-\mu}\;.
\end{equation*}%
Comparing these two expressions, we see that $\mu\equiv1\pmod p$. 
One finds the same result in the case $i=n$.
\end{proof}

For $1 \leq i,j \leq p$ and $0 \leq k \leq q$, we define a function $b_{ij}^k \in
\mathcal{O}({M_{p,q}^{\Delta_{\ue}}})$ by
\begin{align}\label{eq:b-def}
  b_{ij}^k := \ell_{ij}^k|_{M_{p,q}^{\Delta_{\ue}}}\;,
\end{align}
and we set 
$$
  I_{p,q}^{\ue} := \{(i,j,k) ~|~ 1\leq i,j \leq p,~0 \leq k \leq q, ~k \equiv e_i-e_j\pmod p \}.
$$ 
Then $b_{ij}^k = 0$ if $(i,j,k) \notin I_{p,q}^{\ue}$, and $\{b_{ij}^k ~|~ (i,j,k) \in I_{p,q}^{\ue} \}$ gives a
system of coordinates on $M_{p,q}^{\Delta_{\ue}}$.  On the other hand, we define functions $\tilde{b}_{ij}^k \in
\mathcal{O}(M_{p,q})$ by
\begin{align}\label{eq:t-b}
  \tilde{b}_{ij}^k
  := 
  \frac{1}{p} \sum_{m=1}^p (\sigma_{\Delta_{\ue}}^\ast)^m 
  (\ell_{ij}^k)\;. 
\end{align}
By construction, $\tilde{b}_{ij}^k$ is $\sigma_{\Delta_{\ue}}$-invariant and is an extension of ${b}_{ij}^k$ to
$M_{p,q}$ for all $i,j,k$. 
Consequently, we can write $\tilde{b}_{ij}(x) = x^{e_i-e_j} \alpha_{ij}(x^p)$ for some $\alpha_{ij}(x) \in
\mathcal{O}(M_{p,q})[x]$ where $\tilde{b}_{ij}(x) := \sum_{k=0}^q \tilde{b}_{ij}^k x^k$.

According to Prop.~\ref{prop:reduced-PB} and \cite[Sect.\ 5.4.3]{PLV}, the fixed point locus
$M_{p,q}^{\Delta_{\ue}}$ of $\sigma_{\Delta_{\ue}}$ inherits a Poisson structure from $\PB_\mu$ when $\mu \equiv
1\pmod p$, which is characterized by (\cite[Prop.~5.36]{PLV})
\begin{align}\label{eq:red-poisson}
  \{ b_{ij}(x) , b_{mn}(y) \}_\mu^{\text{red}}
  =
  \{\tilde{b}_{ij}(x), 
     \ell_{mn}(y)\}_\mu \big|_{M_{p,q}^{\Delta_{\ue}}}\;,
\end{align}
where we write $b_{ij}(x) = \sum_{k=0}^q {b}_{ij}^k x^k$.
It is explicitly written as follows:
\begin{prop}
Let $\phi(x) = x \tilde{\phi}(x^p)$ be a polynomial of degree at most $q+1$, where $\tilde{\phi}(x) \in \bbC[x]$.
The Poisson bracket \eqref{eq:red-poisson} on $M_{p,q}^{\Delta_{\ue}}$ is written as
\begin{align}\label{eq:b-Poisson}
  \begin{split}
  &\{ b_{ij}(x) , b_{mn}(y) \}_{\phi}^{\mathrm{red}}
  \\
  & \quad =
  \delta_{jm} 
  \frac{b_{in}(x) x^{p-\modp{e_m-e_n}} y^{\modp{e_m-e_n}-1} \phi(y)
        - b_{in}(y) y^{\modp{e_j-e_i+1}-1} x^{p-\modp{e_j-e_i+1}} 
          \phi(x)}
  {x^p - y^p}
  \\
  & \qquad - 
  \delta_{in} 
  \frac{b_{mj}(x) x^{p-\modp{e_m-e_n}} y^{\modp{e_m-e_n}-1} \phi(y) 
        - b_{mj}(y) y^{\modp{e_j-e_i+1}-1} x^{p-\modp{e_j-e_i+1}}
          \phi(x)}
  {x^p - y^p}
  \end{split}
\end{align}
where we set 
$$
  \modp{i} 
  = 
  \begin{cases} i & (1 \leq i \leq p) \\
                p+i & (-p \leq i \leq 0)\;.
  \end{cases} 
$$
\end{prop}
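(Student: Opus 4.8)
The plan is to start from the characterization \eqref{eq:red-poisson} and reduce everything to one scalar summation over $p$-th roots of unity. Since $\phi(x)=x\tilde{\phi}(x^p)$ is supported in degrees $\equiv 1\pmod p$, we have $\PB_\phi=\sum_{\mu\equiv 1\,(p)}c_\mu\PB_\mu$, a linear combination of exactly those brackets for which $\sigma_{\Delta_{\ue}}$ is a Poisson automorphism (Proposition \ref{prop:reduced-PB}); hence the reduced bracket exists and \eqref{eq:red-poisson} holds for $\PB_\phi$ by linearity. Thus it suffices to compute $\{\tilde{b}_{ij}(x),\ell_{mn}(y)\}_\phi$ on $M_{p,q}$ and then restrict to $M_{p,q}^{\Delta_{\ue}}$. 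From the eigenfunction property \eqref{eq:action_on_xis} one gets $(\sigma_{\Delta_{\ue}}^*)^s\ell_{ij}(x)=\zeta^{s(e_j-e_i)}\ell_{ij}(\zeta^s x)$, so that by \eqref{eq:t-b} (and since the $m=p$ term equals the $m=0$ term)
$$ \tilde{b}_{ij}(x)=\frac1p\sum_{s=0}^{p-1}\zeta^{s(e_j-e_i)}\ell_{ij}(\zeta^s x)\;. $$

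Next I would substitute this into the bracket, use bilinearity, and apply \eqref{eq:pb_with_pols} with $x$ replaced by $\zeta^s x$. Because $\phi(x)=x\tilde{\phi}(x^p)$ obeys $\phi(\zeta^s x)=\zeta^s\phi(x)$, the $\delta_{jm}$-part of each summand is $\zeta^{s(e_j-e_i)}\frac{\ell_{in}(\zeta^s x)\phi(y)-\zeta^s\ell_{in}(y)\phi(x)}{\zeta^s x-y}$, with an entirely analogous $\delta_{ni}$-part carrying $\ell_{mj}$. Restricting to $M_{p,q}^{\Delta_{\ue}}$ replaces $\ell_{in}$ by $b_{in}$; since on the fixed locus $b_{in}(x)=x^{e_i-e_n}\alpha_{in}(x^p)$, one has $\ell_{in}(\zeta^s x)|_{M^{\Delta_{\ue}}}=\zeta^{s(e_i-e_n)}b_{in}(x)$ (and likewise for $\ell_{mj}$). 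Pulling the $s$-independent factors $b(x)\phi(y)$ and $b(y)\phi(x)$ out of the sum, and noting that under $\delta_{jm}$ the exponents combine as $\zeta^{s(e_j-e_i)}\cdot\zeta^{s(e_i-e_n)}=\zeta^{s(e_m-e_n)}$ (using $e_j=e_m$), while under $\delta_{ni}$ they combine as $\zeta^{s(e_j-e_i)}\cdot\zeta^{s(e_m-e_j)}=\zeta^{s(e_m-e_n)}$ (using $e_i=e_n$), and that the $\phi(x)$-terms pick up $\zeta^{s(e_j-e_i)}\cdot\zeta^s=\zeta^{s(e_j-e_i+1)}$, the whole expression is governed by a single scalar sum $S(a):=\frac1p\sum_{s=0}^{p-1}\frac{\zeta^{sa}}{\zeta^s x-y}$, occurring with $a=e_m-e_n$ as the coefficient of $b(x)\phi(y)$ and with $a=e_j-e_i+1$ as the coefficient of $b(y)\phi(x)$.

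The technical heart is then the closed evaluation of $S(a)$. Expanding $\frac1{\zeta^s x-y}=\sum_{n\geq 0}\zeta^{-s(n+1)}y^n x^{-(n+1)}$ and using the character orthogonality $\frac1p\sum_{s=0}^{p-1}\zeta^{s(a-n-1)}=1$ precisely when $n\equiv a-1\pmod p$ (and $0$ otherwise), one obtains
$$ S(a)=\sum_{n\equiv a-1\,(p),\ n\geq 0}\frac{y^n}{x^{n+1}}=\frac{x^{p-1-n_0}\,y^{n_0}}{x^p-y^p}\;, $$
where $n_0\in\{0,\dots,p-1\}$ is the residue of $a-1$ modulo $p$. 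It remains to match $n_0$ with the notation $\modp{\cdot}$: since $\modp{i}\in\{1,\dots,p\}$ and $\modp{i}\equiv i\pmod p$, we get $n_0=\modp{a}-1$, hence $x^{p-1-n_0}=x^{p-\modp{a}}$ and $y^{n_0}=y^{\modp{a}-1}$. Taking $a=e_m-e_n$ and $a=e_j-e_i+1$ and substituting back reproduces exactly the two numerators in \eqref{eq:b-Poisson}, the $\delta_{jm}$-term attaching to $+b_{in}$ and the $\delta_{ni}$-term to $-b_{mj}$.

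I expect the main obstacle to be purely the modular bookkeeping of this last step rather than any conceptual difficulty: one must keep the residue conventions consistent so that the exponents land in the ranges prescribed by $\modp{\cdot}$, and verify that the $\delta_{jm}$ and $\delta_{ni}$ contributions carry $b_{in}$ and $b_{mj}$ with the correct signs. As a consistency check, although each $S(a)$ individually carries the denominator $x^p-y^p$, substituting $b(x)=x^{e_i-e_n}\alpha_{in}(x^p)$ makes the combined numerator divisible by $x^p-y^p$, so the reduced bracket is a genuine regular function — in accordance with \eqref{eq:red-poisson} being a Poisson bracket of polynomials, exactly as the factor $\frac{1}{x-y}$ in \eqref{eq:pb_with_pols} always cancels.
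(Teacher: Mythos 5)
Your proof is correct and follows essentially the same route as the paper's: expand $\tilde b_{ij}(x)$ via \eqref{eq:t-b}, apply \eqref{eq:pb_with_pols}, pull out the eigenvalue phases using $e_j=e_m$ (resp.\ $e_i=e_n$), and reduce everything to a single root-of-unity sum. The only cosmetic differences are that you handle a general $\phi(x)=x\tilde\phi(x^p)$ directly via $\phi(\zeta x)=\zeta\phi(x)$ (the paper first takes $\phi=x^\mu$ with $\mu\equiv 1\pmod p$ and then forms linear combinations), and you evaluate the sum $S(a)$ by geometric series plus character orthogonality, whereas the paper's Lemma \ref{lem:p-sum} establishes the equivalent identity by partial fractions.
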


\begin{proof}
Let $\mu$ be a positive integer with $\mu\leq q+1$ and $\mu\equiv 1\pmod p$.
We calculate the r.h.s of \eqref{eq:red-poisson} 
using \eqref{eq:action_on_xis} and \eqref{eq:t-b}:
\begin{align*}
  r.h.s 
  &= 
  \frac{1}{p}\sum_{k=1}^p \zeta^{k(e_j-e_i)}
  \{\ell_{ij}(\zeta^k x), \ell_{mn}(y)\}_\mu   
  \big|_{M_{p,q}^{\Delta_{\ue}}}
  \\
  &\stackrel{\eqref{eq:pb_with_pols}}{=}
  \frac{1}{p}\sum_{k=1}^p \zeta^{k(e_j-e_i)}
  \left( \delta_{mj} \frac{\ell_{in}(\zeta^k x)y^\mu 
                           - \ell_{in}(y) (\zeta^k x)^\mu}{\zeta^k x -y} 
   - \delta_{in} \frac{\ell_{mj}(\zeta^k x) y^\mu 
                       - \ell_{mj}(y) (\zeta^k x)^\mu}{\zeta^k x -y} 
  \right) \Big|_{M_{p,q}^{\Delta_{\ue}}}
  \\
  &=
  \frac{1}{p}\sum_{k=1}^p \zeta^{k(e_j-e_i)}   
  \left( \delta_{mj} \frac{b_{in}(\zeta^k x)y^\mu 
                           - b_{in}(y) \zeta^k x^\mu}{\zeta^k x -y} 
   - \delta_{in} \frac{b_{mj}(\zeta^k x) y^\mu 
                       - b_{mj}(y) \zeta^k x^\mu}{\zeta^k x -y} 
  \right) 
  \\
  &=
  \frac{\delta_{jm}}{p} \left(b_{in}(x) y^\mu \sum_{k=1}^p 
                     \frac{\zeta^{k(e_j-e_n)}}{\zeta^k x -y}  
         -   b_{in}(y) x^\mu \sum_{k=1}^p 
                          \frac{\zeta^{k(e_j-e_i+1)}}{\zeta^k x -y} 
         \right)
  \\
  & \qquad -
  \frac{\delta_{in}}{p} \left(b_{mj}(x) y^\mu \sum_{k=1}^p 
                     \frac{\zeta^{k(e_m-e_i)}}{\zeta^k x -y}  
         -   b_{mj}(y) x^\mu \sum_{k=1}^p 
                          \frac{\zeta^{k(e_j-e_i+1)}}{\zeta^k x -y} 
         \right)\;. 
\end{align*}
By using Lemma \ref{lem:p-sum} below we get
\begin{align*}
  r.h.s. = \,
  &\delta_{jm} 
  \frac{b_{in}(x) x^{p-\modp{e_m-e_n}} y^{\modp{e_m-e_n}-1+\mu}
        - b_{in}(y) y^{\modp{e_j-e_i+1}-1} x^{p-\modp{e_j-e_i+1}+\mu}}
  {x^p - y^p}
  \\
  & \quad - 
  \delta_{in} 
  \frac{b_{mj}(x) x^{p-\modp{e_m-e_n}} y^{\modp{e_m-e_n}-1+\mu} 
        - b_{mj}(y) y^{\modp{e_j-e_i+1}-1} x^{p-\modp{e_j-e_i+1}+\mu}}
  {x^p - y^p}\;.
\end{align*}
This proves the formula when $\phi(x)=x^\mu$, with $\mu$ as above. Taking a linear combination over all $x^\mu$
which appear in $\phi(x)$ leads to the general formula.
\end{proof}

\begin{lemma}\label{lem:p-sum}
For $l=1,\ldots,p$, we have 
\begin{align*}
  \sum_{k=1}^p \frac{\zeta^{kl}}{t-\zeta^k} 
  =
  \frac{p \, t^{l-1}}{t^p-1}\;.
\end{align*}
\end{lemma}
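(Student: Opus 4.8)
The plan is to recognize the asserted identity as the partial-fraction expansion of the rational function on the right-hand side. Since $\zeta=e^{2\pi i/p}$ is a primitive $p$-th root of unity, the values $\zeta^1,\zeta^2,\dots,\zeta^p$ run through \emph{all} $p$-th roots of unity, each exactly once. Consequently the $\zeta^k$ are pairwise distinct and
\begin{equation*}
  t^p-1=\prod_{k=1}^p\bigl(t-\zeta^k\bigr),
\end{equation*}
so that the denominator $t^p-1$ has only simple zeros, located precisely at the points $\zeta^k$ appearing in the sum.

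First I would check that the left-hand side is a \emph{proper} rational function. Because $1\leq l\leq p$, the numerator $p\,t^{l-1}$ has degree $l-1\leq p-1$, strictly less than the degree $p$ of the denominator. Hence $\dfrac{p\,t^{l-1}}{t^p-1}$ has no polynomial part and admits a partial-fraction decomposition with simple poles,
\begin{equation*}
  \frac{p\,t^{l-1}}{t^p-1}=\sum_{k=1}^p\frac{A_k}{t-\zeta^k},
  \qquad
  A_k=\Res_{t=\zeta^k}\frac{p\,t^{l-1}}{t^p-1}.
\end{equation*}

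Next I would compute each residue explicitly. Since $\frac{d}{dt}(t^p-1)=p\,t^{p-1}$, the residue of a ratio at a simple zero of the denominator gives
\begin{equation*}
  A_k=\frac{p\,(\zeta^k)^{l-1}}{p\,(\zeta^k)^{p-1}}
     =(\zeta^k)^{\,l-p}
     =\zeta^{kl}\,(\zeta^{kp})^{-1}
     =\zeta^{kl},
\end{equation*}
where the final equality uses $\zeta^{kp}=(\zeta^p)^k=1$. Substituting $A_k=\zeta^{kl}$ back into the decomposition yields exactly the claimed formula.

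There is no real obstacle in this argument: it is a routine residue computation, and the uniqueness of the partial-fraction decomposition for a proper rational function with distinct simple poles makes the residue matching conclusive. The only two points demanding care are the degree inequality $l-1<p$, which is precisely what the hypothesis $1\leq l\leq p$ provides and which guarantees the absence of a polynomial term, and the simplification $(\zeta^k)^{\,l-p}=\zeta^{kl}$ via $\zeta^{kp}=1$. An equivalent route, should one prefer to avoid residues, is to clear the denominator $t^p-1$ and verify the resulting polynomial identity directly; but the partial-fraction computation above is the cleanest.
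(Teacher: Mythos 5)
Your proof is correct and follows essentially the same route as the paper: a partial-fraction expansion of $\frac{p\,t^{l-1}}{t^p-1}$ with coefficients computed as residues at the simple poles $\zeta^k$, using $\frac{d}{dt}(t^p-1)=p\,t^{p-1}$ and $\zeta^{kp}=1$ to simplify $(\zeta^k)^{l-p}=\zeta^{kl}$. Your explicit check that $l-1<p$ rules out a polynomial part is a small point of added care that the paper leaves implicit, but the arguments are the same.
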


\begin{proof}
We consider a partial fraction expansion of r.h.s:
$$
  \frac{p \, t^{l-1}}{t^p-1}
  =
  \sum_{k=1}^p \frac{a_k}{t-\zeta^k}\;.
$$
By multiplying $(t-\zeta^k)$ and setting $t=\zeta^k$ 
for $k=1,\ldots,p$, we get
$$
  a_k=\lim_{t \to \zeta^k} \frac{p \,t^{l-1} (t-\zeta^k)}{t^p-1}
  =
  \lim_{t \to \zeta^k} \frac{p \, t^{l-1}}{p \, t^{p-1}}
  =
  \zeta^{(l-p)k} 
$$
Thus the claim follows.   
\end{proof}

From Prop.~\ref{prop:reduced-vf}  
and Prop.~\ref{prop:reduced-PB},
we obtain the following result:
\begin{prop}\label{prop:reduced-HS}
(1) On $M_{p,q}^{\Delta_{\ue}}$, 
for integers $i,m,n$ such that 
$0 \leq i$ and
$0 \leq n \leq \lfloor q/p \rfloor$, 
we have the Hamiltonian vector fields given by
\begin{align}\label{eq:multi-h-reduced}
  \frac{\diff}{\diff t_{i,pm}} 
  = 
  -\{ \cdot ~, H_{i,p(m+n)}\}_{1+pn}^{\mathrm{red}}\;.
\end{align}
Here the Hamiltonian function $H_{i,pj}$ on $M_{p,q}^{\Delta_{\ue}}$
is given by
\begin{align}\label{eq:H-Mpq}
  H_{i,pj}(B(x))
  =
  \frac1{i+1}\Res_{x=0}{\Tr(B^{i+1}(x))\over x^{pj+1}}\;
\end{align}
for $B(x) = (b_{kl}(x))_{1 \leq k,l \leq p} 
\in M_{p,q}^{\Delta_{\ue}}$ .
Moreover, we have the Lax equation for \eqref{eq:multi-h-reduced}:
\begin{equation}\label{eq:vfs_reduced}
  \frac{\diff}{\diff t_{i,pm}}B(x)
=\lb{B(x),\left(\frac{B^i(x)}{x^{pm}}\right)_+}\;.
\end{equation}
(2) For each $0 \leq n \leq \lfloor q/p \rfloor$, 
$H_{i,pj}$ are  
Casimir functions with respect to $\PB_{pn+1}^{\mathrm{red}}$
if $i \geq 0$ and 
$j \in \{ 0, \dots, n, 
i\lfloor q/p \rfloor + n +1, \dots, 
(i+1) \lfloor q/p \rfloor \}.$
\end{prop}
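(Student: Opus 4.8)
The plan is to obtain both parts by restricting the unreduced multi-Hamiltonian structure of Proposition \ref{prop:reduced-vf} to the fixed point locus $M_{p,q}^{\Delta_{\ue}}$ and appealing to the Poisson reduction to a fixed point locus recorded in \cite[Sect.\ 5.4.3, Prop.\ 5.36]{PLV}, which was already used to define the reduced bracket \eqref{eq:red-poisson}. Throughout I take $\mu=1+pn$; since $\mu\equiv1\pmod p$, Proposition \ref{prop:reduced-PB} guarantees that $\sigma_{\Delta_{\ue}}$ is a Poisson automorphism of $(M_{p,q},\PB_\mu)$, so $\PB_\mu$ descends to $\PB_{1+pn}^{\mathrm{red}}$.

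For part (1), I would first check that $H_{i,p(m+n)}$ is $\sigma_{\Delta_{\ue}}$-invariant. Indeed $\Tr(L^{i+1}(x))$ is invariant under conjugation, so $\sigma_{\Delta_{\ue}}$ merely substitutes $x\mapsto\zeta x$ inside it; the residue in \eqref{eq:H-Mpq} then extracts the coefficient of $x^{p(m+n)}$, which is scaled by $\zeta^{p(m+n)}=1$. Hence $H_{i,p(m+n)}$ is invariant and restricts to the function \eqref{eq:H-Mpq} on $M_{p,q}^{\Delta_{\ue}}$. With both the Hamiltonian and the bracket $\sigma_{\Delta_{\ue}}$-invariant, the reduction of \cite[Prop.\ 5.36]{PLV} applies, and the unreduced identity \eqref{eq:multi-h-on-loop} specialized to $j=pm$, $\mu=1+pn$ (which reads $\diff/\diff t_{i,pm}=-\Pb{H_{i,p(m+n)}}_{1+pn}$) restricts to the reduced identity \eqref{eq:multi-h-reduced}.

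To obtain the Lax equation \eqref{eq:vfs_reduced}, I would restrict \eqref{eq:vfs_on_loop}. The decisive point is that $\sigma_{\Delta_{\ue}}$ is an algebra automorphism of $\Mat_p(\C)[x]$ (it is the composition of the algebra automorphism $L(x)\mapsto L(\zeta x)$ with conjugation), so it fixes $B^i(x)$ once it fixes $B(x)$; and that the truncation $(\cdot/x^{pm})_+$ commutes with $\sigma_{\Delta_{\ue}}$ \emph{because} the exponent is a multiple of $p$. Concretely, using \eqref{eq:action_on_xis} one finds $(\sigma_{\Delta_{\ue}}(L)/x^{j})_+=\zeta^{j}\,\sigma_{\Delta_{\ue}}\big((L/x^{j})_+\big)$, and $\zeta^{pm}=1$. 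Therefore $(B^i(x)/x^{pm})_+$ is $\sigma_{\Delta_{\ue}}$-invariant, hence so is the commutator $[B(x),(B^i(x)/x^{pm})_+]$, i.e.\ the right-hand side of \eqref{eq:vfs_on_loop} is tangent to the linear subspace $M_{p,q}^{\Delta_{\ue}}$; thus \eqref{eq:vfs_on_loop} restricts verbatim to \eqref{eq:vfs_reduced}.

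For part (2), a $\sigma_{\Delta_{\ue}}$-invariant Casimir of $\PB_\mu$ restricts to a Casimir of $\PB_\mu^{\mathrm{red}}$ (its reduced Hamiltonian vector field is the restriction of the unreduced one, which vanishes). Thus it suffices to select, among the unreduced Casimirs of Proposition \ref{prop:reduced-vf}(3) for $\mu=pn+1$, those of the form $H_{i,pj}$, i.e.\ to intersect the range $\{0,\dots,\mu-1\}\cup\{iq+\mu,\dots,(i+1)q\}$ with the multiples of $p$ and divide by $p$. The first block $\{0,\dots,pn\}$ contributes $j\in\{0,\dots,n\}$, and, using that $q$ is a multiple of $p$ in the situations of interest (so that $iq+\mu=p(i\lfloor q/p\rfloor+n)+1$ and $(i+1)q=p(i+1)\lfloor q/p\rfloor$), the second block contributes $j\in\{i\lfloor q/p\rfloor+n+1,\dots,(i+1)\lfloor q/p\rfloor\}$, as claimed. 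I expect the main obstacle to be precisely this index bookkeeping, together with making the abstract reduction identify the reduced vector field with the restriction of the unreduced one; once the $\sigma_{\Delta_{\ue}}$-invariance of the relevant Hamiltonians and brackets is established, everything else reduces to routine verification.
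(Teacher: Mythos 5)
Your proposal is correct and is essentially the paper's own (unwritten) argument: the paper simply asserts that the proposition follows from Proposition~\ref{prop:reduced-vf} and Proposition~\ref{prop:reduced-PB}, i.e.\ from restricting the unreduced multi-Hamiltonian structure to the fixed point locus via the reduction of \cite[Prop.~5.36]{PLV}, which is exactly what you carry out, including the key verifications (invariance of $H_{i,p(m+n)}$, the identity $(\sigma_{\Delta_{\ue}}(L)/x^{j})_+=\zeta^{j}\,\sigma_{\Delta_{\ue}}((L/x^{j})_+)$ with $\zeta^{pm}=1$, and tangency of the Lax vector fields to $M_{p,q}^{\Delta_{\ue}}$). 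Your insistence on $p\mid q$ in part (2) is warranted rather than a gap: for $p\nmid q$ the second block of the stated Casimir range can genuinely fail --- e.g.\ for $p=2$, $q=3$, $i=3$, $n=0$ one can exhibit $B\in M_{2,3}^{\Delta_{\uo}}$ with $[B,(B^3/x^8)_+]\neq 0$, so $H_{3,8}$ is not a Casimir of $\PB_{1}^{\mathrm{red}}$ --- and the paper only ever invokes the proposition for $q=d$ or $q=d+p$ with $d=pd'$, where your argument applies verbatim.
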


\subsection{Poisson structure on the quotient space}
\label{sect:poisson-quot}

We set $d = d' p$ and fix 
$\ue = (e_1,e_2,\cdots, e_p) \in \mathcal{E}$.
Now we construct the Poisson structure on
the quotient space $M_{ir}^{\Delta_{\ue}}/G_{\Delta_{\ue}}$
by using the {\it momentum map} 
of $G_{\Delta_{\ue}}$ acting on $M_{p,d+p}^{\Delta_{\ue}}$.
We use the following notations.
\begin{align}
\nonumber
&\widetilde{M}^{\Delta_{\ue}}
:= 
\{B(x) \in M_{p,d+p}^{\Delta_{\ue}} ~|~
  H_{0,d+p}(B(x)) = 0 \}
\subset M_{p,d+p}^{\Delta_{\ue}}\;,
\\
\label{eq:def-t0}
&T_0 := \{ (i,j) ~|~ 1 \leq i,j \leq p, ~ e_i = e_j \}\;.
\end{align}
Here $H_{0,d+p} \in \mathcal{O}(M_{p,d+p}^{\Delta_{\ue}})$
is the Hamiltonian function \eqref{eq:H-Mpq}.

We identify the Lie algebra $\mathrm{Lie}\,PGL_p(\C)$ of $PGL_p(\C)$ with $\Mat_p(\C)/\C \cdot \mathbb{I}$, and
regard the Lie algebra $\mathrm{Lie}\,G_{\Delta_{\ue}}$ of $G_{\Delta_{\ue}}$ as a subspace of it.

\begin{lemma}
We have
$$
  \mathrm{Lie}\,G_{\Delta_{\ue}} 
  =
  \langle E_{ij}; ~ (i,j) \in T_0 \rangle_{\C} 
  / \C \cdot \mathbb{I}_p\;. 
$$ 
\end{lemma}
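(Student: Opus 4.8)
The plan is to compute $\mathrm{Lie}\,G_{\Delta_{\ue}}$ as the fixed subspace of an infinitesimal automorphism. Recall from \eqref{eq:g-alpha} that $G_{\Delta_{\ue}}$ is the centralizer of $\Delta_{\ue}$ in $PGL_p(\C)$, which is the same as the fixed-point subgroup of the inner automorphism $\mathrm{conj}_{\Delta_{\ue}} : g \mapsto \Delta_{\ue} g \Delta_{\ue}^{-1}$ of $PGL_p(\C)$. Since $\Delta_{\ue}^p = 1$, this automorphism has finite order, so $\Delta_{\ue}$ is semisimple; hence over $\C$ the centralizer is smooth and its Lie algebra equals the fixed subspace of the differential $\Ad(\Delta_{\ue})$ acting on $\mathrm{Lie}\,PGL_p(\C) = \Mat_p(\C)/\C\mathbb{I}_p$. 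Thus I would reduce the claim to the identity
$$ \left(\Mat_p(\C)/\C\mathbb{I}_p\right)^{\Ad(\Delta_{\ue})} = \langle E_{ij}; (i,j)\in T_0\rangle_{\C}/\C\mathbb{I}_p. $$

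Next I would diagonalize $\Ad(\Delta_{\ue})$ on $\Mat_p(\C)$, which is already recorded in the proof of Lemma \ref{lem:mdelta}: one has $\Ad(\Delta_{\ue})(E_{ij}) = \zeta^{e_i - e_j}E_{ij}$, so $\Mat_p(\C) = \bigoplus_{k=0}^{p-1} D_{\ue,k}$ is the eigenspace decomposition, with $D_{\ue,k}$ the $\zeta^k$-eigenspace. Comparing the definition of $D_{\ue,0}$ with \eqref{eq:def-t0}, the eigenvalue-$1$ space is exactly $D_{\ue,0} = \langle E_{ij}; (i,j)\in T_0\rangle_{\C}$, and note that $\mathbb{I}_p \in D_{\ue,0}$.

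Finally I would descend to the quotient. Writing $X = \sum_k X_k$ with $X_k \in D_{\ue,k}$, the class $[X]$ is fixed by $\Ad(\Delta_{\ue})$ precisely when $\Ad(\Delta_{\ue})X - X = \sum_{k\neq 0}(\zeta^k-1)X_k$ lies in $\C\mathbb{I}_p$. The main (and essentially the only) subtlety is this scalar ambiguity arising from passing to $PGL_p(\C)$; I would resolve it by observing that $\C\mathbb{I}_p \subset D_{\ue,0}$ while $\sum_{k\neq 0}(\zeta^k-1)X_k \in \bigoplus_{k\neq 0} D_{\ue,k}$, so by directness of the sum both sides must vanish. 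Since $\zeta^k \neq 1$ for $k \not\equiv 0 \pmod p$, this forces $X_k = 0$ for all $k\neq 0$, i.e.\ $X \in D_{\ue,0}$. Hence the fixed subspace of the quotient is $D_{\ue,0}/\C\mathbb{I}_p = \langle E_{ij}; (i,j)\in T_0\rangle_{\C}/\C\mathbb{I}_p$, which is the asserted description; everything else is the routine bookkeeping of the eigenspace computation.
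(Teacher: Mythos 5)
Your proposal is correct and follows essentially the same route as the paper: both reduce the claim to computing the fixed classes of $\Ad(\Delta_{\ue})$ on $\Mat_p(\C)/\C\,\mathbb{I}_p$ (i.e.\ the condition $\Delta_{\ue}x\Delta_{\ue}^{-1}=x+c\,\mathbb{I}_p$) and then use $\Ad(\Delta_{\ue})(E_{ij})=\zeta^{e_i-e_j}E_{ij}$ to force $c=0$ and kill the entries with $e_i\neq e_j$. The only cosmetic differences are that you justify the identification of $\mathrm{Lie}\,G_{\Delta_{\ue}}$ with the fixed subspace via smoothness of centralizers of semisimple elements and dispose of the scalar $c$ by directness of the eigenspace decomposition, where the paper simply reads off the diagonal entries.
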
 

\begin{proof}
By the definition of $G_{\Delta_{\ue}}$ \eqref{eq:g-alpha},
a class of $x=(x_{ij})_{1 \leq i,j \leq p} \in \Mat_p(\C)$ in $\mathrm{Lie}\,PGL_p(\C)$ belongs to
$\mathrm{Lie}\,G_{\Delta_{\ue}}$ if and only if there exists $c \in \C$ such that $\Delta_{\ue}\, x
\,\Delta_{\ue}^{-1} = x + c \mathbb{I}$, but the l.h.s. is $(\zeta^{e_i - e_j} x_{ij})_{1 \leq i,j \leq p}$ hence
$c=0$ and $x_{ij} = 0$ for $e_i \neq e_j$.
\end{proof}

Since $\widetilde{M}^{\Delta_{\ue}}$ is an affine space, its tangent space at any point can be identified with
$\widetilde{M}^{\Delta_{\ue}}$.  Thus, for $E \in \mathrm{Lie} G_{\Delta_{\ue}}$, the linear map
$$
  X_E : \widetilde{M}^{\Delta_{\ue}}\to \widetilde{M}^{\Delta_{\ue}}\;,\qquad B \mapsto [B, E]
$$
can be regarded as a vector field on $\widetilde{M}^{\Delta_{\ue}}$.

We take $\phi(x) = x \tilde{\phi}(x^p)$ where $\tilde{\phi}(x) = \sum_{k=0}^{d'+1} c'_k x^k$ and $c'_{d'+1} \neq
0$.
We write
\begin{align}\label{eq:tilde-mu}
\widetilde{\mu}: \mathrm{Lie} G_{\Delta_{\ue}} \to
\mathcal{O}(\widetilde{M}^{\Delta_{\ue}})
\end{align}
for the linear map 
which sends the class of 
$E_{ij}$ in $\mathrm{Lie} G_{\Delta_{\ue}}$
to 
$$
[(\sum_k b_{mn}^k x^k)_{mn} \mapsto
b_{ji}^{d+p}/c'_{d'+1}] 
\in \mathcal{O}(\widetilde{M}^{\Delta_{\ue}})\;.
$$ Note that $\widetilde{\mu}$ is well-defined because $\widetilde{\mu}(\mathbb{I})(B) = H_{0,d+p}(B) = 0$ for any
$B \in \widetilde{M}^{\Delta_{\ue}}$.

\begin{prop}\label{prop:moment-map} 
(1) The map $\widetilde{\mu}$ \eqref{eq:tilde-mu} is a Lie algebra homomorphism, i.e., for any $E, F\in\mathrm{Lie} \,G_{\Delta_{\ue}}$,
\begin{align}\label{eq:Lie-homo}
  \{\widetilde{\mu}(E), \widetilde{\mu}(F)\}_{\phi}^{\mathrm{red}} = \widetilde{\mu}([E,F])\;.
\end{align}
(2) For any $E \in \mathrm{Lie} G_{\Delta_{\ue}}$ and $B \in \widetilde{M}^{\Delta_{\ue}}$, we have
\begin{align}\label{eq:H-moment}
  X_{E}(B) 
   = - \left(\{ b_{ij}(x), 
~\widetilde{\mu}(E) \}_{\phi}^{\mathrm{red}}(B)
       \right)_{1 \leq i,j \leq p}\;,
\end{align}
where $b_{ij}(x) = \sum_{k} b_{ij}^k x^k$ (see \eqref{eq:b-def}).
\end{prop}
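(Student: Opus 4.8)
The plan is to isolate a single Poisson-bracket computation and deduce both statements from it. Since $\mathrm{Lie}\,G_{\Delta_{\ue}}$ is spanned by the classes of the matrix units $E_{ab}$ with $(a,b)\in T_0$, and $\widetilde{\mu}(E_{ab})=b_{ba}^{d+p}/c'_{d'+1}$, everything reduces to understanding the bracket of a coordinate function $b_{ij}(x)$ with a top-degree coordinate $b_{ba}^{d+p}$. The key identity I would establish is
$$
\{ b_{ij}(x), b_{ba}^{d+p} \}_\phi^{\mathrm{red}}
= - c'_{d'+1}\bigl( \delta_{jb}\, b_{ia}(x) - \delta_{ia}\, b_{bj}(x) \bigr),
$$
valid on $M_{p,d+p}^{\Delta_{\ue}}$ for every $(a,b)\in T_0$ and all $1\le i,j\le p$.

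To prove this identity I would start from the defining relation \eqref{eq:red-poisson} of the reduced bracket, which replaces the left-hand side by $\{\tilde{b}_{ij}(x), \ell_{ba}^{d+p}\}_\phi\big|_{M_{p,d+p}^{\Delta_{\ue}}}$, and then expand $\tilde{b}_{ij}(x)=\frac1p\sum_{m=1}^p \zeta^{m(e_j-e_i)}\ell_{ij}(\zeta^m x)$ via \eqref{eq:t-b} and \eqref{eq:action_on_xis}. The decisive simplification is that $\ell_{ba}^{d+p}$ is the top coefficient: in the generating-function bracket \eqref{eq:pb_with_pols}, the coefficient of $y^{d+p}$ is the leading coefficient, governed purely by the behaviour as $y\to\infty$, and since $\ell_{ia}(y),\ell_{bj}(y)$ have degree $\le d+p$ while $\phi(y)$ has leading term $c'_{d'+1}y^{d+p+1}$, only that leading term survives. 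This gives at once $\{\ell_{ij}(u), \ell_{ba}^{d+p}\}_\phi = -c'_{d'+1}(\delta_{jb}\ell_{ia}(u) - \delta_{ia}\ell_{bj}(u))$. Substituting $u=\zeta^m x$, multiplying by $\zeta^{m(e_j-e_i)}$ and averaging over $m$, the condition $e_a=e_b$ (that is, $(a,b)\in T_0$) makes the exponents match so that the two resulting sums collapse back into $\tilde{b}_{ia}(x)$ and $\tilde{b}_{bj}(x)$; restriction to $M_{p,d+p}^{\Delta_{\ue}}$ replaces these by $b_{ia}(x)$ and $b_{bj}(x)$, which is the claimed identity.

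Granting the identity, part (2) is immediate: one has $X_{E_{ab}}(B)_{ij} = [B, E_{ab}]_{ij} = \delta_{bj}\,b_{ia}(x) - \delta_{ai}\,b_{bj}(x)$, whereas dividing the identity by $c'_{d'+1}$ gives $\{b_{ij}(x), \widetilde{\mu}(E_{ab})\}_\phi^{\mathrm{red}} = -(\delta_{jb}\,b_{ia}(x) - \delta_{ia}\,b_{bj}(x))$, so the two match with exactly the sign in \eqref{eq:H-moment}; by bilinearity this holds for all $E$. For part (1) I would take the coefficient of $x^{d+p}$ in the identity with $(i,j)$ replaced by $(d,c)$, obtaining $\{b_{dc}^{d+p}, b_{ba}^{d+p}\}_\phi^{\mathrm{red}} = -c'_{d'+1}(\delta_{cb}\,b_{da}^{d+p} - \delta_{da}\,b_{bc}^{d+p})$. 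Comparing this against $[E_{ab}, E_{cd}] = \delta_{bc}E_{ad} - \delta_{da}E_{cb}$ and the definition of $\widetilde{\mu}$ verifies $\{\widetilde{\mu}(E_{ab}), \widetilde{\mu}(E_{cd})\}_\phi^{\mathrm{red}} = \widetilde{\mu}([E_{ab}, E_{cd}])$. Finally, summing the identity over the diagonal units $(a,a)$ shows $\{b_{ij}(x), H_{0,d+p}\}_\phi^{\mathrm{red}}=0$, so both formulas are insensitive to the class of $\bbI_p$ and descend to $\mathrm{Lie}\,G_{\Delta_{\ue}}$.

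I expect the genuine obstacle to lie in the averaging step of the second paragraph: keeping the three families of indices apart (the free $i,j$, the matrix-unit labels $a,b$, and the summation index $m$) and checking that $e_a=e_b$ is precisely what is needed to recombine $\frac1p\sum_m \zeta^{m(\cdot)}\ell_{\bullet\bullet}(\zeta^m x)$ into the averaged coordinate $\tilde{b}_{\bullet\bullet}(x)$. Everything else is either formal (bilinearity and the derivation property of the bracket) or a one-line leading-order computation once the top-coefficient reduction is in place.
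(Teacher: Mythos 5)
Your proposal is correct, and its skeleton matches the paper's: the paper's own proof of (2) rests on exactly your key identity (it appears there as \eqref{eq:b-br}, with $(m,n)\in T_0$ in place of your $(b,a)$), also proved by a leading-coefficient extraction, and both parts are then read off just as you do. Where you genuinely diverge is in how the identity is obtained. The paper works \emph{downstream} of its explicit reduced-bracket formula \eqref{eq:b-Poisson}: it multiplies by $x^p-y^p$, and isolates the coefficient of $y^{d+2p}$, which forces a case analysis over $T_0$, $T_+$, $T_-$ to control the exponents $\modp{e_m-e_n}$ and $\modp{e_j-e_i+1}$; part (1) is then computed separately, again by citing \eqref{eq:b-Poisson}. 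You instead work \emph{upstream}, from the defining relation \eqref{eq:red-poisson} together with the unreduced generating-function formula \eqref{eq:pb_with_pols}: you extract the top $y$-coefficient before averaging (a one-line computation, since only the leading term $c'_{d'+1}y^{d+p+1}$ of $\phi$ survives), and then recombine the averaged sums using \eqref{eq:t-b} and $e_a=e_b$. This buys you two things: you never need \eqref{eq:b-Poisson} or its $\modp{\cdot}$ bookkeeping, and a single identity yields both (1) and (2), whereas the paper treats them as two computations. Your closing observation that summing over diagonal units gives $\{b_{ij}(x),H_{0,d+p}\}_\phi^{\mathrm{red}}=0$ is also a nice bonus: it shows $H_{0,d+p}$ is a Casimir of the reduced bracket, which simultaneously justifies restricting the bracket to $\widetilde{M}^{\Delta_{\ue}}$ and the descent modulo $\C\cdot\bbI_p$ (the paper handles the latter only by noting $H_{0,d+p}$ vanishes on $\widetilde{M}^{\Delta_{\ue}}$).
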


\begin{proof}
It is enough to check the claims for the generators of 
$\mathrm{Lie} \,G_{\Delta_{\ue}}$.
\\
(1) 
For $B \in \widetilde{M}^{\Delta_{\ue}}$, we have
$$ 
\{\widetilde{\mu}(E_{ij}), \widetilde{\mu}(E_{kl})\}_{\phi}^{\mathrm{red}}(B)
=
\frac{1}{(c'_{d'+1})^2} \{ b_{ji}^{d+p}, ~b_{lk}^{d+p} 
\}_{\phi}^{\mathrm{red}} 
\stackrel{\eqref{eq:b-Poisson}}{=}
\frac{1}{c'_{d'+1}} (\delta_{kj} b_{li}^{d+p} - \delta_{il} b_{jk}^{d+p})\;,
$$
and 
\begin{align*}
&\widetilde{\mu}([E_{ij},E_{kl}])(B) 
= \widetilde{\mu}(\delta_{jk} E_{il} - \delta_{il} E_{kj})(B)
= \delta_{jk} \,\widetilde{\mu}(E_{il})(B) 
  - \delta_{il} \, \widetilde{\mu}(E_{kj})(B)\;.
\end{align*} 
These two coincide and the claim follows.
\\
(2) We write $B = (\beta_{ij})_{ij} \in \widetilde{M}^{\Delta_{\ue}}$.
We take $(n,m) \in T_0$
and calculate both sides of \eqref{eq:H-moment} for $E = E_{nm}$.
The l.h.s. of \eqref{eq:H-moment} is: 
\begin{align}\label{eq:B-E}
  X_{E_{nm}}(B) = [B,~E_{nm}] =\sum_{1\leq i \leq p} \beta_{in} E_{im} - \sum_{1\leq j \leq p} \beta_{mj} E_{nj}\;.   
\end{align}
We claim that for $1 \leq i,j \leq p$,
\begin{align}\label{eq:b-br}
\frac{1}{c'_{d'+1}} 
\{ b_{ij}(x), b_{mn}^{p+d} \}_{\phi}^{\mathrm{red}} 
= 
-\delta_{jm} b_{in}(x) + \delta_{in} b_{mj}(x)\;.
\end{align}
Then the $(i,j)$ entry of the r.h.s. of \eqref{eq:H-moment} 
is obtained by taking the negative of the value of \eqref{eq:b-br} at 
$B \in \widetilde{M}^{\Delta_{\ue}}$.
This coincides with the $(i,j)$ entry of \eqref{eq:B-E}.

We prove \eqref{eq:b-br}. For $1\leqs \a,\b\leqs p$, we define $f_{\a\b}\in
\mathcal{O}(\widetilde{M}^{\Delta_{\ue}})[x,y]$ by:
\begin{equation*}
  f_{\a\b}(x,y):=b_{\a\b}(x)x^{p-\modp{e_m-e_n}}y^{\modp{e_m-e_n}-1} \phi(y)-b_{\a\b}(y) y^{\modp{e_j-e_i+1}-1} x^{p-\modp{e_j-e_i+1}}\phi(x)\;.
\end{equation*}%
We compute the highest order term in $y$ of $f_{in}(x,y)$.  Set $T_+ := \{ (i,j) ~|~ 1 \leq i,j \leq p, ~ e_i < e_j
\}$ and $T_- := \{ (i,j) ~|~ 1 \leq i,j \leq p, ~ e_i > e_j \}$ and recall from (\ref{eq:def-t0}) the definition of
$T_0$.  First we assume $(i,m) \in T_0 \cup T_+$.  Then we have $(i,n) \in T_0 \cup T_+$, $\modp{e_m-e_n} = p$ and
$\modp{e_m-e_i+1} = e_m-e_i+1$, and we get
$$
f_{in}(x,y) = b_{in}(x) y^p \tilde{\phi}(y^p) - b_{in}(y) y^{e_m-e_i} x^{p-e_m+e_i}\tilde{\phi}(x^p)\;.
$$
Thus, the highest order term is $y^{d+2p} b_{in}(x) c'_{d'+1}$.
Next, we consider the case $(i,m) \in T_0$.
Since we have $(i,n) \in T_-$, 
$\modp{e_m-e_n} = p$ and $\modp{e_m-e_i+1} = p+e_m-e_i+1$.
Then we have 
$$
f_{in}(x,y) = b_{in}(x) y^p \tilde{\phi}(y^p) - b_{in}(y) y^{p+e_m-e_i} x^{-e_m+e_i}\tilde{\phi}(x^p)\;,
$$
which has as highest order term $y^{d+2p} b_{in}(x) c'_{d'+1}$.
In the same manner, 
we see that the highest order term in $y$ of $f_{mj}(x,y)$   
is $y^{d+2p} b_{mj}(x) c'_{d'+1}$. 
From \eqref{eq:b-Poisson}, we have
$$(x^p - y^p) \{ b_{ij}(x), b_{mn}(y) \}_{\phi}^{\mathrm{red}}
  = \delta_{jm} f_{in}(x,y) - \delta_{in} f_{mj}(x,y)\;. 
$$ 
By taking the coefficients of $y^{d+2p}$ in the above equality, we obtain \eqref{eq:b-br}.
\end{proof}

We define a map
$$
  \mu : ~\widetilde{M}^{\Delta_{\ue}} \to (\mathrm{Lie} G_{\Delta_{\ue}})^*
:= \mathrm{Hom}(\mathrm{Lie} G_{\Delta_{\ue}}, \C)
$$
by
$\mu(B)(E) := \widetilde{\mu}(E)(B)$
for all $B \in \widetilde{M}^{\Delta_{\ue}}$
and $E \in \mathrm{Lie} G_{\Delta_{\ue}}$.
The map $\widetilde{\mu}$ and $\mu$ are respectively called 
the {\it comomentum map} and the {\it momentum map} 
of $G_{\Delta_{\ue}}$ acting on $\widetilde{M}^{\Delta_{\ue}}$
(cf. \cite[\S 5.4.4]{PLV}).

We apply a general result on the momentum map 
(\cite[Prop.~5.39]{PLV}) to construct the Poisson bracket on
$M_{ir}^{\Delta_{\ue}}/G_{\Delta_{\ue}}$ induced by 
$\PB^{\text{red}}_{\phi}$  on $\widetilde{M}^{\Delta_{\ue}}$.
Let $\eta$ be a natural embedding 
$\eta : M^{\Delta_{\ue}}:= M_{p,d}^{\Delta_{\ue}}
 \hookrightarrow \widetilde{M}^{\Delta_{\ue}}$ 
given by $\eta(B) = B$. 
The image of $\eta$ is determined by the following conditions:
$B \in \Im \eta$ if and only if
$$
b^{p+d}_{ij}(B) = 0 \text{ for } (i,j) \in T_0,
\quad 
b^{p+d-k}_{i,j}(B) = 0 \text{ for } (i,j) \in T_{\pm}, 1\leq k \leq p-1\;.
$$
The first condition is nothing but 
the defining equation of $\mu^{-1}(0)$.
Note that $G_{\Delta_{\ue}}$ freely acts on $\eta(M_{ir}^{\Delta_{\ue}})$
and $\eta(M_{ir}^{\Delta_{\ue}}) \subset \mu^{-1}(0)$ is invariant 
under the action of $G_{\Delta_{\ue}}$.
By Prop.~\ref{prop:moment-map},
we can apply
the theory of Poisson reduction \cite[Prop.~5.39]{PLV},
and obtain the Poisson bracket on 
$\eta(M_{ir}^{\Delta_{\ue}})/G_{\Delta_{\ue}}$ induced by 
$\PB^{\text{red}}_{\phi}$ 
on $\widetilde{M}^{\Delta_{\ue}}$.
It is passed to the Poisson bracket on
$M_{ir}^{\Delta_{\ue}}/G_{\Delta_{\ue}}$.
We write $\PB_{\phi}^{\text{quo}}$ for the induced Poisson bracket 
on $M_{ir}^{\Delta_{\ue}}/G_{\Delta_{\ue}}$.

Let $i \geq 0$ and $m \geq 0$. 
Due to the Lax equation \eqref{eq:vfs_reduced}, 
the restriction of the vector field $\diff/\diff t_{i,pm}$ on 
$\widetilde{M}^{\Delta_{\ue}}$ to $\Im \eta$
is tangent to $\Im \eta$. 
The induced vector field
$M_{ir}^{\Delta_{\ue}}/G_{\Delta_{\ue}}$ 
is denoted by $Y_{i,pm}$.
For a $G_{\Delta_{\ue}}$-invariant function
$f \in \cO(\widetilde{M}^{\Delta_{\ue}})$ on 
$\widetilde{M}^{\Delta_{\ue}}$,
we denote by $\overline{f}$ the corresponding function
on $M_{ir}^{\Delta_{\ue}}/G_{\Delta_{\ue}}$.

\begin{prop}\label{prop:hvf-quo}
Let $0 \leq n \leq d'+1$.
\\
(1) 
Let $i \geq 0$ and $m \geq 0$. 
Then $Y_{i,pm}$ is Hamiltonian, i.e., 
$$
  Y_{i,pm}
  = \{ \cdot ,~ \overline{H}_{i,p(m+n)} \}_{1+pn}^{\mathrm{quo}}\;,
$$
where $H_{i,p(m+n)}$ is introduced in \eqref{eq:H-Mpq}.
\\
(2) 
Suppose 
$0 \leq i$ and  
$j \in \{0, \ldots, n, id'+n, \ldots,(i+1) d'\}$.
Then
$\overline{H}_{i,pj}$ is a Casimir function 
with respect to $\PB_{1+pn}^{\mathrm{quo}}$.
\end{prop}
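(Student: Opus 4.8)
The plan is to deduce both parts from the reduced multi-Hamiltonian structure on $M_{p,d+p}^{\Delta_{\ue}}$ produced in Proposition~\ref{prop:reduced-HS}, together with the standard behaviour of Hamiltonian vector fields and of Casimirs under the Poisson reduction \cite[Prop.~5.39]{PLV} by which $\PB^{\mathrm{quo}}_{1+pn}$ is defined. First I would record the facts that make this descent legitimate. Since $\lfloor(d+p)/p\rfloor=d'+1$, Proposition~\ref{prop:reduced-HS}(2) with $i=0$ and $j=d'+1$ shows that $H_{0,d+p}$ is a Casimir of $\PB^{\mathrm{red}}_{1+pn}$ for every $n$ with $0\le n\le d'+1$; hence $\widetilde{M}^{\Delta_{\ue}}=\{H_{0,d+p}=0\}$ is a union of symplectic leaves, i.e. a Poisson submanifold, and $\PB^{\mathrm{red}}_{1+pn}$ restricts to it. Moreover every $H_{i,pj}$ is $G_{\Delta_{\ue}}$-invariant, being a function of $\Tr(B^{i+1})$, so it defines $\overline{H}_{i,pj}$ on the quotient. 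The content of the cited reduction is that, for a $G_{\Delta_{\ue}}$-invariant function $F$ whose field $\{\cdot,F\}^{\mathrm{red}}_{1+pn}$ is tangent to $\Im\eta$, the induced field on $M_{\ir}^{\Delta_{\ue}}/G_{\Delta_{\ue}}$ is the Hamiltonian field of $\overline{F}$ for $\PB^{\mathrm{quo}}_{1+pn}$, and that a Casimir of $\PB^{\mathrm{red}}_{1+pn}$ descends to a Casimir of $\PB^{\mathrm{quo}}_{1+pn}$.

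Granting this, part~(1) is essentially immediate. By Proposition~\ref{prop:reduced-HS}(1), $\diff/\diff t_{i,pm}=-\{\cdot,H_{i,p(m+n)}\}^{\mathrm{red}}_{1+pn}$ on $M_{p,d+p}^{\Delta_{\ue}}$, so this vector field is, up to sign, the Hamiltonian field of the invariant function $H_{i,p(m+n)}$; as noted just before the statement, the Lax form \eqref{eq:vfs_reduced} makes it tangent to $\Im\eta$, and its induced field is by definition $Y_{i,pm}$. The descent property above then identifies $Y_{i,pm}$ with the Hamiltonian field of $\overline{H}_{i,p(m+n)}$ for $\PB^{\mathrm{quo}}_{1+pn}$, which is the assertion of part~(1).

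For part~(2) I would split the two ranges of $j$. When $0\le j\le n$, Proposition~\ref{prop:reduced-HS}(2) already gives that $H_{i,pj}$ is a Casimir of $\PB^{\mathrm{red}}_{1+pn}$; its restriction to the Poisson submanifold $\widetilde{M}^{\Delta_{\ue}}$ is again a Casimir, and being invariant it descends to a Casimir $\overline{H}_{i,pj}$ of $\PB^{\mathrm{quo}}_{1+pn}$. When $id'+n\le j\le(i+1)d'$, I would instead apply part~(1) with $m:=j-n\ge id'\ge0$, giving $\{\cdot,\overline{H}_{i,pj}\}^{\mathrm{quo}}_{1+pn}=Y_{i,p(j-n)}$, and then show that this field vanishes by reading the Lax equation \eqref{eq:vfs_reduced} on the slice $\Im\eta=M_{p,d}^{\Delta_{\ue}}$, where $\deg B\le d$. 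If $j>id'+n$ then $p(j-n)>id\ge\deg(B^i)$, so $(B^i(x)/x^{p(j-n)})_+=0$ and $\diff/\diff t_{i,p(j-n)}$ is identically zero on $\Im\eta$, whence $Y_{i,p(j-n)}=0$. The boundary value $j=id'+n$ is the delicate one: there $p(j-n)=id=\deg(B^i)$, so $(B^i(x)/x^{id})_+=B_d^{\,i}$, the $i$-th power of the leading coefficient $B_d$, and $\diff/\diff t_{i,pid'}B=[B,B_d^{\,i}]$. Since $d=d'p\equiv0\pmod p$, Lemma~\ref{lem:mdelta} forces $B_d\in\langle E_{ab}\mid(a,b)\in T_0\rangle$, the centralizer of $\Delta_{\ue}$; hence $B_d^{\,i}$ represents a class in $\mathrm{Lie}\,G_{\Delta_{\ue}}$ and $[B,B_d^{\,i}]=X_{B_d^{\,i}}(B)$ is a fundamental vector field of the $G_{\Delta_{\ue}}$-action. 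Being tangent to the orbits it induces the zero field on the quotient, so $Y_{i,p(j-n)}=0$ and $\overline{H}_{i,pj}$ is Casimir.

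The hard part is precisely this upper range of $j$. The naive expectation, that the Casimirs of $\PB^{\mathrm{quo}}_{1+pn}$ are just the descents of those of $\PB^{\mathrm{red}}_{1+pn}$ on $M_{p,d+p}^{\Delta_{\ue}}$, would give the upper range $i(d'+1)+n+1\le j\le(i+1)(d'+1)$ rather than $id'+n\le j\le(i+1)d'$; the correct range emerges only after one restricts to the degree-$\le d$ slice. Handling the boundary $j=id'+n$ is the crux: the flow $[B,B_d^{\,i}]$ is not identically zero on $\Im\eta$, and one must recognize it as an infinitesimal gauge transformation, which forces its triviality on $M_{\ir}^{\Delta_{\ue}}/G_{\Delta_{\ue}}$. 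Verifying $B_d^{\,i}\in\mathrm{Lie}\,G_{\Delta_{\ue}}$ through Lemma~\ref{lem:mdelta} is the key technical input.
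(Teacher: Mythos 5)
Your proposal is correct and follows essentially the same route as the paper's own proof: part (1) by descent of the reduced Hamiltonian structure of Proposition \ref{prop:reduced-HS}(1) through the Poisson reduction, the range $0\leq j\leq n$ of part (2) directly from Proposition \ref{prop:reduced-HS}(2), the range $j>id'+n$ from the vanishing of $\left(B^i(x)/x^{p(j-n)}\right)_+$ on $\Im\eta$, and the boundary case $j=id'+n$ from the tangency of the flow to $G_{\Delta_{\ue}}$-orbits. In fact you supply slightly more detail than the paper at the crux: you justify the assertion $\left(B^i(x)/x^{pid'}\right)_+\in\langle E_{ab};\,(a,b)\in T_0\rangle_{\C}$ by identifying this term as $B_d^{\,i}$ and invoking Lemma \ref{lem:mdelta} together with the multiplicative closure of $D_{\ue,0}$, a step the paper states without proof.
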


\begin{proof}
(1) The claim follows from the definition of $Y_{i,pm}$.
\\
(2) 
The case of $j \in \{0, \ldots,n\}$ follows from
Prop.~\ref{prop:reduced-HS} (2). 
Next, let $j \in \{id'+n+1, \ldots,(i+1) d'\}$. 
When $m \geq id'+1$, 
the restricted vector field $\diff/\diff t_{i,pm}|_{\Im \eta}$ 
on $\Im \eta$ is zero,
since we have $\left(\frac{B^i}{x^{pm}}\right)_+ = 0$
in \eqref{eq:vfs_reduced} for $B \in \Im \eta$.
Thus we have $Y_{i,pm} = 0$,
and $\overline{H}_{i,p(m+n)}$ is a Casimir function 
with respect to $\PB_{1+pn}^{\mathrm{quo}}$.
Finally we consider the case $j=id'+n$. 
The restricted vector field 
$\diff/\diff t_{i,pid'}|_{\Im \eta}$ 
is tangent to $G_{\Delta_{\ue}}$-orbits
in $\Im \eta$, since in \eqref{eq:vfs_reduced}
we have $\left(\frac{B^i}{x^{pid'}}\right)_+ \in 
\langle E_{ij}; ~ (i,j) \in T_0 \rangle_{\C}$
for $B \in \Im \eta$.
Thus $Y_{i,pid'} = 0$,
and $\overline{H}_{i,p(id'+n)}$ is a Casimir function 
with respect to $\PB_{1+pn}^{\mathrm{quo}}$.
\end{proof}

\section{Algebraic complete integrability}
\label{sec:aci}

\subsection{Definition of aci and Beauville's result}
\label{sect:aci-beauville}
The following definition is taken from
\cite{V, adlermoerbekevanhaecke2004}.

\begin{defn}\label{def:aci}
Let $(M,\PB)$ be a complex Poisson manifold of rank $2r$, and let $H : M \to \C^s$ be a morphism with $s = \dim M -
r$.  For each $i =1, \dots, s$, we write $H_i \in \cO(M)$ for the $i$-th component of $H$.  We say that $(M, \{
\cdot, \cdot \}, H)$ is a Liouville integrable system if $H_1, \dots, H_s$ are pairwise in involution and
independent.  We say that $(M, \{ \cdot, \cdot \}, H)$ is an algebraic completely integrable system if moreover for
generic $m \in \C^s$, each connected component of the fiber $H^{-1}(m)$ is an affine part of an Abelian variety and
the restriction of each one of the Hamiltonian vector fields $X_{H_i}$ to each one of these affine parts is a
translation invariant vector field on the Abelian variety.
\end{defn}

Let $p \geq2, d \geq 1$ be integers.  We define the elementary symmetric polynomial $e_k$ and the power sum $f_k$
in $p$ variables $x_i ~(1 \leq i \leq p)$ by
$$
e_k := \sum_{1 \leq i_1<i_2< \cdots <i_k \leq p} 
          x_{i_1} \cdots x_{i_k},~
\quad
f_k := \sum_{1 \leq i \leq p} x_i^k
\qquad \text{for}~k=1, \dots, p\;.
$$
Then we have
$
\C[e_1,\ldots,e_p] = \C[f_1,\ldots,f_p] $ as subrings of $\C[x_1, \dots, x_p]$.  For $k=1, \dots, p$, we write
$S_k$ for the unique polynomial in $p$ variables satisfying $e_k = S_k(f_1, \ldots, f_p)$.  Put $\C[x]_{\leq m} :=
\{ h(x) \in \C[x] ~|~ \deg(h) \leq m \}$.  We define isomorphisms
$$
\psi : \oplus_{i=1}^p \C[x]_{\leq id} \cong V_{p,d}; \quad
  \psi((h_i(x))_{1 \leq i \leq p}):=
   y^p + \sum_{j=0}^{p-1} S_j(h_1(x), \dots, h_p(x))y^j
$$
(see \S \ref{sect:levelsets} for the definition of $V_{p, d}$)
and 
$$ \gamma : \oplus_{i=1}^p \C[x]_{\leq id} \cong \C^s; \quad
\gamma((\sum_{j=0}^{id} h_{ij}x^j)_{1 \leq i \leq d})
 := (h_{ij})_{1 \leq i \leq p, ~ 0 \leq j \leq id}
$$
with $s:=\sum_{i=0}^{p-1}((i+1)d+1)=\frac{1}{2}p(p+1)d+p$.

We have a well-defined morphism
$$
 \alpha : M_{p,d,\mathrm{ir}}/PGL_p(\C) 
 \to \oplus_{i=1}^p \C[x]_{\leq id};
\quad
\alpha([L(x)])
:=
\left(\frac{1}{i} \Tr L(x)^{i}\right)_{1 \leq i \leq p}\;,
$$
where $[L(x)]$ is the class of $L(x) \in M_{p,d,\mathrm{ir}}$
in $M_{p,d,\mathrm{ir}}/PGL_p(\C)$.
For a $PGL_p(\C)$-invariant function $f \in \mathcal{O}(M_{p,d+1})$,
we denote the corresponding function on 
$M_{p,d,ir}/PGL_p(\C)$ by $\widetilde{f}$. 
Then the components of the composition 
$\widetilde{H} := \gamma \circ \alpha :
M_{p,d,\mathrm{ir}}/PGL_p(\C) \to \C^s$ 
are given by
$\widetilde{H}_{ij} ~(0 \leq i \leq p-1, ~ 0 \leq j \leq (i+1)d)$,
where
$H_{ij}$ is given by \eqref{eq:Hamiltonian}.
The composition $\psi \circ \alpha$
agrees with $\tilde{\chi}$ 
defined in \eqref{eq:chitilde}.
We summarize this in 
the following lemma:

\begin{lemma}\label{lem:H-chi}
We have a commutative  diagram:
\begin{equation}
\xymatrix{
&M_{p,d,\mathrm{ir}}/PGL_p(\C)
\ar[dl]_{\tilde{\chi}}
\ar[d]^{\alpha} 
\ar[dr]^{\widetilde{H}}
& 
\\
V_{p,d} &
\ar[l]^{\psi}_{\cong}
\oplus_{i=1}^p \C[x]_{\leq id} 
\ar[r]_{\gamma}^{\cong}
&
\C^{s}.
}
\end{equation}
\end{lemma}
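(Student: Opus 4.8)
The diagram amounts to two triangle identities, namely $\widetilde{H}=\gamma\circ\alpha$ on the right and $\tilde{\chi}=\psi\circ\alpha$ on the left, and the plan is to verify each after first checking that $\alpha$ is well defined with the stated target. Since any $L(x)\in M_{p,d}$ has entries of degree $\leq d$, the matrix $L(x)^i$ has entries of degree $\leq id$, so $\frac1i\Tr L(x)^i\in\C[x]_{\leq id}$, which is precisely the $i$-th summand of $\oplus_{i=1}^p\C[x]_{\leq id}$; thus $\alpha$ lands where claimed. Conjugation invariance of the trace shows that $\frac1i\Tr L(x)^i$ depends only on the class $[L(x)]$, so $\alpha$ (and likewise $\tilde{\chi}$ and $\widetilde{H}$) descends to $M_{p,d,\mathrm{ir}}/PGL_p(\C)$. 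Once this is in place, both triangles are identities of morphisms into affine spaces, so it suffices to compare components.

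The right triangle is essentially a matter of unwinding definitions, since $\widetilde{H}$ is \emph{defined} as $\gamma\circ\alpha$; what must be checked is that its components are the functions $H_{ij}$ of \eqref{eq:Hamiltonian}. The map $\gamma$ reads off the coefficients of each entry, so the coefficient of $x^j$ in the $(i+1)$-st entry $\frac1{i+1}\Tr L(x)^{i+1}$ of $\alpha([L(x)])$ is $\frac1{i+1}$ times the coefficient of $x^j$ in $\Tr L(x)^{i+1}$. Writing $g(x)=\sum_k g_k x^k$ and using $\Res_{x=0}\bigl(g(x)/x^{j+1}\bigr)=g_j$, this coefficient equals $H_{ij}([L(x)])$; letting $i$ range over $0,\dots,p-1$ and $j$ over $0,\dots,(i+1)d$ exhausts all components. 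Here the factor $\frac1i$ built into $\alpha$ is exactly what matches the prefactor $\frac1{i+1}$ in the definition of $H_{ij}$, so the right-hand commutativity follows.

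The substance is the left triangle, and the plan is to evaluate both $\tilde{\chi}([L(x)])$ and $\psi(\alpha([L(x)]))$ as the characteristic polynomial of $L(x)$, written as a polynomial in $y$, and compare coefficients. Working over the function field of $\bbP^1$ (or at a generic point), let $\lambda_1,\dots,\lambda_p$ be the eigenvalues of $L(x)$; then $\Tr L(x)^i=f_i(\lambda)$ is the $i$-th power sum, while $\tilde{\chi}([L(x)])=\det(y\bbI_p-L(x))=\prod_k(y-\lambda_k)$ has the elementary symmetric functions $e_j(\lambda)$ as its coefficients. The defining relation $e_k=S_k(f_1,\dots,f_p)$ of the polynomials $S_k$ is Newton's identity; it is an identity in $\C[x_1,\dots,x_p]$, hence remains valid after specializing $x_k\mapsto\lambda_k$, where it becomes an identity among honest polynomials in $x$. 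Since $\alpha$ produces the (normalized) power sums and $\psi$ feeds them into the $S_j$, the map $\psi\circ\alpha$ reconstructs exactly the coefficients of $\det(y\bbI_p-L(x))$, giving $\psi\circ\alpha=\tilde{\chi}$.

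The one genuinely delicate point—the step I would check most carefully—is the bookkeeping of normalizations and of the index/sign conventions implicit in $\alpha$, $\psi$ and $S_k$: the factor $\frac1i$ in $\alpha$ must be reconciled with the way the power sums are fed into the $S_j$, and the assignment $j\mapsto(\text{coefficient of }y^j)$ must be matched against the ordering and signs of the elementary symmetric functions appearing in $\prod_k(y-\lambda_k)$. Once these conventions are pinned down so that $S_j$ applied to the normalized traces returns the coefficient of $y^j$ in $\det(y\bbI_p-L(x))$, no further input is needed: the remaining content is the universal expression of elementary symmetric functions through power sums, which is an identity of polynomials and therefore transports verbatim to the polynomial-valued symmetric functions of $L(x)$. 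Hence both triangles commute and the lemma follows.
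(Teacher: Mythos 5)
Your proposal is correct and follows exactly the route the paper intends: the paper offers no separate proof of Lemma \ref{lem:H-chi}, presenting it as a summary of the preceding constructions ($\widetilde{H}:=\gamma\circ\alpha$ by definition, and $\psi\circ\alpha=\tilde{\chi}$ because the $S_k$ express elementary symmetric functions of the eigenvalues of $L(x)$ through its power-sum traces). Your writeup merely fills in the routine details (well-definedness of $\alpha$, the residue identification of the components with $H_{ij}$, and Newton's identities), and you rightly flag the only delicate point, namely the paper's loose normalization and sign conventions in $\alpha$, $\psi$ and $S_k$, which must be read so that $S_j$ of the normalized traces returns the coefficient of $y^j$ in $\det(y\bbI_p-L(x))$.
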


For each $\mu=0, 1, \dots, d+1$,
we write $\PB_\mu^B$ for 
the Poisson bracket on $M_{p,d,ir}/PGL_p(\C)$ 
induced from 
$\PB_\mu$ \eqref{eq:poisson_loop} on $M_{p,d+1}$
(see \cite[\S 5]{B}, \cite[\S 2.2]{IK}). 
As we recalled in Theorem \ref{thm:bea},
the fiber $\widetilde{H}^{-1}(m)$ is isomorphic to 
$\Pic^{g_P-1}(C_P) \setminus \Theta_P$
for any $m \in \mathbb{C}^s$
such that $P = \psi \circ \gamma^{-1}(m) \in V_{p,d,\sm}$.
Based on this result, Beauville proved the following
fundamental theorem \cite[Theorem 5.3]{B}:

\begin{thm}[Beauville]
\label{th:Beauville}
For each $\mu=0, 1, \dots, d+1$, 
the triple $(M_{p,d,ir}/PGL_p(\C), \PB_\mu^B, \widetilde{H})$ is an
algebraic completely integrable system.
\end{thm}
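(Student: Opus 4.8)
The plan is to check the three requirements of Definition~\ref{def:aci} in turn for the triple $(M_{p,d,\ir}/PGL_p(\C), \PB_\mu^B, \widetilde{H})$, drawing on the results already established. The numerics must be pinned down first. A direct count gives $\dim\big(M_{p,d,\ir}/PGL_p(\C)\big) = (d+1)p^2 - (p^2-1) = dp^2+1$, and a short computation shows that $\dim M - g_P$, with $g_P = \frac12(p-1)(pd-2)$ as in Lemma~\ref{lem:genus}, equals $s = \frac12 p(p+1)d + p = \sum_{i=0}^{p-1}\big((i+1)d+1\big)$, which is exactly the number of components $\widetilde{H}_{ij}$ $(0\le i\le p-1,\ 0\le j\le (i+1)d)$ of $\widetilde{H}$. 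So the target rank is $2r$ with $r=g_P$, and there are precisely the right number of Hamiltonians.

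First I would settle Liouville integrability. The functions $H_{ij}$ of \eqref{eq:Hamiltonian} are built from traces of powers of $L(x)$, hence are $PGL_p(\C)$-invariant and descend to $\widetilde{H}_{ij}$ on the quotient; their pairwise involution with respect to $\PB_\mu^B$ is inherited from the involution upstairs on $M_{p,d+1}$ recorded in Proposition~\ref{prop:reduced-vf}~(2). For independence I would use Lemma~\ref{lem:H-chi}: since $\psi$ and $\gamma$ are isomorphisms, $\widetilde{H} = (\gamma\circ\psi^{-1})\circ\widetilde\chi$ has the same fibers as $\widetilde\chi$, and by Theorem~\ref{thm:bea} these are $g_P$-dimensional for $P\in V_{\sm}$; hence on a dense open set $\widetilde{H}$ has rank $\dim M - g_P = s$, i.e. the differentials $d\widetilde{H}_{ij}$ are generically independent. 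This already forces the Poisson rank to be at most $2g_P$; equality will follow once the Hamiltonian vector fields are shown to span the tangent space of the generic fiber (see below), the Casimir directions being those identified in Proposition~\ref{prop:reduced-vf}~(3).

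Next I would identify the generic fibers. For generic $m\in\C^s$ the polynomial $P := \psi\circ\gamma^{-1}(m)$ lies in the dense open locus $V_{\sm}$, and Lemma~\ref{lem:H-chi} identifies $\widetilde{H}^{-1}(m)$ with $\widetilde\chi^{-1}(P)=M_P/PGL_p(\C)$. By Theorem~\ref{thm:bea} this is $\Pic^{g_P-1}(C_P)\setminus\Theta_P$, which is connected (as $C_P$ is integral) and, since $\Theta_P$ is an ample divisor on the Abelian variety $\Pic^{g_P-1}(C_P)$, is an affine open subvariety of it. Thus each connected component of a generic fiber is an affine part of an Abelian variety, as required.

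The last condition is the crux and the main obstacle: each Hamiltonian vector field $X_{\widetilde{H}_{ij}}$ must restrict to a translation invariant field on $\Pic^{g_P-1}(C_P)$. By Proposition~\ref{prop:reduced-vf}~(1) the flows obey the Lax equations \eqref{eq:vfs_on_loop}, so they are isospectral and tangent to the fibers. To linearize them I would follow, along a flow, the invertible sheaf $\mathcal{L}_t$ attached to $L(x)$ by the correspondence of Theorem~\ref{thm:bea}, and prove that $\mathcal{L}_t = \mathcal{L}_0 \otimes \mathcal{M}_t$, where $t\mapsto \mathcal{M}_t$ is a one-parameter subgroup of $J(C_P)$ depending only on $(i,j)$ and on $C_P$ but not on $\mathcal{L}_0$; this is exactly Beauville's linearization theorem \cite[Thm.~5.3]{B}. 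Equivalently, one checks that the field extends to a regular vector field across $\Theta_P$ on the whole of $\Pic^{g_P-1}(C_P)$, whence it must be translation invariant, since an Abelian variety carries only invariant global vector fields; this also yields the reverse rank inequality, because the $g_P$ non-Casimir flows then span the tangent space of the fiber. Carrying out this line-bundle computation cleanly is the delicate point, and I would invoke Beauville's argument rather than reprove it.
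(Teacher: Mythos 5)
Your proposal is sound and matches the paper's treatment: the paper gives no proof of this statement at all, quoting it directly as Beauville's Theorem 5.3 of \cite{B}, and your argument likewise defers the essential content (the linearization of the Hamiltonian flows on ${\Pic}^{g_P-1}(C_P)$ and the resulting rank equality) to Beauville, while your supplementary verifications --- the dimension count $dp^2+1-g_P=s$, involution via Proposition \ref{prop:reduced-vf}~(2), generic independence via Lemma \ref{lem:H-chi} and Theorem \ref{thm:bea}, and the identification of generic fibers as affine parts of ${\Pic}^{g_P-1}(C_P)$ --- are all correct. Since the crux is invoked rather than reproved, your proof is essentially the same citation-based route as the paper's, augmented by correct bookkeeping.
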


\subsection{Main result}
We suppose from now on that $p$ is a prime number and that $d=d'p$ for some $d' \geq 1$.  Fix $\ue \in
\mathcal{E}$.  Similarly to Lemma \ref{lem:H-chi}, we shall construct the following commutative diagram:
\begin{equation}\label{eq:diag-ue}
\xymatrix{
&M_{p,d,\mathrm{ir}}^{\Delta_{\ue}}/G_{\Delta_{\ue}}
\ar[dl]_{\tilde{\chi}_{\ue}}
\ar[d]^{\alpha_{\ue}} 
\ar[dr]^{\overline{H}_{\ue}}
& 
\\
V_{p,d'} &
\ar[l]^{\overline{\psi}}_{\cong}
\oplus_{i=1}^p \C[x]_{\leq id'} 
\ar[r]_{\overline{\gamma}}^{\cong}
&
\C^{s'}
}
\end{equation}
with $s'=\sum_{i=0}^{p-1}((i+1)d'+1)=\frac{1}{2}p(p+1)d'+p$.
The isomorphisms $\overline{\psi}$ and $\overline{\gamma}$
are obtained by applying 
the constructions of $\psi$ and $\gamma$
for $d$ replaced by $d'$.
For any 
$B(x) \in M_{p,d,\mathrm{ir}}^{\Delta_{\ue}} \subset M_{p, d, \ir}$,
there exists a unique $Q(x, y) \in V_{p, d'}$
(resp. $H(x) \in \oplus_{i=1}^p \C[x]_{\leq id'}$)
such that $\chi([B(x)]) = Q(x^p, y)$
(resp. $\alpha([B(x)]) = H(x^p)$).
The image of the class of $B(x)$ in 
$M_{p,d,\mathrm{ir}}^{\Delta_{\ue}}/G_{\Delta_{\ue}}$
by the map $\tilde{\chi}_{\ue}$ (resp. $\alpha_{\ue}$)
is defined to be $Q(x, y)$ (resp. $H(x)$).
Finally, $\overline{H}_{\ue}$
is defined by
the collection of Hamiltonian functions 
$\overline{H}_{i,pj} ~(0 \leq i \leq p-1, ~ 0 \leq j \leq (i+1)d')$ 
introduced in \S \ref{sect:poisson-quot}.

For each $n=0, 1, \dots, d'+1$, 
we have defined the Poisson bracket 
$\PB^{\mathrm{quo}}_{1+pn}$ on 
$M^{\Delta_{\ue}}_{ir}/G_{\Delta_{\ue}}$ 
in \S \ref{sect:poisson-quot}.
As we proved in Theorem \ref{thm:each-conn-comp-jac},
the fiber $\overline{H}_{\ue}^{-1}(m)$ is isomorphic to 
the disjoint union of some affine subvarieties of $J(C_Q)$
for any $m \in \mathbb{C}^{s'}$
such that 
$Q = \overline{\psi}(\overline{\gamma}^{-1}(m)) \in V_{p,d',\spl}$.
Based on this result, 
we prove our main theorem:

\begin{thm}\label{thm:main}
For each $n=0, 1, \dots, d'+1$, 
the triple 
$$(M^{\Delta_{\ue}}_{p, d, \ir}/G_{\Delta_{\ue}}, 
\PB^{\mathrm{quo}}_{1+pn},\overline{H}_{\ue})
$$
is an
algebraic completely integrable system.
\end{thm}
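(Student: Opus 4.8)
The plan is to verify directly the two conditions in Definition \ref{def:aci}: first that the components of $\overline{H}_{\ue}$ are independent and pairwise in involution for $\PB^{\mathrm{quo}}_{1+pn}$ (Liouville integrability), and then that the generic fibre is an affine part of an Abelian variety on which the Hamiltonian vector fields act by translation. I would begin with a dimension count. Writing $g_Q := \frac{1}{2}(p-1)(pd'-2)$ for the genus of $C_Q$ (Lemma \ref{lem:genus}), one computes $\dim M^{\Delta_{\ue}}_{p,d} - \dim G_{\Delta_{\ue}} = p^2 d' + 1$ (using the explicit description of $M^{\Delta_{\ue}}_{p,d}$ and of $\mathrm{Lie}\,G_{\Delta_{\ue}}$), so the phase space has dimension $p^2 d' + 1$; since $s' = \frac{1}{2}p(p+1)d' + p$, a direct calculation gives $s' = (p^2 d' + 1) - g_Q$. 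Hence the relation $s = \dim M - r$ required in Definition \ref{def:aci} will hold exactly when the rank of $\PB^{\mathrm{quo}}_{1+pn}$ equals $2g_Q$, in which case the generic fibre of $\overline{H}_{\ue}$ must have dimension $g_Q$, consistently with Theorem \ref{thm:each-conn-comp-jac}.

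For the involution, the cleanest route is isospectrality. By the commutative diagram \eqref{eq:diag-ue} one has $\overline{H}_{\ue} = \overline{\gamma} \circ \overline{\psi}^{-1} \circ \tilde{\chi}_{\ue}$, so every component $\overline{H}_{i,pj}$ is a function of the characteristic polynomial, i.e.\ a spectral invariant of $C_Q$. On the other hand, Proposition \ref{prop:hvf-quo}(1) identifies the Hamiltonian vector field of $\overline{H}_{i,pj}$ with $Y_{i,p(j-n)}$, which by the Lax equation \eqref{eq:vfs_reduced} preserves the spectral curve. Therefore $\{\overline{H}_{i,pj}, \overline{H}_{i',pj'}\}^{\mathrm{quo}}_{1+pn}$ equals the derivative of the spectral invariant $\overline{H}_{i',pj'}$ along $Y_{i,p(j-n)}$, hence vanishes. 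Independence of the $s'$ functions follows from the dimension count: Theorem \ref{thm:each-conn-comp-jac} shows the generic fibre of $\tilde{\chi}_{\ue}$ (equivalently of $\overline{H}_{\ue}$, via the isomorphisms in \eqref{eq:diag-ue}) has dimension exactly $g_Q = \dim M - s'$, so $\overline{H}_{\ue}$ has maximal rank $s'$ on a dense open set. To pin the Poisson rank to $2g_Q$ I would count Casimirs: by Proposition \ref{prop:hvf-quo}(2) the functions $\overline{H}_{i,pj}$ with $j \in \{0,\dots,n\} \cup \{id'+n,\dots,(i+1)d'\}$ are Casimirs of $\PB^{\mathrm{quo}}_{1+pn}$, and summing over $0 \leq i \leq p-1$ produces $pd' + 2p - 1 = \dim M - 2g_Q$ of them, while the remaining $g_Q$ functions give non-trivial commuting flows; together with the independence this forces the rank to be $2g_Q$.

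For algebraic integrability, the identification of the fibres is immediate from Theorem \ref{thm:each-conn-comp-jac}: for generic $m$ with $Q = \overline{\psi}(\overline{\gamma}^{-1}(m)) \in V_{p,d',\spl}$, each connected component of $\overline{H}_{\ue}^{-1}(m) = M^{\Delta_{\ue}}_{p,d,P}/G_{\Delta_{\ue}}$ (with $P = Q(x^p,y)$) is an affine open subset of $J(C_Q)$, hence an affine part of an Abelian variety. It remains to show that each $X_{\overline{H}_{i,pj}} = Y_{i,p(j-n)}$ restricts to a translation-invariant field there. I would argue by comparison with Beauville's system: the reduced flow $Y_{i,p(j-n)}$ is the restriction to $M^{\Delta_{\ue}}_{p,d,P}$ of the Beauville Lax flow $\frac{\diff}{\diff t_{i,p(j-n)}}$ on $M_{p,d,P}$ (the same Lax equation, whose field is tangent to the fixed locus because its second index is divisible by $p$). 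Under Beauville's isomorphism the inclusion $M^{\Delta_{\ue}}_{p,d,P}/G_{\Delta_{\ue}} \hookrightarrow M_{p,d,P}/PGL_p(\C)$ corresponds, as in \eqref{eq:comm1}, to the inclusion of a component of $\Pic^{g_P-1}(C_P)^{\tau}$ into $\Pic^{g_P-1}(C_P)$. Since by Theorem \ref{th:Beauville} the Beauville flows are translation invariant on $\Pic^{g_P-1}(C_P)$, and each component of the fixed locus $\Pic^{g_P-1}(C_P)^{\tau}$ is a translate of the Abelian subvariety $(J(C_P)^{\tau})^0 \cong J(C_Q)$ to which the isospectral flow is tangent, the restriction of the translation-invariant field is again translation invariant.

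I expect the main obstacle to be this last step. The delicate points are, first, that the reduced vector fields genuinely coincide with the restrictions of Beauville's \emph{linear} flows (one must verify both the tangency to the fixed locus and that Proposition \ref{prop:hvf-quo}(1) pairs $\overline{H}_{i,pj}$ with the correct Beauville index $p(j-n)$), and second, the algebro-geometric fact that a translation-invariant vector field on an Abelian variety which is tangent to a translate of an Abelian subvariety restricts there to a translation-invariant field. The rank computation is the other place requiring care, since one must check that the exhibited Casimirs are functionally independent and exhaust the Casimirs generically; this can be corroborated by noting that the $g_Q$ non-Casimir flows span the tangent space $T_0 J(C_Q)$, being the $\tau$-invariant part of the span of Beauville's flows on $T_0 J(C_P)$.
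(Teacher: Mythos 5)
Your proposal is correct and follows essentially the same route as the paper's own proof: independence of the $\overline{H}_{i,pj}$ via the dimension count based on Theorem \ref{thm:each-conn-comp-jac} (the paper's Lemma \ref{lem:H-ind}), involutivity from the Lax/isospectral form of the reduced flows, the rank pinned down by playing the Casimirs of Proposition \ref{prop:hvf-quo}(2) against the $g_Q$ independent commuting flows, and algebraic integrability by identifying the reduced flows with restrictions of Beauville's translation-invariant flows to components of ${\Pic}^{g_P-1}(C_P)^{\tau}$ (the paper's Proposition \ref{prop:vflinear-quo}). The only differences are organizational, e.g.\ you make explicit the isospectrality argument and the fact that a translation-invariant field tangent to a translate of an abelian subvariety restricts to a translation-invariant field, both of which the paper uses implicitly.
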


To prove this theorem we use the following results.

\begin{lemma}\label{lem:H-ind}
For $0 \leq i \leq p-1,~ 0 \leq j \leq (i+1)d'$, the $G_{\Delta_{\ue}}$-invariant functions $H_{i,pj} \in
\mathcal{O}(M_{p, d,\mathrm{ir}}^{\Delta_{\ue}})$ are algebraically independent.
\end{lemma}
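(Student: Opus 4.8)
The plan is to read the statement off the commutative diagram \eqref{eq:diag-ue}. Since each $H_{i,pj}$ is built from the conjugation-invariant trace $\Tr B(x)^{i+1}$, it is $G_{\Delta_{\ue}}$-invariant and descends to the function $\overline{H}_{i,pj}$ on the quotient $X:=M^{\Delta_{\ue}}_{p,d,\mathrm{ir}}/G_{\Delta_{\ue}}$; because the quotient map is surjective (hence dominant), algebraic independence of the $H_{i,pj}$ on $M^{\Delta_{\ue}}_{p,d,\mathrm{ir}}$ is equivalent to algebraic independence of the $\overline{H}_{i,pj}$ on $X$. Now the $\overline{H}_{i,pj}$ ($0\le i\le p-1$, $0\le j\le(i+1)d'$) are precisely the $s'$ components of $\overline{H}_{\ue}=\overline{\gamma}\circ\alpha_{\ue}$, and \eqref{eq:diag-ue} gives $\overline{H}_{\ue}=(\overline{\gamma}\circ\overline{\psi}^{-1})\circ\tilde{\chi}_{\ue}$ with $\overline{\gamma}$ and $\overline{\psi}$ isomorphisms onto $\C^{s'}$ and $V_{p,d'}$. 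As there are exactly $s'=\dim V_{p,d'}$ of these functions, their algebraic independence is equivalent to the dominance of $\tilde{\chi}_{\ue}\colon X\to V_{p,d'}$, and it is this dominance I would establish.

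First I would carry out the dimension count. By Lemma \ref{lem:mdelta}, $M^{\Delta_{\ue}}_{p,d}=\bigoplus_{k=0}^{d}D_{\ue,k}x^k$; since $d=pd'$, among $k\in\{0,\dots,d\}$ the residue $\equiv 0 \pmod p$ occurs $d'+1$ times and every other residue occurs $d'$ times, so summing $\dim D_{\ue,k}=|\{(i,j):e_i-e_j\equiv k\}|$ yields $\dim M^{\Delta_{\ue}}_{p,d}=p^2d'+|T_0|$. By the description of $\mathrm{Lie}\,G_{\Delta_{\ue}}$ (spanned by the $E_{ij}$, $(i,j)\in T_0$, modulo $\C\cdot\mathbb{I}_p$) we get $\dim G_{\Delta_{\ue}}=|T_0|-1$, and as the action is free on the irreducible locus, $\dim X=p^2d'+1$. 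Using Lemma \ref{lem:genus} with $d$ replaced by $d'$, a direct computation gives $\dim X=s'+g_Q$, where $g_Q=\tfrac12(p-1)(pd'-2)$.

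I would then combine this with Theorem \ref{thm:each-conn-comp-jac} through a fibre-dimension argument. The space $M^{\Delta_{\ue}}_{p,d}$ is a linear (hence irreducible) space and $M^{\Delta_{\ue}}_{p,d,\mathrm{ir}}$ is an open subset, so $X$ is irreducible. Suppose I can produce a representative $L_0(x)\in M^{\Delta_{\ue}}_{p,d}$ with $\tilde{\chi}_{\ue}([L_0])=Q_0\in V_{p,d',\spl}'$. Then $P_0:=Q_0(x^p,y)\in V_{p,d,\sm}$ is irreducible, so $[L_0]\in X$ and the fibre $\tilde{\chi}_{\ue}^{-1}(Q_0)$ is nonempty; by Theorem \ref{thm:each-conn-comp-jac} each of its components is an affine open of $J(C_{Q_0})$, so this fibre has dimension $g_Q$. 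Upper semicontinuity of fibre dimension then forces the generic fibre of $\tilde{\chi}_{\ue}$ to have dimension $\le g_Q$, whence $\dim\overline{\tilde{\chi}_{\ue}(X)}\ge \dim X-g_Q=s'$, i.e. $\tilde{\chi}_{\ue}$ is dominant and the lemma follows.

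The hard part will be exhibiting this single good point $L_0$, that is, an element of $M^{\Delta_{\ue}}_{p,d}$ whose spectral curve is smooth and split — note that a priori Theorem \ref{thm:each-conn-comp-jac} only describes the components of $\tilde{\chi}_{\ue}^{-1}(Q)$ without guaranteeing any exist (indeed the Remark after it flags the distribution of components among $\ue\in\cE$ as open, so I would avoid relying on it). Here I would use that $M^{\Delta_{\ue}}_{p,d}$ is flexible: for every pair $(i,j)$ the entry $\ell_{ij}(x)$ may carry the monomial $x^{[e_i-e_j]}$ with $[e_i-e_j]\le p-1<d$, so all off-diagonal positions are available. Concretely, for $\ue=\uo$ I would take a periodic-Toda-type Lax matrix $L(x)=x\Lambda+\diag(c_1(x^p),\dots,c_p(x^p))$, with $\Lambda$ the cyclic shift (so $x\Lambda\in M^{\Delta_{\uo}}$ contributes the factor $y^p-x^p$ to the characteristic polynomial) and the $c_i$ chosen generically, and for general $\ue$ an analogous block-cyclic element; the one routine but essential verification is that the entries can be tuned so that the resulting $Q_0=\tilde{\chi}_{\ue}(L)$ lands in $V_{p,d',\spl}'$, i.e. has a smooth spectral curve unramified at $\{0,\infty\}$ with $Q_0(0,y)$ and $Q_0(\infty,y)$ separable.
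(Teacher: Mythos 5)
Your proposal has the same skeleton as the paper's proof: the reduction through the diagram \eqref{eq:diag-ue} to dominance of $\tilde{\chi}_{\ue}$, and the dimension count (your $\dim X = p^2d'+1 = s'+g_Q$ is the paper's \eqref{eq:dim-chi}--\eqref{eq:dim-M}, stated upstairs rather than on the quotient). Where you diverge is the dominance step itself. The paper reads dominance directly off Theorem \ref{thm:each-conn-comp-jac} (``In particular, $\chi_{\ue}$ is dominant''), i.e.\ it implicitly treats the generic fibers as nonempty; you correctly observe that the theorem is vacuous on empty fibers, and that the Remark following it leaves open exactly which $\ue \in \cE$ are hit by the component map $\pi_0 \to \cE$. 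Your repair --- produce a single point $[L_0]$ over some $Q_0 \in V_{p,d',\spl}'$ and invoke the fiber-dimension theorem on the irreducible variety $X$ --- is logically sound, and it targets a real issue (one that the paper's own wording does not address).

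The gap is that this repair, which now carries the entire content of the lemma, is not carried out. For $\ue=\uo$ your Toda-type matrix $x\Lambda+\diag(c_1(x^p),\dots,c_p(x^p))$ is a workable choice, but the verification is more than bookkeeping; it can be completed, e.g.\ by first taking $c_i(X)=a_iX^{d'}$ with distinct $a_i$, checking by hand that both charts of $C_Q$, $Q=\prod_i(y-c_i(X))-X$, are smooth and that $Q(\infty,y)=\prod_i(y-a_i)$ is separable, and then noting that smoothness, separability at $0$ and separability at $\infty$ are each nonempty open conditions on the irreducible parameter space of $(c_1,\dots,c_p)$, hence simultaneously realizable. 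The serious problem is general $\ue$: ``an analogous block-cyclic element'' is not a construction. By Lemma \ref{lem:mdelta} the shape of $M^{\Delta_{\ue}}$ is dictated by the multiplicities of the entries of $\ue$, and the graded piece $D_{\ue,1}$ can vanish outright (e.g.\ $p=5$, $\ue$ equivalent to $(0,0,2,2,2)$, where $e_i-e_j$ only takes the values $0,2,3$), so there is no analogue of $x\Lambda$ to build on; one is forced to work with genuinely block-shaped matrices, and showing their characteristic polynomials reach $V_{p,d',\spl}'$ is exactly the surjectivity of $\pi_0\to\cE$ that the paper flags as an open question. A plausible alternative route is to extract this surjectivity from the explicit description of the cokernel in \S \ref{sect:isom-exp}: the class $\ue$ of $g_L$ is read off from the linearization eigenvalues at the $p$ fixed points of $\tau$ over $x=0$, and the branch points contribute essentially independently to the component group. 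But some such argument must actually be supplied; as written, your proof is complete only for $\uz$ (where it is Beauville's theorem) and, modulo the verification above, for $\uo$.
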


\begin{proof}
Due to Theorem \ref{thm:each-conn-comp-jac}, 
for $Q \in V_{p,d',\mathrm{spl}}$ 
the space $\chi^{-1}_{\ue}(Q) / G_{\Delta_{\ue}}$ is isomorphic to 
the disjoint union of affine open subsets of $J(C_Q)$.
In particular, $\chi_{\ue}$ is dominant
and we have
\begin{align}\label{eq:dim-chi}
\dim \chi^{-1}_{\ue}(Q) = g_Q + \dim G_{\Delta_{\ue}}
= \frac{1}{2}(p-1)(pd'-2) + |T_0| -1
\end{align}
(see \eqref{eq:def-t0} for the definition of $T_0$).
We also have
\begin{align}
  \label{eq:dim-V}
  &\dim V_{p,d'} = \sum_{k=1}^p (d'k+1) = \frac{1}{2} d'p(p+1) + p
  \\
  \label{eq:dim-M}
  &\dim M_{p, d, \ir}^{\Delta_{\ue}} = d'p^2 + |T_0|\;. 
\end{align}
From \eqref{eq:dim-chi}--\eqref{eq:dim-M} we obtain 
$\dim M_{p, d, \ir}^{\Delta_{\ue}} = \dim \chi^{-1}_{\ue}(Q) + \dim V_{p,d'}$,
and hence
$\dim M_{p, d, \ir}^{\Delta_{\ue}} = \dim \overline{H}^{-1}_{\ue}(m) + s'$
for generic $m \in \C^{s'}$ by \eqref{eq:diag-ue}.
Thus, since $\overline{H}_{\ue}$ is dominant, 
$H_{i,pj}$ are algebraically independent. 
\end{proof}

\begin{prop}\label{prop:vflinear-quo}
Fix $Q(x,y) \in V'_{p,d',\mathrm{spl}}$ and set $P(x,y) := Q(x^p,y) \in V_{p,d,\mathrm{spl}}$.  We write
$\widetilde{Y}_{i,j}$ for the Hamiltonian vector field on $M_{p,d,P}/PGL_p(\C)$, which is induced by $\diff/\diff
t_{i,j}$ on $M_{p,d}$.  Then every translation invariant vector field on $J(C_P)^\tau$ is a linear combination of
$\widetilde{Y}_{i,pj}$'s.
\end{prop}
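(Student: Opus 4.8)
The plan is to deduce the statement entirely from the algebraic complete integrability of Beauville's system together with a $\tau$-eigenspace decomposition, so that no new transcendental input is needed; the geometric content is that the flows $\widetilde{Y}_{i,j}$ already span \emph{all} translation invariant vector fields on $J(C_P)$, and that the $\tau$-invariant ones among them are exactly the $\widetilde{Y}_{i,pj}$. First I would record the $\tau$-equivariance. Since $P=Q(x^p,y)$, the automorphism $(x,y)\mapsto(\zeta x,y)$ of $C_P$ generates the Galois group $T=\Gal(\bbC(C_P)/\bbC(C_Q))$, and the isomorphism of Theorem~\ref{thm:bea} intertwines $\tau$ on $M_{p,d,P}/PGL_p(\bbC)$ with the induced automorphism $\tau^*$ on $\Pic^{g_P-1}(C_P)$; this follows from the functorial push-forward description of Beauville's map recalled after Theorem~\ref{thm:bea}, as both maps are induced by the \emph{same} curve automorphism. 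Consequently the space $W$ of translation invariant vector fields on $J(C_P)^\tau$ is canonically the $\tau_*$-fixed subspace $(T_0 J(C_P))^{\tau_*}$ of the $g_P$-dimensional space $T_0 J(C_P)$ of all translation invariant vector fields on $J(C_P)$, and by Theorem~\ref{thm:each-conn-comp-jac} (via Theorem~\ref{thm:conn-comp}, Proposition~\ref{prop:inj} and \eqref{eq:no-branch-pt}) we have $\dim_{\bbC} W=g_Q$.

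Next I would compute how $\tau_*$ acts on the $\widetilde{Y}_{i,j}$. From the residue formula \eqref{eq:Hamiltonian} one gets $\tau^* H_{i,j}=\zeta^{j} H_{i,j}$, so each $H_{i,j}$ is a $\tau$-eigenfunction with eigenvalue $\zeta^{j}$. Since $\tau=\sigma_{\Delta_{\uz}}$ is a Poisson automorphism of $(M,\PB_1)$ by Proposition~\ref{prop:reduced-PB} (as $1\equiv 1\pmod p$), and this descends to the Beauville bracket $\PB_1^B$, a short computation using the defining relation $\tau_*X_H=X_{H\circ\tau^{-1}}$ gives $\tau_*\widetilde{Y}_{i,j}=\zeta^{-j}\widetilde{Y}_{i,j}$. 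Thus each $\widetilde{Y}_{i,j}$ is an eigenvector of the order-$p$ operator $\tau_*$, and it is $\tau_*$-fixed precisely when $p\mid j$; that is, the $\tau_*$-fixed members of the family are exactly the $\widetilde{Y}_{i,pj}$, which therefore lie in $W$.

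Finally I would invoke the spanning property and an elementary eigenspace argument. By Theorem~\ref{th:Beauville} the Beauville system is algebraically completely integrable with generic fiber $J(C_P)$ of dimension $g_P$; hence the $g_P$ independent non-Casimir Hamiltonian vector fields among the $\widetilde{Y}_{i,j}$ are pointwise independent on the generic fiber and, being translation invariant, form a basis of $T_0 J(C_P)$. In particular the whole family $\{\widetilde{Y}_{i,j}\}$ spans $T_0 J(C_P)$. Because $\tau_*$ has order $p$ it acts semisimply, so $T_0 J(C_P)$ is the direct sum of its $\tau_*$-eigenspaces, and a spanning set consisting of eigenvectors must span each eigenspace separately (project onto the chosen eigenspace). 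Applying this to the eigenvalue $1$ shows that the $\widetilde{Y}_{i,pj}$ span $W=(T_0 J(C_P))^{\tau_*}$, which is the assertion.

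I expect the genuinely delicate step to be the spanning claim of the last paragraph, namely that the non-Casimir flows fill up the tangent space of the generic Lagrangian fiber. I would justify it by the dimension bookkeeping already assembled, transported to Beauville's original system (the independence in Lemma~\ref{lem:H-ind} and the identity $\dim M_{p,d,\ir}/PGL_p(\bbC)=g_P+\dim V_{p,d}$): functional independence of the $g_P$ non-Casimir Hamiltonians forces their Hamiltonian vector fields to be linearly independent at a generic point of the fiber, and translation invariance upgrades this to a global basis of $T_0 J(C_P)$. A secondary point requiring care is the $\tau$-equivariance of Beauville's isomorphism used at the outset, which I would verify directly from the sheaf-theoretic construction, since $\tau$ on $M_{p,d,P}/PGL_p(\bbC)$ and $\tau^*$ on $\Pic^{g_P-1}(C_P)$ are induced by one and the same automorphism of $C_P$.
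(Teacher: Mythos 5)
Your proof is correct, but it takes a genuinely different route from the paper's. The paper's own argument is a dimension count: it restricts attention to the specific subfamily $Y_{i,pj}$ with $1\leqs i\leqs p-1$, $1\leqs j\leqs id'-1$, obtains tangency to $J(C_P)^\tau$ from Theorem \ref{thm:each-conn-comp-jac} together with Proposition \ref{prop:hvf-quo}\,(1), obtains linear independence of these fields from Theorem \ref{th:Beauville}, and then closes the argument via the numerical coincidence $\sum_{i=1}^{p-1}(id'-1)=\tfrac12(p-1)(pd'-2)=g_Q=\dim J(C_P)^\tau$. You instead decompose $T_0 J(C_P)$ into $\tau_*$-eigenspaces: the identity $\tau_*\widetilde{Y}_{i,j}=\zeta^{-j}\widetilde{Y}_{i,j}$ (which indeed follows from Proposition \ref{prop:reduced-PB} with $\ue=\uz$, $\mu=1$, and can even be read off directly from the Lax form \eqref{eq:vfs_on_loop}, since replacing $x$ by $\zeta x$ multiplies $\left(L^i(x)/x^j\right)_+$ by $\zeta^{-j}$), the spanning of $T_0 J(C_P)$ by the full family (standard Liouville theory plus Theorem \ref{th:Beauville}), and projection onto the eigenvalue-one eigenspace. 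What your approach buys: no counting identity is needed, tangency of the $\widetilde{Y}_{i,pj}$ to $J(C_P)^\tau$ comes for free from the eigenvector property rather than from the quotient Hamiltonian structure, and the structural reason for the statement --- equivariance --- is made explicit. What the paper's approach buys: it is shorter given the machinery already assembled, and it avoids having to argue the two points you yourself flag as delicate, namely the spanning of all of $T_0 J(C_P)$ and the $\tau$-equivariance of Beauville's isomorphism. Note, however, that this equivariance is already used implicitly by the paper in the proof of Theorem \ref{thm:each-conn-comp-jac}, where $M'_{p,d,P}/PGL_p(\C)$ is identified with the complement of $\Theta_P$ in $\Pic^{g_P-1}(C_P)^\tau$ via Theorem \ref{thm:bea}; so your explicit verification addresses a point common to both arguments rather than an extra burden created by your method.
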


\begin{proof}
It follows from Theorem \ref{th:Beauville} that $\widetilde{Y}_{i,j}$ is for 
$i,j \geq 0$ a translation invariant
vector field on $J(C_P)$.  From Theorem \ref{thm:each-conn-comp-jac} and Prop.~\ref{prop:hvf-quo} (1), we see that
$\widetilde{Y}_{i,pj}|_{J(C_P)^\tau}=Y_{i,pj}$ and 
it is tangent to $J(C_P)^\tau$.  
By Prop.~\ref{eq:multi-h-on-loop} (2) and Theorem ~\ref{th:Beauville},
the vector fields $Y_{i,pj} ~(1 \leq i \leq p-1,~ 1 \leq j \leq id'-1)$ 
turn out to be linearly independent.  
The number of such vector fields $\sum_{i=1}^{p-1} (id'-1)$ agrees with $g_Q$.  Thus the claim follows.
\end{proof}

\begin{proof}[Proof of Theorem~\ref{thm:main}]
First we calculate the rank $2r$ of 
the Poisson manifold $(M_{p, d, \ir}^{\Delta_{\ue}}/G_{\Delta_{\ue}}, 
\PB^{\mathrm{quo}}_{1+pn})$.
Let $d(\Cas)$
be the number of independent Casimirs with respect to 
$\PB^{\mathrm{quo}}_{1+pn}$.
For $Q \in V_{p,d',\mathrm{spl}}$, we have 
$$
  2 r + d(\Cas)
  \leqs \dim M_{p, d, \ir}^{\Delta_{\ue}}/G_{\Delta_{\ue}} 
  = g_Q + s'\;, 
$$
while Prop.~\ref{prop:hvf-quo} (ii) and Lemma~\ref{lem:H-ind} show $d(\Cas) \geq (d'+2)p-1$.  We obtain $g_Q \geq
r$.  On the other hand, we have $g_Q \leq r$ because the Hamiltonian vector fields span the $g_Q$-dimensional
tangent space of $\chi_{\ue}^{-1}(Q)$ for $Q \in V_{p,d',\mathrm{spl}}$.  Therefore we get $r=g_Q$.

Now it follows from Lemma~\ref{lem:H-ind} that $(M_{p, d, \ir}^{\Delta_{\ue}}/G_{\Delta_{\ue}},
\PB^{\mathrm{quo}}_{1+pn},\overline{H}_{\ue})$ is a Liouville integrable system.  Finally, the algebraic completely
integrability follows from Theorem \ref{thm:each-conn-comp-jac} and Prop.~\ref{prop:vflinear-quo}.
\end{proof}

\section{Appendix: Pull-back on Jacobian variety}

The aim of this appendix is to prove the following theorem.

\begin{thm}\label{thm:inj-jac}
Let $C$ and $C'$ be smooth projective irreducible curves over $\bbC$, 
and let $f: C \to C'$ be a finite morphism.
Suppose that the corresponding extension $\bbC(C)/\bbC(C')$ 
of function fields is a Galois extension of prime degree $p$. 
%
%
Then the pull-back $f^* : J(C') \to J(C)$ is not injective 
if and only if $f$ is \'etale (that is, unramified everywhere). 
If this is the case, $\Ker(f^*)$ is a cyclic group of order $p$.
\end{thm}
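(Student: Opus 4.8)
The plan is to run the snake lemma on exactly the commutative diagram of \S\ref{sect:pf-conn-comp}, but this time \emph{without} invoking conditions (1) or (2) of Theorem~\ref{thm:conn-comp}. By the first part of Lemma~\ref{lem:pic} the object of interest satisfies $\Ker(f^*\colon J(C')\to J(C))\cong\Ker(f^*\colon\Pic(C')\to\Pic(C))=\Ker(\gamma)$, where $\gamma$ is the right-hand vertical map of that diagram. First I would check that the other two vertical maps are injective: $\alpha$ is injective because $\bbC(C')^*=(\bbC(C)^*)^T$ and a function that becomes constant in $\bbC(C)^*/\bbC^*$ is already constant, while $\beta=f^*$ is injective because it sends the free generators $x'$ of $\Div(C')$ to divisors with pairwise disjoint supports. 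Since the top row is short exact and the bottom row is left exact, the snake lemma then produces an exact sequence
\begin{equation*}
  0\to\Ker(\gamma)\to\Coker(\alpha)\to\Coker(\beta),
\end{equation*}
and Lemma~\ref{lem:gal-3} together with Lemma~\ref{lem:div} identify $\Coker(\alpha)\cong\mu_p\cong\Z/p\Z$ and $\Coker(\beta)\cong(\Z/p\Z)^N$. Thus $\Ker(f^*)$ is the kernel of a map $\Z/p\Z\to(\Z/p\Z)^N$, and the whole theorem reduces to deciding when this map is injective.

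The main step is to compute that map explicitly, which is the content of \S\ref{sect:isom-exp}. By Kummer theory (legitimate since $\bbC\supset\mu_p$) I would choose $g\in\bbC(C)$ generating $\bbC(C)/\bbC(C')$ with $g^p=f^*g'$ for some $g'\in\bbC(C')^*$, and decompose $\div(g')=D_1+pD_2$ with $D_1$ supported on the branch locus $B$ and $D_2$ on $C'\setminus B$. For a suitable generator $\tau$ of $T$ one has $\tau(g)/g=\zeta$, so the class of $g$ generates $\Coker(\alpha)\cong\mu_p$. From $p\,\div(g)=\div(g^p)=f^*\div(g')=f^*D_1+p\,f^*D_2$, and since each branch point is totally ramified so that $f^*D_1$ is divisible by $p$, one gets $\div(g)=\tfrac1p f^*D_1+f^*D_2$; hence the image of the generator in $\Coker(\beta)=\Div(C)^T/f^*\Div(C')$ is represented by $\tfrac1p f^*D_1$, which under the identification of Lemma~\ref{lem:div} is precisely $D_1\bmod p$. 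In other words the map is $n\mapsto nD_1\bmod p$.

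The crucial input, and the place where the ramification hypothesis enters, is the Kummer criterion for ramification: the branch points are exactly the $x'$ with $v_{x'}(g')\not\equiv0\pmod p$, so every coefficient of $D_1$ is prime to $p$. I expect this to be the only delicate point, the application of the snake lemma and the cohomology identifications being routine. Granting it, the conclusion is immediate. If $f$ is \'etale then $N=0$, the target $(\Z/p\Z)^N$ is trivial, and the exact sequence gives $\Ker(f^*)\cong\Coker(\alpha)\cong\mu_p$, a cyclic group of order $p$; in particular $f^*$ is not injective. If $f$ is ramified then $N>0$ and $D_1\bmod p$ is nonzero (indeed nonzero in every coordinate), so $n\mapsto nD_1\bmod p$ is injective, whence $\Ker(\gamma)=0$ and $f^*$ is injective. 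This proves the stated equivalence and the structure of the kernel.
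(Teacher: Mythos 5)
Your proof is correct, and it takes a genuinely different route from the paper's. The paper proves both directions with \'etale cohomology: when $f$ is \'etale, the Hochschild--Serre spectral sequence $H^i(T,H^j_\et(C,\G_m))\Rightarrow H^{i+j}_\et(C',\G_m)$ yields an exact sequence $0\to H^1(T,\bbC^*)\to\Pic(C')\to\Pic(C)^T$, whence $\Ker(f^*)\cong\mu_p$ by Lemmas \ref{lem:gal-1} and \ref{lem:pic}; conversely, the Kummer sequence identifies $\Ker(f^*)\subset J(C')[p]$ with $\Hom\bigl(\pi_1(C')/f_*(\pi_1(C)),\mu_p\bigr)$, so a nonzero kernel produces a nontrivial \'etale subcovering, and prime degree then forces $f$ to be \'etale. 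You instead rerun the snake-lemma diagram of \S\ref{sect:pf-conn-comp} unconditionally (legitimate, since Lemmas \ref{lem:gal-3}, \ref{lem:pic} and \ref{lem:div} assume neither hypothesis (1) nor (2) of Theorem \ref{thm:conn-comp}), obtaining $0\to\Ker(f^*)\to\Z/p\Z\to(\Z/p\Z)^N$, and you identify the second map as $n\mapsto nD_1\bmod p$ exactly as in \S\ref{sect:isom-exp}. The one ingredient beyond what the paper records is the local Kummer criterion --- $x'\in\br$ if and only if $v_{x'}(g')\not\equiv 0\pmod p$ --- which holds because every unit of $\bbC[[t]]$ is a $p$-th power, so the local extension is split when $p\mid v_{x'}(g')$ and totally ramified otherwise; this makes every coefficient of $D_1$ prime to $p$, so the map is injective precisely when $N>0$, giving both directions of the theorem at once together with the structure of the kernel when $N=0$. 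Your intermediate checks (injectivity of $\alpha$ and $\beta$, that the class of $g$ generates $\Coker(\alpha)\cong\mu_p$ via $[h]\mapsto\tau(h)/h$, and total ramification at branch points permitting $\div(g)=\frac{1}{p}f^*D_1+f^*D_2$) are all valid. Notably, your argument shows that the \'etale cohomology which the authors, in the remark following Theorem \ref{thm:conn-comp}, say their proof of this equivalence requires, can be avoided altogether; what the paper's approach buys in exchange is a shorter argument (granted the machinery) and the conceptual description of $\Ker(f^*)$ as the character group of $\pi_1(C')/f_*(\pi_1(C))$, which makes the link between non-injectivity and \'etale coverings transparent.
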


\begin{proof}
%
In the rest of this section, we use the conventions introduced in \ref{sect:convention}.  Let $T :=
\Gal(\C(C)/\C(C'))$ and $F:=\Ker(f^* : J(C') \to J(C))$.

First we assume $f$ is \'etale and we prove that $F$ is a cyclic group of order $p$.  Since $f$ is \'etale, we have
the Hochschild-Serre spectral sequence
$$ 
  E_2^{ij}=H^i(T, H^j_\et(C, \G_m)) \Rightarrow H^{i+j}_\et(C', \G_m)\;. 
$$
(Here and in the sequel $H_\et^*(-, -)$ denotes \'etale cohomology, while $H^*(T, -)$ denotes Galois cohomology.
We denote by $\G_m$ the \'etale sheaf represented by
the multiplicative group $\G_m$.)
  Recall that $H^1_\et(X, \G_m) = \Pic(X)$ for any scheme $X$.  Thus we get an exact sequence
$$ 0 \to H^1(T, \bbC^*) \to \Pic(C') \overset{f^*}{\to} \Pic(C)^T\;. $$
(Here we used $H^0_\et(C, \G_m)=\bbC^*$, which is a consequence of the assumption that $C$ is projective over
$\bbC$.)  We obtain $F \cong \mu_p$ by Lemmas \ref{lem:gal-1} and \ref{lem:pic}.

Next we assume $F \not= 0$ and show that $f$ is \'etale.  We write $J(C)[p]$ (resp. $J(C')[p]$) for the group of
$p$-torsion elements in $J(C)$ (resp. $J(C')$).  Since $f$ is of degree $p$, we have $F \subset J(C')[p]$.  (This
can be seen by $f_* \circ f^*(a)=pa$ for any $a \in J(C')$, where $f_* : J(C) \to J(C')$ denotes the the
push-forward map.)  On the other hand, we have two horizontal isomorphisms in a commutative diagram
$$
\begin{matrix}
H^1_\et(C', \mu_p) & \cong & J(C')[p]
\\[1mm]
{}^{f^*} \downarrow \quad
& &
\downarrow^{f^*}
\\
H^1_\et(C, \mu_p) & \cong & J(C)[p]
\end{matrix}
$$
arising from the short exact sequence
$0 \to \mu_p \to \G_m \overset{p}{\to} \G_m \to 0$.
Recall that 
$H^1(X, \mu_p) \cong \Hom(\pi_1(X), \mu_p)$
for any connected variety $X$ over $\C$.
Therefore we get
$$ 
  F \cong \Hom(\pi_1(C')/f_*(\pi_1(C)), \mu_p)\;. 
$$
Now the assumption $F \not= 0$ implies that $f$ has a non-trivial \'etale subcovering, but $f$ is of prime degree
$p$ and hence $f$ must be \'etale.
\end{proof}

\def\cprime{$'$}

\end{document}

